\documentclass[a4paper,12pt,intlimits,oneside]{amsart}

\usepackage{amsmath}
\usepackage{amssymb}
\usepackage{amsfonts}
\usepackage{amsthm,amscd}

\theoremstyle{definition}
\newtheorem{thm}{Theorem}[section]
\newtheorem{prop}[thm]{Proposition}
\newtheorem{cor}[thm]{Corollary}
\newtheorem{lem}[thm]{Lemma}
\newtheorem{defn}[thm]{Definition}
\newtheorem{remark}[thm]{Remark}

\def\E{{\mathfrak E}}

\newcommand{\comment}[1]{}

\numberwithin{equation}{section}

\theoremstyle{definition}

\begin{document}
\title []{ Norm inequalities for Dunkl-type fractional integral and fractional maximal operators in the Dunkl-Fofana spaces}
\author[P. Nagacy]{Pokou Nagacy}
\address{Laboratoire de Math\'ematiques et Applications, UFR Math\'ematiques et Informatique, Universit\'e F\'elix Houphou\"et-Boigny Abidjan-Cocody, 22 B.P 582 Abidjan 22. C\^ote d'Ivoire}
\email{{ pokounagacy@yahoo.com}}
\author[J. Feuto]{Justin Feuto}
\address{Laboratoire de Math\'ematiques et Applications, UFR Math\'ematiques et Informatique, Universit\'e F\'elix Houphou\"et-Boigny Abidjan-Cocody, 22 B.P 1194 Abidjan 22. C\^ote d'Ivoire}
\email{{ justfeuto@yahoo.fr}}
\author[B. A. KPATA]{B\'erenger Akon KPATA}
\address{Laboratoire de Mathématiques et Informatique, UFR des Sciences Fondamentales et
Appliquées,
 Université Nangui Abrogoua,
 02 BP 801 Abidjan 02, Côte d'Ivoire}
\email{{ kpata\_akon@yahoo.fr}}

 \renewcommand{\thefootnote}{}

\footnote{2010 \emph{Mathematics Subject Classification}: 42B25, 42B20, 42B35.}

\footnote{\emph{Key words and phrases}: Dunkl-Fofana spaces, Dunkl-Wiener amalgam spaces, Dunkl-type fractional integral
operator, Dunkl-type fractional maximal operator.}

\renewcommand{\thefootnote}{\arabic{footnote}}
\setcounter{footnote}{0}

\begin{abstract}
We establish some new properties of the Dunkl-Wiener amalgam spaces defined on the real line. These results allow us to obtain the boundedness of Dunkl-type fractional integral and fractional maximal operators in the Dunkl-Fofana spaces.
\end{abstract}

\maketitle

\section{Introduction}
Let us start with some notations that will be used throughout this paper. Let $ k > -\frac{1}{2}$ be a fixed number and $\mu$ be the weighted Lebesgue measure on $\mathbb{R}$, given by
$$d\mu(x)=\left(2^{k+1}\Gamma(k+1)\right)^{-1}\left|x\right|^{2k+1}dx. $$
For $1\leq p\leq\infty$, we denote by $ L^{p}(\mu)$ the Lebesgue space associated with the measure $\mu$, while $L^{p,\, \infty}(\mu)$ is the weak $ L^{p}(\mu)$-space.  We denote by $L^{0}(\mu)$ the complex vector space of equivalence classes (modulo equality $\mu$-almost everywhere) of complex-valued functions $\mu$-measurable  on $\mathbb R$. For $f\in L^{p}(\mu)$, $\Vert f\Vert_p$ stands for the classical norm of $f$.
For any subset $A$ of $\mathbb{R}$, $\chi_{A}$ denotes the characteristic function of $A$. For $x \in \mathbb{R}$ and for $r>0$, we set
$$ B(x,\, r)=\{ y \in \mathbb{R} \, : \, \max \{0,\, |x|-r\}<|y|<|x|+r \}$$  if  $x\neq 0 $ and
$$ B_{r}=B(0,\, r)=(-r, \, r).$$
The letter $C$ will be used for non-negative constants not depending on the relevant variables, and this constant may change from one occurrence to another.

The study of the boundedness properties of certain integral type operators in the topological vector spaces where they act is an important problem in harmonic analysis. Many useful results concerning this topic, which were established in classical Fourier analysis have been generalized within the framework of Dunkl analysis (see, for example \cite{De}, \cite{GM}, \cite{GM3}, \cite{M. G.}, \cite{M-H-S}, and the references therein). Recently, P. Nagacy and J. Feuto \cite{NF} introduced the Dunkl-Wiener amalgam spaces and the Dunkl-Fofana spaces on the real line as described below.
As an application, they established the boundedness of the Hardy-Littlewood maximal operator associated with the Dunkl operator in the the Dunkl-Fofana spaces. Recall that on the real line, the Dunkl operators are differential-difference operators associated with the reflection group $\mathbb{Z}_{2}$ on $\mathbb{R}$.\\
For $1\leq q, \, p\leq \infty$, the Dunkl-Wiener amalgam space $(L^q, \, L^p)(\mu)$ is defined as the subspace of $L^{0}(\mu)$  such that $ \left\|f\right\|_{q,p}=  _{1}\left\|f\right\|_{q,p}<\infty$, where for $r>0$,
 \begin{equation}
 \ _{r}\left\|f\right\|_{q,p}=\left\{\begin{array}{lll}\left\|\left[\int_{\mathbb R}(\tau_{y}\left|f\right|^{q})\chi_{B_r}(x)d\mu(x)\right]^{\frac{1}{q}}\right\|_p&\text{ if }&q<\infty ,\\
 \left\|\left\|f\chi_{B(y,r)}\right\|_{\infty}\right\|_p&\text{ if }&q=\infty,
 \end{array}\right.
 \end{equation}
 with the $L^p(\mu)$-norm taken with respect to the variable $y$. Here and in the sequel, $\tau_{y} (y \in \mathbb{R})$ stands for the Dunkl translation operator (see Section 2 for more details).\\
 The space $(L^q, \, L^p)(\mu)$ is a complex Banach space endowed with the norm $\left\| \cdot \right\|_{q,p} $.\\
Let $1\leq q\leq\alpha\leq p\leq\infty$. The space $\left(L^{q},L^{p}\right)^{\alpha}(\mu)$ is defined as the subspace of $L^{0}(\mu)$  such that $ \left\|f\right\|_{q,p,\alpha}<\infty$, where
\begin{equation*}
\left\|f\right\|_{q,p,\alpha}=\sup_{r>0}(\mu(B_r))^{\frac{1}{\alpha}-\frac{1}{q}-\frac{1}{p}}\ _{r}\left\|f\right\|_{q,p}.
\end{equation*}
It is proved in \cite{NF} that the map $f \mapsto \left\|f\right\|_{q,p,\alpha}$ is a norm on $\left(L^{q},L^{p}\right)^{\alpha}(\mu)$. In addition, For $\alpha \in \{p,\, q \}$, $ \left(L^{q},L^{p}\right)^{\alpha}(\mu)=L^{\alpha} (\mu)$.

In this paper we obtain several new properties of the spaces $(L^q, \, L^p)(\mu)$ by examining their relationship with other topological vector spaces. We also define an equivalent norm to $ \| \cdot \|_{q,\, p}$. Furthermore, we show that $(\left(L^{q},L^{p}\right)^{\alpha}(\mu),\,  \left\|\cdot\right\|_{q,p,\alpha})$ is a complex Banach space. These results allow us to establish norm inequalities in the spaces $\left(L^{q},L^{p}\right)^{\alpha}(\mu)$ for the Dunkl-type fractional integral operator, defined by
\begin{eqnarray*}
I_{\beta} f(x) =  \displaystyle \int_{\mathbb{R}} \tau_{x} f (z) \vert z \vert^{\beta - (2k+2)} d\mu (z),
\end{eqnarray*}
and the fractional maximal operator
$$M_{\beta} f(x)=\sup_{r>0}(\mu(B(0,r))^{\frac{\beta}{2k+2}-1}\int_{B(0,r)}\tau_x\left|f\right|(y)d\mu(y),$$
where $0< \beta < 2k+2 $.\\
In the limiting case $\beta = 0$, the fractional maximal operator reduces to the Hardy-Littlewood maximal operator associated with the Dunkl operator, denoted by $M$.\\
Our main results read as follows.
\begin{thm} \label{theo 01}
Let  $1 < q \leq \alpha \leq p < \infty$ and $0<\beta< \frac{2k+2}{\alpha}$.  Put
$$ \frac{1}{\alpha^{*}} = \frac{1}{\alpha}-\frac{\beta}{2k+2}, \frac{1}{\bar{p}} = \frac{1}{p}(1-\frac{\alpha\beta}{2k+2}) \text{ and } \frac{1}{\bar{q}} = \frac{1}{q}(1-\frac{\alpha\beta}{2k+2}).$$
 Then
\begin{equation*}
\Vert I_{\beta}f \Vert_{\bar{q},\bar{p},\alpha^{*}} \leq C \Vert f \Vert^{1-\frac{\alpha\beta}{2k+2}}_{q,p,\alpha} \Vert f \Vert^{\frac{\alpha\beta}{2k+2}}_{q,\infty,\alpha},\;\;\;\;\;\;f \in (L^{q}, L^{p})^{\alpha} (\mu)
\end{equation*}
 and
 \begin{equation*}
\Vert I_{\beta}f \Vert_{\bar{q},\bar{p},\alpha^{*}} \leq C \Vert f \Vert_{q,p,\alpha},\;\;\;\;\;\; f \in (L^{q}, L^{p})^{\alpha} (\mu).
\end{equation*}
\end{thm}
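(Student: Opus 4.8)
The plan is to reduce both inequalities to a pointwise Hedberg-type estimate and then to invoke the boundedness of the Hardy--Littlewood maximal operator $M$ on $(L^q,L^p)^\alpha(\mu)$ established in \cite{NF}. Set $\theta=\frac{\alpha\beta}{2k+2}$, so that $0<\theta<1$ by the assumption $\beta<\frac{2k+2}{\alpha}$, and observe that the exponents are normalised exactly so that $\frac{1}{\bar q}=\frac{1-\theta}{q}$, $\frac{1}{\bar p}=\frac{1-\theta}{p}$ and $\frac{1}{\alpha^{*}}=\frac{1-\theta}{\alpha}$. First I would fix $x\in\mathbb R$ and $\delta>0$ and split $|I_{\beta}f(x)|\le \int_{|z|<\delta}\tau_{x}|f|(z)|z|^{\beta-(2k+2)}d\mu(z)+\int_{|z|\ge\delta}\tau_{x}|f|(z)|z|^{\beta-(2k+2)}d\mu(z)$, using that $\tau_x$ is positive. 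Here one may assume $\|f\|_{q,\infty,\alpha}<\infty$ and $Mf(x)<\infty$ for a.e. $x$, the latter being guaranteed by the boundedness of $M$.

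For the local part, decomposing $\{|z|<\delta\}$ into the dyadic annuli $2^{-j-1}\delta\le|z|<2^{-j}\delta$, bounding $|z|^{\beta-(2k+2)}$ by its largest value on each annulus, and using $\int_{B(0,r)}\tau_{x}|f|\,d\mu\le \mu(B_{r})Mf(x)$ together with $\mu(B_{r})=Cr^{2k+2}$, I obtain a geometric series of ratio $2^{-\beta}$; since $\beta>0$ it sums to give $\int_{|z|<\delta}\tau_{x}|f|(z)|z|^{\beta-(2k+2)}d\mu(z)\le C\delta^{\beta}Mf(x)$.

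For the tail I would again decompose dyadically into $2^{j}\delta\le|z|<2^{j+1}\delta$. On each annulus I first apply Jensen's inequality for the Dunkl translation (which is positive and satisfies $\tau_{x}1=1$, see Section 2), giving $\tau_{x}|f|\le(\tau_{x}(|f|^{q}))^{1/q}$, then Hölder in $z$ over $B(0,2^{j+1}\delta)$ with exponents $q,q'$, which yields $\int_{B(0,R)}\tau_{x}|f|\,d\mu\le (\mu(B_{R}))^{1/q'}\big(\int_{B_{R}}\tau_{x}(|f|^{q})\,d\mu\big)^{1/q}$. By the very definition of $\ _{R}\|f\|_{q,\infty}$ the last factor is at most $\ _{R}\|f\|_{q,\infty}\le(\mu(B_{R}))^{\frac1q-\frac1\alpha}\|f\|_{q,\infty,\alpha}$, whence $\int_{B(0,R)}\tau_{x}|f|\,d\mu\le C R^{(2k+2)(1-\frac1\alpha)}\|f\|_{q,\infty,\alpha}$. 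Inserting $R=2^{j+1}\delta$, the $j$-th term carries the factor $(2^{j}\delta)^{\beta-(2k+2)/\alpha}$, and the series converges precisely because $\beta<\frac{2k+2}{\alpha}$, giving $\int_{|z|\ge\delta}\tau_{x}|f|(z)|z|^{\beta-(2k+2)}d\mu(z)\le C\delta^{\beta-(2k+2)/\alpha}\|f\|_{q,\infty,\alpha}$. This tail estimate, in which the Fofana $\alpha$-norm enters, is the step I expect to be the main obstacle, chiefly in justifying the Jensen and Hölder manipulations through the Dunkl translation.

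Adding the two bounds gives $|I_{\beta}f(x)|\le C\big(\delta^{\beta}Mf(x)+\delta^{\beta-(2k+2)/\alpha}\|f\|_{q,\infty,\alpha}\big)$, and optimising with $\delta=(\|f\|_{q,\infty,\alpha}/Mf(x))^{\alpha/(2k+2)}$ produces the pointwise inequality $|I_{\beta}f(x)|\le C(Mf(x))^{1-\theta}\|f\|_{q,\infty,\alpha}^{\theta}$. Since $\|f\|_{q,\infty,\alpha}^{\theta}$ is a constant, taking $\|\cdot\|_{\bar q,\bar p,\alpha^{*}}$ and using the homogeneity identity $\||g|^{s}\|_{\bar q,\bar p,\alpha^{*}}=\|g\|_{s\bar q,s\bar p,s\alpha^{*}}^{s}$ with $s=1-\theta$ gives $\|I_{\beta}f\|_{\bar q,\bar p,\alpha^{*}}\le C\|f\|_{q,\infty,\alpha}^{\theta}\|Mf\|_{q,p,\alpha}^{1-\theta}$, because $((1-\theta)\bar q,(1-\theta)\bar p,(1-\theta)\alpha^{*})=(q,p,\alpha)$. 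The boundedness of $M$ on $(L^{q},L^{p})^{\alpha}(\mu)$ for $q>1$ from \cite{NF} then yields the first inequality. Finally, the second inequality follows from the first together with the continuous embedding $\|f\|_{q,\infty,\alpha}\le C\|f\|_{q,p,\alpha}$, i.e. $(L^{q},L^{p})^{\alpha}(\mu)\hookrightarrow(L^{q},L^{\infty})^{\alpha}(\mu)$, which is among the properties of these spaces.
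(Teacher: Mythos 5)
Your proposal follows essentially the same route as the paper's proof: the identical Hedberg-type splitting of $I_{\beta}f(x)$ at radius $\delta$, dyadic annuli with geometric series for the local part, H\"older with the $\Vert f\Vert_{q,\infty,\alpha}$ norm for the tail (convergent because $\beta<\frac{2k+2}{\alpha}$), optimisation in $\delta$ to get the pointwise bound $\vert I_{\beta}f(x)\vert\leq C(Mf(x))^{1-\theta}\Vert f\Vert^{\theta}_{q,\infty,\alpha}$, the power-homogeneity of the amalgam norm, Proposition \ref{maxi}, and finally $\Vert f\Vert_{q,\infty,\alpha}\leq C\Vert f\Vert_{q,p,\alpha}$. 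The only difference is cosmetic and concerns the step you flag as the main obstacle: instead of invoking a Jensen inequality for $\tau_{x}$ (delicate, since the Dunkl translation is not a positive operator on arbitrary functions), the paper transfers the translation onto the nonnegative radial kernel $\vert\cdot\vert^{\beta-2k-2}\chi_{B(0,2^{i+1}r)\setminus B(0,2^{i}r)}$, whose translate is nonnegative by \cite[Theorem 6.3.4]{DX}, and applies H\"older to $\int_{\mathbb{R}}\vert f\vert\,\tau_{-x}(g)^{\frac{1}{q}+\frac{1}{q'}}d\mu$, arriving at the same bound.
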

For $q=1$, we prove the following
\begin{thm} \label{theo 2}
Let $ 1 < \alpha < p < \infty$, $0<\beta< \frac{2k+2}{\alpha}$. Put $$ \frac{1}{\alpha^{*}} = \frac{1}{\alpha}-\frac{\beta}{2k+2}, \frac{1}{\bar{p}} = \frac{1}{p}(1-\frac{\alpha\beta}{2k+2}) \text{ and } \frac{1}{\bar{q}} = 1-\frac{\alpha\beta}{2k+2}. $$
Then
\begin{equation*}
\Vert I_{\beta}f \Vert_{(L^{\bar{q},\infty}, L^{\bar{p}})^{\alpha^{*}} (\mu) } \leq C_{3} \Vert f \Vert^{1-\frac{\alpha\beta}{2k+2}}_{1,p,\alpha} \Vert f \Vert^{\frac{\alpha\beta}{2k+2}}_{1,\infty,\alpha}, \;\;\;\;\;\;\; f \in (L^{1}, L^{p})^{\alpha} (\mu),
\end{equation*}
where
$$
 \Vert I_{\beta}f \Vert_{(L^{\bar{q},\infty},\; L^{\bar{p}})^{\alpha^{*}} (\mu) } = \sup\limits_{r > 0} \mu (B(0,r))^{\frac{1}{\alpha^{*}} - \frac{1}{\bar{q}} - \frac{1}{\bar{p}}} \Vert \Vert I_{\beta}f \tau^{\frac{1}{\bar{q}}}_{(- \cdot)}\chi_{B_r} \Vert_{L^{\bar{q},\infty}(\mu)}   \Vert_{\bar{p}}
$$
and
\begin{equation*}
\Vert I_{\beta}f \Vert_{(L^{\bar{q},\infty}, \, L^{\bar{p}})^{\alpha^{*}} (\mu) } \leq C \Vert f \Vert_{1,p,\alpha},\;\;\;\;\;\;\;\; f \in (L^{1}, L^{p})^{\alpha} (\mu).
\end{equation*}
\end{thm}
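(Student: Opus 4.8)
The plan is to obtain Theorem \ref{theo 2} as the endpoint $q=1$ of the argument behind Theorem \ref{theo 01}, the sole change being that the strong local estimate is replaced by a weak one carried out at the level of the norm $\|\cdot\|_{L^{\bar q,\infty}}$. Throughout write $\theta=\frac{\alpha\beta}{2k+2}\in(0,1)$ and let $M$ denote the Dunkl--Hardy--Littlewood maximal operator (the case $\beta=0$ of $M_\beta$). The starting point is a Hedberg-type pointwise inequality. Splitting $I_\beta f(x)=\int_{|z|<\delta}\tau_xf(z)|z|^{\beta-(2k+2)}\,d\mu(z)+\int_{|z|\ge\delta}(\cdots)$ and decomposing each region into dyadic annuli, I would use $\mu(B_r)\sim r^{2k+2}$ together with the bounds $\int_{B_R}\tau_x|f|\,d\mu\le\mu(B_R)\,Mf(x)$ on the inner region and $\int_{B_R}\tau_x|f|\,d\mu\le C\mu(B_R)^{1-\frac1\alpha}\|f\|_{1,\infty,\alpha}$ on the outer region, the latter being immediate from the definition of $\|\cdot\|_{1,\infty,\alpha}$. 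Since $0<\beta<\frac{2k+2}{\alpha}$, both geometric series converge, giving
\begin{equation*}
|I_\beta f(x)|\le C\Big(\delta^{\beta}Mf(x)+\delta^{\beta-\frac{2k+2}{\alpha}}\|f\|_{1,\infty,\alpha}\Big),\qquad \delta>0.
\end{equation*}
Optimizing in $\delta$ (balancing the two terms at $\delta=(\|f\|_{1,\infty,\alpha}/Mf(x))^{\alpha/(2k+2)}$) yields the pointwise bound
\begin{equation*}
|I_\beta f(x)|\le C\,(Mf(x))^{1-\theta}\,\|f\|_{1,\infty,\alpha}^{\theta}.
\end{equation*}

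Next I would apply the norm $\|\cdot\|_{(L^{\bar q,\infty},L^{\bar p})^{\alpha^*}(\mu)}$ to both sides. As $\|f\|_{1,\infty,\alpha}^{\theta}$ is a constant it factors out, leaving $\|(Mf)^{1-\theta}\|_{(L^{\bar q,\infty},L^{\bar p})^{\alpha^*}}$. Here I invoke the homogeneity of the weak Fofana norm under $g\mapsto g^{s}$: for $s=1-\theta\in(0,1)$,
\begin{equation*}
\big\|g^{\,s}\big\|_{(L^{\bar q,\infty},L^{\bar p})^{\alpha^*}}=\|g\|^{\,s}_{(L^{\bar q s,\infty},L^{\bar p s})^{\alpha^* s}},
\end{equation*}
which follows from the scalar identities $\|h^{s}\|_{L^{a,\infty}}=\|h\|_{L^{as,\infty}}^{s}$ and $\|h^{s}\|_{b}=\|h\|_{bs}^{s}$ together with the fact that the normalizing exponent depends linearly on the reciprocals of the indices; note that the inner weight $\tau^{1/\bar q}_{(-\cdot)}$ is raised to $\tau^{1/(\bar q s)}_{(-\cdot)}=\tau_{(-\cdot)}$, exactly matching the target space. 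A direct computation from the definitions gives $\bar q(1-\theta)=1$, $\bar p(1-\theta)=p$, $\alpha^*(1-\theta)=\alpha$, and $\frac{1}{\alpha^*}-\frac{1}{\bar q}-\frac{1}{\bar p}=(1-\theta)\big(\frac1\alpha-1-\frac1p\big)$, so the indices and the normalizing factors match exactly. Hence $\|(Mf)^{1-\theta}\|_{(L^{\bar q,\infty},L^{\bar p})^{\alpha^*}}=\|Mf\|^{1-\theta}_{(L^{1,\infty},L^{p})^{\alpha}}$.

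The last ingredient is the weak-type maximal inequality $\|Mf\|_{(L^{1,\infty},L^{p})^{\alpha}}\le C\|f\|_{1,p,\alpha}$, the $q=1$ endpoint of the boundedness of $M$ on the Dunkl--Fofana spaces from \cite{NF}; it encodes the weak $(1,1)$ behaviour of $M$ at the local scale while preserving the outer $L^{p}$ structure. Combining the three displays gives
\begin{equation*}
\|I_\beta f\|_{(L^{\bar q,\infty},L^{\bar p})^{\alpha^*}}\le C\,\|f\|_{1,\infty,\alpha}^{\theta}\,\|Mf\|^{1-\theta}_{(L^{1,\infty},L^{p})^{\alpha}}\le C_3\,\|f\|_{1,p,\alpha}^{1-\theta}\,\|f\|_{1,\infty,\alpha}^{\theta},
\end{equation*}
which is the first asserted inequality. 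The second follows at once from the embedding $\|f\|_{1,\infty,\alpha}\le C\|f\|_{1,p,\alpha}$ (valid because $\alpha\le p$; it follows by comparing local averages over balls of radius $r$ and $2r$ and using the doubling of $\mu$), upon substituting it into the right-hand side.

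I expect the main obstacle to lie in the two weak-type ingredients specific to $q=1$: establishing the homogeneity of the weak Fofana norm $(L^{\bar q,\infty},L^{\bar p})^{\alpha^*}$ under $g\mapsto g^{1-\theta}$, where the weak-$L^{\bar q}$ inner norm carrying the Dunkl-translation density $\tau^{1/\bar q}_{(-\cdot)}$ makes the power rule less transparent than in the strong case, and confirming the weak-type bound $M\colon(L^1,L^p)^{\alpha}\to(L^{1,\infty},L^p)^{\alpha}$. By contrast, the Hedberg inequality itself should require only the ball-measure estimates and the positivity of $\tau_x$ already in place for Theorem \ref{theo 01}.
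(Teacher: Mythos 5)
Your proposal is correct and follows essentially the same route as the paper: the Hedberg-type pointwise bound $|I_\beta f(x)|\le C(Mf(x))^{1-\theta}\|f\|_{1,\infty,\alpha}^{\theta}$ (the paper derives it once in the proof of Theorem \ref{theo 01} and reuses it at $q=1$), then the power identity $\Vert (M f)^{1-\theta} \Vert_{(L^{\bar{q},\infty},L^{\bar{p}})^{\alpha^{*}}(\mu)}=\Vert Mf \Vert^{1-\theta}_{(L^{1,\infty},L^{p})^{\alpha}(\mu)}$ via the same index identities $\bar q(1-\theta)=1$, $\bar p(1-\theta)=p$, $\alpha^*(1-\theta)=\alpha$, followed by the weak-type maximal bound of Proposition \ref{weakmaxi} and the embedding $\|f\|_{1,\infty,\alpha}\le C\|f\|_{1,p,\alpha}$. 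The only cosmetic difference is that you estimate the outer annuli directly from the definition of $\|\cdot\|_{1,\infty,\alpha}$ instead of running the (degenerate at $q=1$) H\"older step.
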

As a consequence of Theorems \ref{theo 01}, we obtain the corresponding boundedness results for $M_{\beta}$.
\begin{cor}\label{cor 1}
Let  $1 < q \leq \alpha \leq p < \infty$, $0<\beta< \frac{2k+2}{\alpha}$. Put
$$\frac{1}{\alpha^{*}} = \frac{1}{\alpha}-\frac{\beta}{2k+2},\;\;\; \frac{1}{\bar{p}} = \frac{1}{p}(1-\frac{\alpha\beta}{2k+2}) \text{ and } \frac{1}{\bar{q}} = \frac{1}{q}(1-\frac{\alpha\beta}{2k+2}).$$
Then
$$\Vert M_{\beta}f \Vert_{\bar{q},\bar{p},\alpha^{*}} \leq C \Vert f \Vert_{q,p,\alpha}, \;\;\;\;\; f \in (L^{q}, L^{p})^{\alpha} (\mu).$$
\end{cor}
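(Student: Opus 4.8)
The plan is to derive the corollary from Theorem \ref{theo 01} by dominating $M_\beta$ pointwise by $I_\beta$ applied to $|f|$, and then transferring the resulting inequality to the amalgam norm. First I would record that the measure of a centred ball satisfies $\mu(B_r) = c_k\, r^{2k+2}$ with $c_k = (2^{k+1}\Gamma(k+2))^{-1}$, obtained by integrating $|x|^{2k+1}$ over $(-r,r)$; in particular $(\mu(B_r))^{\frac{\beta}{2k+2}-1}$ is comparable to $r^{\beta-(2k+2)}$ with a constant depending only on $k$ and $\beta$.

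The key step is the pointwise inequality $M_\beta f(x) \le C\, I_\beta|f|(x)$. To establish it, fix $r>0$ and $x\in\mathbb{R}$. Since $0<\beta<2k+2$, the exponent $\beta-(2k+2)$ is negative, so for every $y\in B(0,r)$ one has $|y|^{\beta-(2k+2)}\ge r^{\beta-(2k+2)}$. Multiplying the non-negative quantity $\tau_x|f|(y)$ by this inequality and enlarging the domain of integration from $B(0,r)$ to $\mathbb{R}$ gives
\[
r^{\beta-(2k+2)}\int_{B(0,r)}\tau_x|f|(y)\,d\mu(y)\;\le\;\int_{\mathbb{R}}|y|^{\beta-(2k+2)}\,\tau_x|f|(y)\,d\mu(y)\;=\;I_\beta|f|(x).
\]
Combining this with the comparison $(\mu(B_r))^{\frac{\beta}{2k+2}-1}\simeq r^{\beta-(2k+2)}$ and taking the supremum over $r>0$ yields $M_\beta f(x)\le C\,I_\beta|f|(x)$. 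Here I rely on the positivity of the Dunkl translation in the rank-one setting (recalled in Section 2), which guarantees $\tau_x|f|\ge 0$ so that both sides are genuinely non-negative.

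To conclude, I would apply the second inequality of Theorem \ref{theo 01} to $|f|$ in place of $f$. Because the norm $\|\cdot\|_{q,p,\alpha}$ depends on a function only through its absolute value, $\||f|\|_{q,p,\alpha}=\|f\|_{q,p,\alpha}$, and Theorem \ref{theo 01} gives $\|I_\beta|f|\|_{\bar q,\bar p,\alpha^*}\le C\|f\|_{q,p,\alpha}$. The norm $\|\cdot\|_{\bar q,\bar p,\alpha^*}$ is monotone on non-negative functions: if $0\le g\le h$ then $g^{\bar q}\le h^{\bar q}$, whence $\tau_y(g^{\bar q})\le \tau_y(h^{\bar q})$ by positivity, and the monotonicity propagates through the inner integral, the $L^{\bar p}$-norm, and the supremum defining $\|\cdot\|_{\bar q,\bar p,\alpha^*}$. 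Applying this with $g=M_\beta f$ and $h=C\,I_\beta|f|$ transfers the pointwise bound to $\|M_\beta f\|_{\bar q,\bar p,\alpha^*}\le C\|I_\beta|f|\|_{\bar q,\bar p,\alpha^*}\le C\|f\|_{q,p,\alpha}$, which is the desired estimate.

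The only genuine obstacle is the positivity and consequent monotonicity of the Dunkl translation operator that both the pointwise reduction and the transfer to the amalgam norm require. In the one-dimensional $\mathbb{Z}_2$ setting this positivity is a known feature, so once it is in place the argument is essentially immediate given Theorem \ref{theo 01}.
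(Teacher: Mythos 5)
Your proposal is correct and follows essentially the same route as the paper: both establish the pointwise bound $M_{\beta}f(x)\le d_{k}^{\frac{\beta}{2k+2}-1}I_{\beta}|f|(x)$ by using $\mu(B_r)=d_k r^{2k+2}$, the inequality $|z|^{\beta-(2k+2)}\ge r^{\beta-(2k+2)}$ on $B(0,r)$, and enlargement of the domain of integration, and then apply the second inequality of Theorem \ref{theo 01}. The extra remarks on positivity of the Dunkl translation and monotonicity of the norm are sound and merely make explicit what the paper leaves implicit.
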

As a consequence of Theorem \ref{theo 2}, we obtain the following weak-type inequality for $M_{\beta}$.
\begin{cor}
\label{cor 2}
Let  $1 < \alpha < p < \infty$, $0<\beta< \frac{2k+2}{\alpha}$. Put
$$\frac{1}{\alpha^{*}} = \frac{1}{\alpha}-\frac{\beta}{2k+2},\;\;\; \frac{1}{\bar{p}} = \frac{1}{p}(1-\frac{\alpha\beta}{2k+2}) \text{ and } \frac{1}{\bar{q}} = \frac{1}{q}(1-\frac{\alpha\beta}{2k+2}).$$
Then
 $$\Vert M_{\beta}f \Vert_{(L^{\bar{q}, \infty}, L^{\bar{p}})^{\alpha^{*}} (\mu) } \leq C \Vert f \Vert_{1,p,\alpha} , \;\;\;\;\;\; f \in (L^{1}, L^{p})^{\alpha} (\mu).$$
\end{cor}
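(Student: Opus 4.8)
The plan is to deduce Corollary \ref{cor 2} from the second inequality of Theorem \ref{theo 2} by a pointwise comparison of $M_\beta$ with $I_\beta$, together with the monotonicity of the weak-type amalgam norm.

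\emph{Step 1: Pointwise domination.} First I would show that $M_\beta f(x) \le C\, I_\beta(|f|)(x)$ for every $x \in \mathbb{R}$. Since $d\mu$ is homogeneous of degree $2k+2$, one has $\mu(B_r) = c\, r^{2k+2}$ for a fixed $c>0$, so $\mu(B(0,r))^{\frac{\beta}{2k+2}-1}$ is comparable to $r^{\beta-(2k+2)}$. Because $0<\beta<2k+2$, the exponent $\beta-(2k+2)$ is negative, whence $|y|^{\beta-(2k+2)} \ge r^{\beta-(2k+2)}$ for every $y \in B_r$. Using that the Dunkl translation $\tau_x|f|$ is nonnegative, this gives
$$\mu(B(0,r))^{\frac{\beta}{2k+2}-1}\int_{B(0,r)}\tau_x|f|(y)\,d\mu(y) \le C\int_{B_r}\tau_x|f|(y)\,|y|^{\beta-(2k+2)}\,d\mu(y) \le C\, I_\beta(|f|)(x),$$
and taking the supremum over $r>0$ yields the desired domination.

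\emph{Step 2: Monotonicity of the norm.} Next I would observe that the norm $\|\cdot\|_{(L^{\bar q,\infty},L^{\bar p})^{\alpha^*}(\mu)}$ is monotone: if $0 \le g \le h$ $\mu$-a.e., then $\|g\|_{(L^{\bar q,\infty},L^{\bar p})^{\alpha^*}(\mu)} \le \|h\|_{(L^{\bar q,\infty},L^{\bar p})^{\alpha^*}(\mu)}$. Indeed, the weight $(\tau_{-(\cdot)}\chi_{B_r})^{1/\bar q}$ appearing in the definition is nonnegative, since the Dunkl translation preserves positivity on the real line; hence $g\,(\tau_{-y}\chi_{B_r})^{1/\bar q} \le h\,(\tau_{-y}\chi_{B_r})^{1/\bar q}$ pointwise. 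The inner weak-$L^{\bar q}$ quasinorm is monotone because $\{ |g w| > \lambda\} \subseteq \{|h w| > \lambda\}$, and the outer $L^{\bar p}$-norm in $y$ together with the supremum over $r$ preserve pointwise inequalities between nonnegative quantities. Applying this with $g = M_\beta f$ and $h = C\, I_\beta(|f|)$, and combining with Step 1, gives $\|M_\beta f\|_{(L^{\bar q,\infty},L^{\bar p})^{\alpha^*}(\mu)} \le C\, \|I_\beta(|f|)\|_{(L^{\bar q,\infty},L^{\bar p})^{\alpha^*}(\mu)}$.

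\emph{Step 3: Conclusion.} Finally I would apply the second inequality of Theorem \ref{theo 2} to $|f|$ in place of $f$. Since $\|\cdot\|_{1,p,\alpha}$ depends only on the modulus of its argument, $\||f|\|_{1,p,\alpha} = \|f\|_{1,p,\alpha}$, and Theorem \ref{theo 2} gives $\|I_\beta(|f|)\|_{(L^{\bar q,\infty},L^{\bar p})^{\alpha^*}(\mu)} \le C\, \|f\|_{1,p,\alpha}$; chaining this with Step 2 finishes the proof. I expect the main obstacle to be Step 2, namely a clean justification that the weak-type amalgam norm is monotone, which rests on the positivity-preserving property of the Dunkl translation so that the inner weight is genuinely nonnegative; the pointwise comparison in Step 1 and the passage $f \mapsto |f|$ in Step 3 are then straightforward.
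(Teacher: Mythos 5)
Your proposal is correct and follows essentially the same route as the paper: the paper establishes the pointwise bound $M_{\beta}f(x)\leq d_{k}^{\frac{\beta}{2k+2}-1}I_{\beta}|f|(x)$ (inequality (\ref{ef}), proved exactly as in your Step 1) and then invokes Theorem \ref{theo 2}. The only difference is that you make explicit the monotonicity of the weak-type amalgam norm and the identity $\||f|\|_{1,p,\alpha}=\|f\|_{1,p,\alpha}$, which the paper leaves implicit.
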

\begin{remark} Note that
\begin{enumerate}
\item Theorem \ref{theo 01} and Corollary \ref{cor 1} in the case $\alpha=p$ or $\alpha=q$  were proved in \cite{GM}.
\item Theorem \ref{theo 01} and Theorem \ref{theo 2} are generalizations, within the framework of Dunkl analysis, of Theorem 4.5 and Corollary 4.4 established in \cite{Fe} in the one-dimensional case.
\end{enumerate}
\end{remark}
The remainder of this note is organized as follows. Section 2 contains  the prerequisite for Dunkl analysis on the real line. In Section 3 we gather some basic results about the spaces $(L^{q},\, L^{p})(\mu)$. New properties of the spaces $(L^{q},\, L^{p})(\mu)$ and $(L^{q},\, L^{p})^{\alpha}(\mu)$ are established in Section 4. Section 5 is devoted to the proof of Theorems \ref{theo 01} and \ref{theo 2} as well as Corollaries \ref{cor 1} and \ref{cor 2}.

\section{Prerequisite on Dunkl analysis on the real line}
The Dunkl operator associated with the reflection group $\mathbb Z_{2}$ on $\mathbb R$ is defined by
$$\Lambda_{k}f(x)=\frac{df}{dx}(x)+\frac{2k+1}{x}\left(\frac{f(x)-f(-x)}{2}\right).$$
For $\lambda\in\mathbb C$, the Dunkl kernel denoted by $\E_{k}(\lambda)$ (see \cite{D2}), is the only solution of the initial value problem
$$\Lambda_{k}f(x)=\lambda f(x),\ f(0)=1,\ x\in\mathbb R.$$
It is given by the formula
\begin{equation*}
\E_{k}(\lambda x)=j_{k}(i\lambda x)+\frac{\lambda x}{2(k+1)}j_{k+1}(i\lambda x),\ x\in\mathbb R,
\end{equation*}
where 
$$j_k(z)=2^k\Gamma(k+1)\frac{J_k(z)}{z^k}=\Gamma(k+1)\sum^{\infty}_{n=0}\frac{(-1)^nz^{2n}}{n!2^{2n}\Gamma(n+k+1)},\quad z\in \mathbb C$$
is the normalized Bessel function of the first kind and of order $k$.
Notice that $\Lambda_{-\frac{1}{2}}=\frac{d}{dx}$ and $\E_{-\frac{1}{2}}(\lambda x)=e^{\lambda x}$.
 It is also proved (see \cite{R4}) that $\left|\E_{k}(ix)\right|\leq 1$ for every $x\in\mathbb R$.

The Dunkl kernel $\E_{k}$ gives rise to an integral transform on $\mathbb R$ denoted $\mathcal F_{k}$ and called Dunkl transform (see \cite{MFE}). For $f\in L^{1}(\mu)$
 $$\mathcal{F}_{k}f(\lambda)=\int_{\mathbb R}\mathfrak E_{k}(-i\lambda x)f(x)d\mu(x),\ \ \lambda\in\mathbb R.$$
We have the following properties of the Dunkl transform  given in  \cite{MFE}  (see also \cite{D1}).
\begin{prop} \label{theo 1}
\begin{enumerate}
\item Let $f\in L^1(\mu)$. If $\mathcal{F}_{k} (f)$ is in $L^1(\mu)$, then we have the following inversion formula :
$$f(x)=c\int_{\mathbb R}\E_{k}(ixy)\mathcal{F}_{k}(f)(y)d\mu(y).$$
\item The Dunkl transform has a unique extension to an isometric isomorphism on $L^2(\mu)$.
\end{enumerate}
\end{prop}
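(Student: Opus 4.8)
The plan is to follow the classical strategy for a Fourier-type inversion theorem, exploiting the fact that the Gaussian is (up to normalization) a fixed point of $\mathcal{F}_k$ and using a Gaussian regularization to build an approximate identity.

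For part (1), I would first record the action of $\mathcal{F}_k$ on the Gaussian: writing $\varphi(x)=e^{-|x|^2/2}$, one checks directly from the series defining $\E_k$ and $j_k$ that $\mathcal{F}_k\varphi=\varphi$, and more generally that $\mathcal{F}_k(e^{-\epsilon|\cdot|^2})$ is again a Gaussian with reciprocal width. Then, for $\epsilon>0$, I would set
$$f_\epsilon(x)=c\int_{\mathbb R}\E_k(ixy)\,e^{-\epsilon|y|^2}\,\mathcal{F}_k f(y)\,d\mu(y).$$
Since $|\E_k(ixy)|\le 1$ and $\mathcal{F}_k f\in L^1(\mu)$, dominated convergence yields $f_\epsilon(x)\to c\int_{\mathbb R}\E_k(ixy)\mathcal{F}_k f(y)\,d\mu(y)$ as $\epsilon\to 0$, pointwise in $x$. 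On the other hand, inserting the definition of $\mathcal{F}_k f$ and applying Fubini (justified because the double integrand is absolutely integrable), I would rewrite $f_\epsilon$ as a Dunkl convolution of $f$ against the Dunkl translate of the Gaussian computed in the first step. This family is a genuine approximate identity, so $f_\epsilon\to f$ in $L^1(\mu)$, hence $\mu$-a.e.\ along a subsequence. Matching the two limits gives the inversion formula $\mu$-a.e.

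For part (2), I would restrict first to a dense subspace on which part (1) applies, namely the Schwartz space $\mathcal{S}(\mathbb R)$, which $\mathcal{F}_k$ maps into itself and on which the inversion formula holds (for $f\in\mathcal{S}(\mathbb R)$ one has $\mathcal{F}_k f\in\mathcal{S}(\mathbb R)\subset L^1(\mu)$). Writing $\|f\|_2^2=\int_{\mathbb R} f\bar f\,d\mu$, substituting the inversion formula for $f$, using $\overline{\E_k(ixy)}=\E_k(-ixy)$ for real $x,y$, and applying Fubini, I would obtain $\|f\|_2=\|\mathcal{F}_k f\|_2$ for every $f\in\mathcal{S}(\mathbb R)$ (the normalization constant $c$ absorbing the numerical factor). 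Density of $\mathcal{S}(\mathbb R)$ in $L^2(\mu)$ then extends $\mathcal{F}_k$ uniquely to an isometry of $L^2(\mu)$. Surjectivity follows because the inverse transform $g\mapsto c\int_{\mathbb R}\E_k(i\,\cdot\,y)g(y)\,d\mu(y)$ is likewise a continuous isometry that inverts $\mathcal{F}_k$ on the dense subspace, hence on all of $L^2(\mu)$.

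The main obstacle is the groundwork underlying part (1): the explicit computation of $\mathcal{F}_k\varphi$ and the verification that the regularizing kernel is a bona fide approximate identity, both of which rest on the product (translation) formula for the Dunkl kernel and on controlling the Dunkl translate of a Gaussian. It is precisely here that the difference-operator structure, absent in the ordinary Fourier setting, demands the most care. Once the Gaussian identity and the approximate-identity property are secured, the remaining limiting and density arguments are routine.
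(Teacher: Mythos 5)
The paper does not prove this proposition at all: it is quoted verbatim as background from de Jeu \cite{MFE} (see also \cite{D1}), so there is no in-paper argument to compare yours against. Judged on its own, your outline is the standard and correct strategy for Dunkl inversion and Plancherel, and it is essentially the route taken in the cited sources: Gaussian eigenfunction property of $\mathcal{F}_k$, Gaussian regularization $f_\epsilon$, dominated convergence on one side, Fubini plus an approximate-identity argument on the other, then matching limits a.e.; and for part (2), Parseval on a dense invariant subspace followed by extension by density and surjectivity via the explicit inverse. You are also right to locate the real work where you do: the identity $\mathcal{F}_k(e^{-|\cdot|^2/2})=e^{-|\cdot|^2/2}$ (with the normalization built into $d\mu$), and the fact that convolution with the rescaled Gaussians is an approximate identity on $L^1(\mu)$. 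In the rank-one $\mathbb{Z}_2$ setting of this paper those ingredients are available: the Dunkl translate of a function is given by R\"osler's explicit kernel formula, which yields $\|\tau_x g\|_1\le 4\|g\|_1$ (Proposition \ref{young}) and preserves the total integral, and this is enough to push the classical approximate-identity argument through even though $\tau_x$ is not positivity-preserving in general. One small caution: the conclusion of your part (1) is the inversion formula $\mu$-a.e.\ (or for the continuous representative), which is how the cited statement should be read; and in part (2) you should say explicitly that $\mathcal{F}_k$ maps $\mathcal{S}(\mathbb{R})$ into itself (a nontrivial fact, proved in \cite{MFE}) before invoking the inversion formula there. With those caveats your plan is sound; it is a proof the paper deliberately omits rather than one it replaces with something different.
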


\begin{defn}
Let $y\in\mathbb R$ be given. The generalized translation operator $f\mapsto\tau_y f$ is defined on $L^2(\mu)$ by the equation
$$\mathcal{F}_{k}(\tau_yf)(x)=\E_{k}(ixy)\mathcal{F}_{k}(f)(x);\quad x\in\mathbb R$$
\end{defn}
Mourou proved in \cite{M} that generalized translation operators have the following properties:
\begin{prop}
\begin{enumerate}
\item The operator $\tau_{x}$, $x\in\mathbb R$, is a continuous linear operator from $\mathcal E(\mathbb R)$ into itself,
\item For $f\in\mathcal E(\mathbb R)$ and  $x,y\in \mathbb R$
\begin{enumerate}
\item $\tau_{x}f(y)=\tau_{y}f(x)$,
\item $\tau_{0}f=f$,
\item $\tau_{x}\circ\tau_{y}=\tau_{y}\circ\tau_{x}$.
\end{enumerate}
\end{enumerate}
\end{prop}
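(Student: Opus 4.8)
The plan is to separate the algebraic identities (items (2a)--(2c)), which follow directly from the spectral definition together with elementary properties of the Dunkl kernel, from the topological statement (item (1)), which is the genuine difficulty and requires an explicit integral representation of $\tau_x$.

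For the algebraic part I would first record that $\E_k(0)=1$, which is part of the defining initial value problem for the Dunkl kernel, and that $\lambda\mapsto\E_k(\lambda x)$ is entire. Linearity of $\tau_x$ is immediate from the linearity of $\mathcal F_k$ and the defining equation $\mathcal F_k(\tau_y f)=\E_k(i\,\cdot\,y)\mathcal F_k(f)$. For (2b), setting $y=0$ gives $\mathcal F_k(\tau_0 f)=\E_k(0)\mathcal F_k(f)=\mathcal F_k(f)$, so $\tau_0 f=f$ by injectivity of $\mathcal F_k$ on $L^2(\mu)$ (Proposition \ref{theo 1}(2)). For (2c), on the transform side
$$\mathcal F_k(\tau_x\tau_y f)(\xi)=\E_k(i\xi x)\E_k(i\xi y)\mathcal F_k(f)(\xi),$$
whose right-hand side is symmetric under $x\leftrightarrow y$; injectivity of $\mathcal F_k$ then yields $\tau_x\tau_y f=\tau_y\tau_x f$. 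For (2a) I would apply the inversion formula of Proposition \ref{theo 1}(1) to a sufficiently regular $f$ (say with $\mathcal F_k f\in L^1(\mu)$) to write
$$\tau_y f(x)=c\int_{\mathbb R}\E_k(ix\xi)\E_k(iy\xi)\mathcal F_k(f)(\xi)\,d\mu(\xi),$$
and observe that the integrand is symmetric in $x$ and $y$; hence $\tau_y f(x)=\tau_x f(y)$, the general case following by a density argument.

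The hard part is (1): the defining equation lives on $L^2(\mu)$ and carries no pointwise or derivative information, so it cannot by itself control the Fr\'echet seminorms $\sup_{|y|\le R}\sup_{n\le N}\left|\frac{d^n}{dy^n}(\tau_x f)(y)\right|$ that topologize $\mathcal E(\mathbb R)$. To handle this I would pass to the explicit integral representation of the rank-one Dunkl translation, namely a formula $\tau_x f(y)=\int_{\mathbb R}f(z)\,d\gamma_{x,y}(z)$ whose kernel is supported in $\{\,\big||x|-|y|\big|\le|z|\le|x|+|y|\,\}$ and is given by an integral of $f$ against $(1-t^2)^{k-1/2}$-type weights on $[-1,1]$. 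With this representation in hand one shows $\tau_x f\in C^\infty$ by differentiating under the integral sign, and one estimates each seminorm of $\tau_x f$ on a compact set by finitely many seminorms of $f$ on a (larger) compact set determined by the support of the kernel; this simultaneously gives that $\tau_x$ maps $\mathcal E(\mathbb R)$ into itself and is continuous for the Fr\'echet topology.

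Finally I would check that this integral representation is compatible with the spectral definition by taking the Dunkl transform of both sides and invoking the product formula $\E_k(i\xi x)\E_k(i\xi y)=\int_{\mathbb R}\E_k(i\xi z)\,d\gamma_{x,y}(z)$, so that the two candidate definitions of $\tau_x$ agree on the dense class where both make sense. The main obstacle is thus the construction and the support/smoothness control of the translation kernel $\gamma_{x,y}$; once that is available, everything else reduces to the symmetry of the product $\E_k(i\xi x)\E_k(i\xi y)$ and the injectivity of the Dunkl transform.
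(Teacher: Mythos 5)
The first thing to note is that the paper does not prove this proposition at all: it is quoted verbatim from Mourou's work (the sentence preceding it reads ``Mourou proved in \cite{M} that generalized translation operators have the following properties''), so there is no in-paper argument to compare yours against. Judged on its own terms, your outline follows the standard route (R\"osler's product formula for the Dunkl kernel, the associated translation kernel $\gamma_{x,y}$ supported in $\bigl||x|-|y|\bigr|\le|z|\le|x|+|y|$, and seminorm estimates obtained by differentiating under the integral sign), which is essentially what Mourou's transmutation-operator construction delivers. Your identification of item (1) as the genuine difficulty, and of the algebraic identities as consequences of the symmetry of $\E_{k}(i\xi x)\E_{k}(i\xi y)$, is correct.

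There is, however, one wrinkle you should address explicitly: $\mathcal E(\mathbb R)=C^{\infty}(\mathbb R)$ is not contained in $L^{2}(\mu)$, so the spectral definition $\mathcal F_{k}(\tau_{y}f)=\E_{k}(i\,\cdot\,y)\mathcal F_{k}(f)$ simply does not apply to a general $f\in\mathcal E(\mathbb R)$, and your derivations of (2a)--(2c) by ``injectivity of $\mathcal F_{k}$'' are therefore only legitimate on a subclass such as $C_{c}^{\infty}(\mathbb R)$ or functions with $\mathcal F_{k}f\in L^{1}(\mu)$. The fix is the one you half-gesture at but should make the backbone of the argument: establish item (1) first, via the integral representation, so that $\tau_{x}$ is a continuous operator on the Fr\'echet space $\mathcal E(\mathbb R)$; verify (2a)--(2c) on $C_{c}^{\infty}(\mathbb R)$ using the transform; and then extend to all of $\mathcal E(\mathbb R)$ by density of $C_{c}^{\infty}(\mathbb R)$ in $\mathcal E(\mathbb R)$ together with the continuity just proved. (Alternatively, read all three identities directly off the symmetry and associativity of the kernel $\gamma_{x,y}$, bypassing the transform entirely.) As written, your proposal proves the algebraic identities for a class of functions on which $\tau_{x}$ has not yet been shown to coincide with the operator of the proposition, and leaves the construction of $\gamma_{x,y}$ --- which is the entire content of the cited reference --- as a black box; with the reordering above and a citation for the product formula, the argument closes.
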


\begin{prop} [see Soltani \cite{S}]\label{prop 1}
	For  $x,\lambda \in \mathbb{R}$ and $f \in  L^{1}(\mu)$,
	$$\mathcal{F}_{k}(\tau_{x}f)(\lambda) = \E_{k}(ix \lambda) \mathcal{F}_{k}(f)(\lambda).$$
\end{prop}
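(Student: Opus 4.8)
The plan is to deduce the identity for $f \in L^{1}(\mu)$ from the defining relation on $L^{2}(\mu)$ by a density argument, exploiting that $L^{1}(\mu) \cap L^{2}(\mu)$ is dense in $L^{1}(\mu)$ and that the translation operator is well behaved on the whole line. First I would record the two facts special to the rank-one setting that make the extension work. On $\mathbb{R}$ the Dunkl translation admits an explicit integral representation $\tau_{x}f(y) = \int_{\mathbb{R}} f \, d\nu_{x,y}$ with kernels $\nu_{x,y}$ of uniformly bounded total variation; consequently $\tau_{x}$ extends to a bounded operator on $L^{1}(\mu)$, say $\Vert \tau_{x}f \Vert_{1} \leq C \Vert f \Vert_{1}$, and on $L^{1}(\mu) \cap L^{2}(\mu)$ this integral definition coincides with the one given through $\mathcal{F}_{k}$. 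I would also use that the Dunkl transform maps $L^{1}(\mu)$ contractively into $L^{\infty}$: since $\vert \E_{k}(-i\lambda y) \vert \leq 1$, one has $\Vert \mathcal{F}_{k}g \Vert_{\infty} \leq \Vert g \Vert_{1}$ for every $g \in L^{1}(\mu)$.

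Next, for $f \in L^{1}(\mu) \cap L^{2}(\mu)$ the function $\tau_{x}f$ again belongs to $L^{1}(\mu)$ by the boundedness above, so its $L^{1}$-Dunkl transform is defined and agrees with the $L^{2}$-transform; the defining relation of the Definition then yields $\mathcal{F}_{k}(\tau_{x}f)(\lambda) = \E_{k}(ix\lambda)\mathcal{F}_{k}(f)(\lambda)$ on this dense subspace. To reach a general $f \in L^{1}(\mu)$, I would choose $f_{n} \in L^{1}(\mu) \cap L^{2}(\mu)$ with $f_{n} \to f$ in $L^{1}(\mu)$. On the one hand $\tau_{x}f_{n} \to \tau_{x}f$ in $L^{1}(\mu)$, whence $\mathcal{F}_{k}(\tau_{x}f_{n}) \to \mathcal{F}_{k}(\tau_{x}f)$ uniformly; on the other hand $\mathcal{F}_{k}(f_{n}) \to \mathcal{F}_{k}(f)$ uniformly, and because $\vert \E_{k}(ix\lambda) \vert \leq 1$ the products $\E_{k}(ix\lambda)\mathcal{F}_{k}(f_{n})(\lambda)$ converge uniformly to $\E_{k}(ix\lambda)\mathcal{F}_{k}(f)(\lambda)$. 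Passing to the limit in the identity established for $f_{n}$ gives the claim for all $f \in L^{1}(\mu)$.

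Alternatively, one can argue directly once the product formula $\tau_{z}[\E_{k}(i\lambda \cdot)](y) = \E_{k}(i\lambda z)\E_{k}(i\lambda y)$ and the transpose relation $\int_{\mathbb{R}} \tau_{x}f \cdot g \, d\mu = \int_{\mathbb{R}} f \cdot \tau_{-x}g \, d\mu$ are available: taking $g(y) = \E_{k}(-i\lambda y)$ and moving $\tau_{x}$ onto the kernel produces the factor $\E_{k}(ix\lambda)$ immediately. The main obstacle in either route is the same and is genuinely dimension-dependent: one must justify that $\tau_{x}$ is bounded on $L^{1}(\mu)$, so that the limiting (or the Fubini) manipulation is legitimate, which rests on the boundedness of the rank-one translation kernel, a property that fails for the naive $L^{p}$-estimates in higher dimensions. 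Everything else is a routine density and uniform-convergence argument.
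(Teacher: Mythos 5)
The paper offers no proof of this proposition: it is imported verbatim from Soltani \cite{S}, so there is no internal argument to compare yours against. Judged on its own, your density argument is correct and is essentially the standard route. The two ingredients you need are both already on record in the paper: the bound $\vert \E_{k}(i t)\vert \leq 1$, which gives $\Vert \mathcal{F}_{k}g\Vert_{\infty}\leq \Vert g\Vert_{1}$, and the $L^{1}$-boundedness $\Vert \tau_{x}f\Vert_{1}\leq 4\Vert f\Vert_{1}$ of Proposition \ref{young}, which is exactly the rank-one fact you correctly identify as the crux (and which indeed comes from Soltani's uniform total-variation bound on the translation kernel). With those in hand, the passage from $L^{1}\cap L^{2}$ to $L^{1}$ by uniform convergence of both sides is routine. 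The only point worth tightening is the base case: the defining relation $\mathcal{F}_{k}(\tau_{x}f)=\E_{k}(ix\cdot)\mathcal{F}_{k}(f)$ is an identity in $L^{2}(\mu)$, hence a priori only $\mu$-almost everywhere, whereas the proposition asserts a pointwise identity for every $\lambda$; you should remark that for $f\in L^{1}(\mu)\cap L^{2}(\mu)$ both sides are continuous functions of $\lambda$ (dominated convergence plus continuity of $\E_{k}$), so the a.e.\ equality upgrades to equality everywhere before you take limits. Your alternative route via the product formula and the transpose relation $\int \tau_{x}f\cdot g\,d\mu=\int f\cdot \tau_{-x}g\,d\mu$ is also sound and is arguably cleaner, since it produces the factor $\E_{k}(ix\lambda)$ in one line; it needs the same $L^{1}$ kernel bound to justify Fubini, as you note.
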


For $x\in\mathbb R$, let $B(x,r)=\left\{y\in\mathbb R:\max\left\{0,\left|x\right|-r\right\}<\left|y\right|<\left|x\right|+r\right\}$ if $x\neq 0$ and $B_r:=B(0,r)=\left]-r,r\right[$. We  have $\mu(B_r)=d_{k} r^{2k+2}$ where $ d_{k}=\left[2^{k+1}(k+1)\Gamma(k+1)\right]^{-1}$.

 For $x\in\mathbb R $ and $r>0$, the map $y\mapsto\tau_{x}\chi_{B_r}(y)$  is supported in $B(x,r)$ and
 \begin{equation}
 0\leq \tau_{x}\chi_{B_r}(y)\leq\min\left\{1,\frac{2C_\kappa}{2\kappa+1}\left(\frac{r}{\left|x\right|}\right)^{2\kappa+1}\right\},\  y \in B(x,r),\label{**}
 \end{equation}
 as proved in \cite{GM3}.\\
 The following property of the translation will be useful.
 \begin{prop}
There exists positive real numbers $C_{1}$ and $C_{2}$ such that for all $(y, \, f) \in \mathbb{R} \times L^{0}(\mu)$ we have
\begin{eqnarray*}
C_{1} \displaystyle \int_\mathbb{R}   \vert f \vert^{q} (x) \chi_{I(y,1)} (x)  d\mu (x)&\leq & \displaystyle \int_\mathbb{R}  \tau_{y} \vert f \vert^{q} (x) \chi_{B(0,1)} (x)  d\mu (x)\\
& \leq & C_{2} \displaystyle \int_\mathbb{R}   \vert f \vert^{q} (x) \chi_{I(y,1)} (x)  d\mu (x).
 \end{eqnarray*}
\label{p2}
 \end{prop}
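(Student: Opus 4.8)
\emph{Sketch of the intended approach.} The plan is to push the whole comparison onto the single function $\tau_{-y}\chi_{B_1}$, where the pointwise information already available in \eqref{**} can be exploited. The key first step is the self-transposedness of the Dunkl translation with respect to $\mu$, namely $\int_\mathbb{R}(\tau_y g)\,h\,d\mu=\int_\mathbb{R}g\,(\tau_{-y}h)\,d\mu$. I would derive this from the Plancherel formula of Proposition \ref{theo 1}(2) together with the multiplier identity $\mathcal F_k(\tau_y g)=\E_k(iy\cdot)\mathcal F_k(g)$ of Proposition \ref{prop 1} and the relation $\overline{\E_k(-iy\lambda)}=\E_k(iy\lambda)$ for real $\lambda$, first for $g,h\in L^1(\mu)\cap L^2(\mu)$, and then extend to an arbitrary nonnegative $g=|f|^q\in L^0(\mu)$ by monotone convergence (using that $\tau_{-y}$ preserves nonnegativity in the rank-one case). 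Taking $h=\chi_{B_1}$ yields the identity $\int_\mathbb{R}(\tau_y|f|^q)\chi_{B(0,1)}\,d\mu=\int_\mathbb{R}|f|^q\,(\tau_{-y}\chi_{B_1})\,d\mu$, so the proposition reduces to the two-sided weighted estimate $\tau_{-y}\chi_{B_1}(x)\,d\mu(x)\asymp\chi_{I(y,1)}(x)\,d\mu(x)$, uniformly in $y$.

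For the right-hand (upper) inequality, everything needed is at hand. By \eqref{**} and the support property recalled before it, $\tau_{-y}\chi_{B_1}$ is supported in $B(-y,1)=B(y,1)$ and satisfies $0\le\tau_{-y}\chi_{B_1}\le\min\{1,C|y|^{-(2k+1)}\}$. I would then split according to $|y|\le 2$ and $|y|>2$. In the first regime all of $B(0,1)$, $B(y,1)$ and $I(y,1)$ lie in a fixed compact set of $\mu$-measure bounded above and below, so the comparison is routine. In the second regime the weight $|x|^{2k+1}$ is comparable to $|y|^{2k+1}$ throughout $B(y,1)$, so the decay factor $|y|^{-(2k+1)}$ exactly balances the growth of $\mu$ on $B(y,1)$ and returns a constant multiple of the localized integral $\int_{I(y,1)}|f|^q\,d\mu$.

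The genuinely delicate part is the left-hand (lower) inequality, since \eqref{**} supplies only an upper bound; a soft argument from mass conservation $\int_\mathbb{R}\tau_{-y}\chi_{B_1}\,d\mu=\mu(B_1)$ controls only averages and cannot force the pointwise lower bound required for the estimate to hold for every $f$. Here I would invoke the explicit product (R\"osler) formula for the rank-one Dunkl translation, which exhibits $\tau_{-y}\chi_{B_1}(x)$ as the integral of a nonnegative kernel over the set of parameters for which the sample point lands in $B_1$. Evaluating this kernel in the bulk of the near component of $B(y,1)$ and estimating the resulting integral near the degenerate endpoint of the angular variable gives the matching lower bound $\tau_{-y}\chi_{B_1}(x)\ge c\,|y|^{-(2k+1)}$ there. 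The two technical points to watch are the boundary layer of $B(y,1)$, where the translate degenerates and where one must use that $I(y,1)$ sits in the bulk, and the uniformity in $y$ of all constants; both are handled by the two-regime split above together with the doubling property of $\mu$. Combining the bulk lower bound with $|x|^{2k+1}\asymp|y|^{2k+1}$ on $B(y,1)$ and the measure comparison $\mu(I(y,1))\asymp\mu(B_1)$ then produces the constant $C_1$, completing the proof.
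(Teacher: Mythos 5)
The paper never proves Proposition \ref{p2}: it is stated in the prerequisites section without argument or citation, so there is no in-paper proof to compare you against and I can only judge your plan on its own terms. Your opening reduction via the transposition identity $\int_{\mathbb R}(\tau_y g)h\,d\mu=\int_{\mathbb R} g\,(\tau_{-y}h)\,d\mu$ is the natural move, and it is exactly how the paper itself manipulates these integrals in the proof of Proposition \ref{prop 3}. One caveat already at this stage: the parenthetical ``$\tau_{-y}$ preserves nonnegativity in the rank-one case'' is false as a general statement --- the rank-one Dunkl translation is given by a \emph{signed} kernel (hence R\"osler's terminology of signed hypergroups), and positivity holds only on even functions. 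This happens to be harmless here because the only translate you need to be nonnegative is that of the even function $\chi_{B_1}$, which (\ref{**}) guarantees, but the monotone convergence step should be run against the fixed nonnegative weight $\tau_{-y}\chi_{B_1}$, not against a positivity of the operator $\tau_{-y}$ that it does not possess.

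The genuine gap is that the pointwise two-sided comparison $\tau_{-y}\chi_{B_1}(x)\,d\mu(x)\asymp\chi_{I(y,1)}(x)\,d\mu(x)$, uniform in $y$, on which your entire argument rests, does not hold, and both halves of your argument for it break. For the upper half: $\tau_{-y}\chi_{B_1}$ is supported in $B(y,1)$, which for $y\neq 0$ contains a second component around $-y$ that is disjoint from $I(y,1)$, and the very product formula you invoke for the lower bound shows the translate is strictly positive on a set of positive measure there; hence $\tau_{-y}\chi_{B_1}\leq C\,\chi_{I(y,1)}$ fails, and an $f$ supported near $-y$ defeats the estimate --- this is not cured by your split into $|y|\le 2$ and $|y|>2$, since the far component exists in both regimes. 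For the lower half: the step ``$\mu(I(y,1))\asymp\mu(B_1)$'' confuses $\mu$ with Lebesgue measure; in fact $\mu(I(y,1))\approx(1+|y|)^{2k+1}\to\infty$, and the advertised ``exact balance'' between the decay $|y|^{-(2k+1)}$ of the translate and the growth of the weight occurs for $dx$, not for $d\mu$, so what your bulk estimate actually delivers is a constant $C_1$ degenerating like $|y|^{-(2k+1)}$. This is not a repairable slip in the write-up: taking $f=\chi_{I(y,1)}$, the middle quantity equals $\int_{I(y,1)}\tau_{-y}\chi_{B_1}\,d\mu\leq\int_{\mathbb R}\tau_{-y}\chi_{B_1}\,d\mu=\mu(B_1)$ (the mass conservation used in the proof of Proposition \ref{prop 3}), which is bounded uniformly in $y$, whereas $C_1\mu(I(y,1))$ is unbounded, so no uniform $C_1$ can exist for the inequality in the form printed. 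Before any proof can close, the statement itself has to be corrected --- renormalized by the measures of the two sets (or with $dx$ in place of $d\mu$ on $I(y,1)$), and with the right-hand set enlarged to account for both components of $B(y,1)$ --- and your proposal as written detects neither problem.
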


Let $f$ and $g$ be two continuous functions on $\mathbb R$ with compact support. We
define the generalized convolution $\ast_{k}$ of $f$ and $g$ by
\begin{equation*}
f\ast_{k}g(x)=\int_{\mathbb R}\tau_{x}f(-y)g(y)d\mu(y).
\end{equation*}
The generalized convolution $\ast_{k}$ is associative and commutative (see \cite{R4}). We also have the below result.
\begin{prop}[see Soltani \cite{S}]\label{young}
\begin{enumerate}
\item For all $x\in\mathbb R$, the operator $\tau_{x}$ extends to $L^{p}(\mu)$, $p\geq 1$, and
\begin{equation}
\left\|\tau_{x}f\right\|_{p}\leq 4\left\|f\right\|_{p}\label{conttranslation}
\end{equation}
for all $f\in L^{p}(\mu)$.
\item Assume that $p, q, r \in\left[1,\infty\right]$ and satisfy $\frac{1}{p} +\frac{1}{q} = 1+\frac{1}{r}$. Then the generalized convolution defined on
$\mathcal C_{c}\times\mathcal C_{c}$, extends to a continuous map from $L^{p}(\mu)\times L^{q}(\mu)$ to $L^{r}(\mu)$, and we have
\begin{equation*}
\left\|f\ast_{k}g\right\|_{r}\leq 4\left\|f\right\|_{p}\left\|g\right\|_{q}.
\end{equation*}
\end{enumerate}
\end{prop}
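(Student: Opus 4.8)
The plan is to prove the two parts in order, deducing the Young-type inequality of part~(2) from the $L^p$-boundedness of the translation in part~(1). For part~(1) I would establish the bound at the endpoints $p=1,2,\infty$ and interpolate. The case $p=2$ is immediate: by Proposition~\ref{prop 1} one has $\mathcal F_k(\tau_x f)(\lambda)=\E_k(ix\lambda)\mathcal F_k f(\lambda)$, and since $|\E_k(ix\lambda)|\le 1$ while $\mathcal F_k$ is an isometry of $L^2(\mu)$ by Proposition~\ref{theo 1}, Plancherel gives $\|\tau_x f\|_2\le\|f\|_2$.

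The delicate endpoints are $p=1$ and $p=\infty$. Here I would use the explicit rank-one representation of the Dunkl translation as integration against a family of compactly supported signed measures $\nu_{x,y}$, namely $\tau_x f(y)=\int_{\mathbb R}f\,d\nu_{x,y}$, whose total variations are uniformly bounded, $\sup_{x,y}\|\nu_{x,y}\|\le 4$. The bound $\|\tau_x f\|_\infty\le 4\|f\|_\infty$ is then immediate, and $\|\tau_x f\|_1\le 4\|f\|_1$ follows by Tonelli once one uses the symmetry of the kernel inherited from $\tau_x f(y)=\tau_y f(x)$, which allows the total variation to be integrated in $y$ against $d\mu$ and to return a constant multiple of $d\mu(z)$. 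Riesz--Thorin interpolation between these two bounds and the $L^2$-isometry then yields $\|\tau_x f\|_p\le 4\|f\|_p$ for every $p\in[1,\infty]$, which is the estimate \eqref{conttranslation}.

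For part~(2) I would run the classical three-exponent proof of Young's inequality, now legitimate because part~(1) is in hand. Writing $f\ast_k g(x)=\int_{\mathbb R}\tau_x f(-y)\,g(y)\,d\mu(y)$ and choosing $u,v$ by $\tfrac1r+\tfrac1u=\tfrac1p$ and $\tfrac1r+\tfrac1v=\tfrac1q$ (so that $\tfrac1r+\tfrac1u+\tfrac1v=1$ thanks to the hypothesis $\tfrac1p+\tfrac1q=1+\tfrac1r$), I factor the integrand as
$$|\tau_x f(-y)\,g(y)|=\bigl(|\tau_x f(-y)|^{p}|g(y)|^{q}\bigr)^{1/r}\,|\tau_x f(-y)|^{p/u}\,|g(y)|^{q/v}$$
and apply H\"older with exponents $r,u,v$. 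Since $d\mu$ is even one has $\|\tau_x f(-\cdot)\|_p=\|\tau_x f\|_p\le 4\|f\|_p$, and since $\tau_x f(-y)=\tau_{-y}f(x)$ the $x$-integral of $|\tau_x f(-y)|^{p}$ equals $\|\tau_{-y}f\|_p^{p}\le(4\|f\|_p)^{p}$. Taking the $L^r(d\mu)$-norm in $x$ and using Tonelli on the first factor, the powers combine as $p/u+p/r=1$ and $q/v+q/r=1$, producing exactly $\|f\ast_k g\|_r\le 4\|f\|_p\|g\|_q$. The degenerate cases $r=\infty$ (then $q=p'$, a single H\"older step) and $p=1$ or $q=1$ (Minkowski's integral inequality together with part~(1)) are handled separately and more easily.

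The crux of the whole argument is the uniform total-variation control of the translation kernel in part~(1). Unlike the Euclidean convolution, the Dunkl translation fails to be positivity-preserving when $k>0$, so the $L^1$- and $L^\infty$-bounds cannot be read off from a positive kernel and genuinely require the signed-measure estimate $\|\nu_{x,y}\|\le 4$; everything downstream---the $L^2$-isometry, Riesz--Thorin interpolation, and the H\"older/Tonelli computation for Young's inequality---is then routine.
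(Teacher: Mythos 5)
The paper does not actually prove this proposition---it is imported verbatim from Soltani \cite{S}---and your argument reconstructs essentially the standard proof found there: R\"osler's representation of $\tau_x$ by compactly supported signed measures of total variation at most $4$ gives the $L^1(\mu)$ and $L^\infty(\mu)$ endpoint bounds, interpolation (together with the $L^2$ isometry via the Dunkl transform) gives \eqref{conttranslation} for all $p$, and the classical three-exponent H\"older/Tonelli scheme, using $\tau_x f(-y)=\tau_{-y}f(x)$ and the evenness of $\mu$, then yields the Young inequality with the same constant $4$. The only caveat is that the uniform bound $\sup_{x,y}\|\nu_{x,y}\|\le 4$ is the entire non-trivial content of part (1) and you assert it rather than derive it; since that estimate is precisely what \cite{R4} and \cite{S} supply, the proposal is sound and matches the cited proof.
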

It is also proved in \cite{GM2} that if $f\in L^1(\mu)$ and $g\in L^p(\mu)$, $1\leq p<\infty$, then
\begin{equation*}
\tau_x(f\ast_{k} g)=\tau_x f\ast_{k} g=f\ast_{k} \tau_x g,x\in\mathbb{R}.
\end{equation*}
For any measurable function $f$ on $\mathbb{R}$, we define its
distribution and rearrangement functions

$$ d_{f,k}(\lambda) = \mu \left(  \left\{
\begin{array}{ll}
x \in \mathbb{R} : \vert f(x) \vert > \lambda
\end{array}
\right\} \right), $$
$$ f_{k}^{*} (\lambda) = \inf \left\{
\begin{array}{ll}
t > 0 : d_{f,k}(t) \leq \lambda
\end{array}
\right\}.  $$
For $1 \leqslant p \leqslant \infty$ and $1 \leqslant q \leqslant \infty$, define:

\begin{equation*}
\left\|f\right\|_{L^{p,\, q}(\mu)} =\left\{\begin{array}{lll}\left[\int_{0}^{\infty}(s^{\frac{1}{p}} f_{k}^{*} (s))^{q} \frac{ds}{s} \right]^{\frac{1}{q}} &\text{ si } & 1\leq p, \, q<\infty\\
\sup\limits_{s>0} s^{\frac{1}{p}} f_{k}^{*} (s) & \text{ si }&  1\leq p\leq \infty, q=\infty.
\end{array}\right.
\end{equation*}
The generalized Lorentz spaces $L^{p, \, q}(\mu)$ is defined as the set
of all measurable functions $f$ such that $\left\|f\right\|_{L^{p, \, q}(\mu)} < \infty$.(See \cite{M. G.}, \cite{Hat} ).\\
A particular case of this space is the weak Lebesgue space $L^{p,\, \infty}(\mu)$.

\begin{prop} [see \cite{M. G.} ] \label{prop 37}
	If $f \in L^{p_{1},\, q_{1}}(\mu)$, $g \in L^{p_{2}, \, q_{2}}(\mu)$ and $\frac{1}{p_{1}} + \frac{1}{p_{2}} > 1$, then $f*_{k}g \in L^{p_{3},\, q_{3}}(\mu)$ where $\frac{1}{p_{3}} +1 = \frac{1}{p_{1}} + \frac{1}{p_{2}}$ and $q_{3} \ge 1$ is any number such that  $ \frac{1}{q_{3}} \leq \frac{1}{q_{1}} + \frac{1}{q_{2}}$. \\ Moreover, $$ \Vert f*_{k}g \Vert_{L^{p_{3},\, q_{3}}(\mu)} \leq 3p_{3}\Vert f \Vert_{L^{p_{1},\, q_{1}}(\mu)} \Vert g \Vert_{L^{p_{2},\, q_{2}}(\mu)}. $$
\end{prop}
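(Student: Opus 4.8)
The plan is to reproduce O'Neil's classical argument inside the Dunkl framework, in two stages: first a pointwise rearrangement estimate for $f\ast_{k}g$, and then its conversion into the stated Lorentz bound via Hardy-type inequalities. Throughout I use the maximal rearrangement $f_{k}^{**}(t)=t^{-1}\int_{0}^{t}f_{k}^{*}(s)\,ds$, recalling that $f\mapsto f_{k}^{**}(t)$ is subadditive and $f_{k}^{*}(t)\le f_{k}^{**}(t)$. The goal of the first stage is the inequality
\begin{equation*}
(f\ast_{k}g)_{k}^{**}(t)\le C\Big[\,t\,f_{k}^{**}(t)\,g_{k}^{**}(t)+\int_{t}^{\infty}f_{k}^{*}(s)\,g_{k}^{*}(s)\,ds\,\Big],\qquad t>0,
\end{equation*}
with an absolute constant $C$.

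To obtain it, fix $t>0$ and split each factor at its rearrangement value at $t$: write $f=f^{t}+f_{t}$, where $f^{t}=\operatorname{sgn}(f)\min\{|f|,f_{k}^{*}(t)\}$ is bounded by $f_{k}^{*}(t)$ and $f_{t}$ is supported on $\{|f|>f_{k}^{*}(t)\}$ with $\|f_{t}\|_{1}=t\big(f_{k}^{**}(t)-f_{k}^{*}(t)\big)$; decompose $g=g^{t}+g_{t}$ likewise. Expanding $f\ast_{k}g$ into four products, I would bound the tall--tall term $f_{t}\ast_{k}g_{t}$ in $L^{1}(\mu)$ and the two mixed terms in $L^{\infty}(\mu)$, invoking only the endpoint cases $\|u\ast_{k}v\|_{1}\le4\|u\|_{1}\|v\|_{1}$ and $\|u\ast_{k}v\|_{\infty}\le4\|u\|_{\infty}\|v\|_{1}$ of Proposition~\ref{young}. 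Since $A_{k}^{**}(t)\le t^{-1}\|A\|_{1}$ and $B_{k}^{**}(t)\le\|B\|_{\infty}$, these three contributions, together with the norm identities above, produce the first term $t\,f_{k}^{**}(t)g_{k}^{**}(t)$; the bounded--bounded term $f^{t}\ast_{k}g^{t}$ must be handled by a further dyadic decomposition in height, each layer estimated by the $L^{\infty}$ bound, whose summation yields the tail integral $\int_{t}^{\infty}f_{k}^{*}g_{k}^{*}$.

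The main obstacle is precisely this last step, and it is where the Dunkl setting departs from the Euclidean one: because $\tau_{x}$ is \emph{not} rearrangement-preserving, the classical pointwise shortcut $\|u\ast_{k}v\|_{\infty}\le\int_{0}^{\infty}u_{k}^{*}v_{k}^{*}$ is unavailable, and a single $L^{\infty}$ estimate on $f^{t}\ast_{k}g^{t}$ would overshoot the sharp tail integral. I would therefore rely solely on the $L^{p}$--$L^{q}$--$L^{r}$ inequalities of Proposition~\ref{young} (whose factor $4$ comes from \eqref{conttranslation}), combined with the layer decomposition; keeping track of the resulting geometric series and of the accumulated constant is the technical heart of the argument.

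For the second stage, note that $1/p_{3}=1/p_{1}+1/p_{2}-1<1$, so $p_{3}>1$ and $\|f\ast_{k}g\|_{L^{p_{3},q_{3}}(\mu)}$ is equivalent to $\big\|t^{1/p_{3}}(f\ast_{k}g)_{k}^{**}(t)\big\|_{L^{q_{3}}(dt/t)}$. Inserting the first-stage bound and using the triangle inequality splits the estimate into two pieces. For the first, the identity $1+1/p_{3}=1/p_{1}+1/p_{2}$ gives $t^{1/p_{3}}\cdot t\,f_{k}^{**}(t)g_{k}^{**}(t)=\big(t^{1/p_{1}}f_{k}^{**}(t)\big)\big(t^{1/p_{2}}g_{k}^{**}(t)\big)$, so Hölder's inequality on $(0,\infty)$ with the measure $dt/t$ and exponents $q_{1},q_{2}$, followed by Hardy's inequality (which replaces $f_{k}^{**}$ by $f_{k}^{*}$ when $p_{i}>1$), controls it by $\|f\|_{L^{p_{1},q_{1}}(\mu)}\|g\|_{L^{p_{2},q_{2}}(\mu)}$. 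For the second piece, the operator $\phi\mapsto\int_{t}^{\infty}\phi$ is estimated by the dual Hardy inequality, whose norm is comparable to $p_{3}$; applied to $\phi=f_{k}^{*}g_{k}^{*}$ and followed by the same Hölder step it yields the same product, and this is where the constant $3p_{3}$ arises. Finally, the general index condition $1/q_{3}\le1/q_{1}+1/q_{2}$ reduces to the equality case through the embedding $L^{p_{3},q}(\mu)\hookrightarrow L^{p_{3},q_{3}}(\mu)$ valid for $q\le q_{3}$.
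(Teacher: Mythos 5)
First, a point of comparison: the paper gives no proof of Proposition~\ref{prop 37} at all --- it is quoted as a known result from \cite{M. G.} --- so there is no in-text argument to measure yours against. Your outline does follow the same O'Neil-type strategy as that source: a pointwise estimate for $(f\ast_{k}g)_{k}^{**}$ followed by H\"older and Hardy inequalities in the $dt/t$ variable. The second stage of your plan, and the three ``easy'' terms of the first stage, are standard and fine.

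The genuine gap is the one you flag yourself and then do not close: the bound of the bounded--bounded term $f^{t}\ast_{k}g^{t}$ by $t f_{k}^{*}(t)g_{k}^{*}(t)+\int_{t}^{\infty}f_{k}^{*}g_{k}^{*}\,ds$. Your proposed substitute --- dyadic decomposition in height of both factors with each block estimated by the endpoint cases of Proposition~\ref{young} --- does not obviously produce the tail integral: using $\Vert f_{j}\ast_{k}g_{i}\Vert_{\infty}\le 4\min\{\Vert f_{j}\Vert_{1}\Vert g_{i}\Vert_{\infty},\Vert f_{j}\Vert_{\infty}\Vert g_{i}\Vert_{1}\}$ and summing over $i,j$ leaves, for each fixed $l=\min(i,j)$, one-sided tails of the form $\sum_{j\ge l}f_{k}^{*}(2^{j}t)$ in which the product structure $f_{k}^{*}g_{k}^{*}$ at matched scales is lost; these are not dominated by $\int_{t}^{\infty}f_{k}^{*}g_{k}^{*}\,ds$. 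So ``keeping track of the geometric series'' is precisely the missing step, not routine bookkeeping. The good news is that your reason for abandoning the classical shortcut is too pessimistic: although $\tau_{x}$ is not positivity-preserving, \eqref{conttranslation} with $p=1$ and $p=\infty$ gives $\Vert\chi_{E}\ast_{k}\chi_{F}\Vert_{\infty}\le 4\min\{\mu(E),\mu(F)\}$ for all measurable $E,F$ of finite measure; splitting $u=u_{+}-u_{-}$, writing $u_{\pm}=\int_{0}^{\infty}\chi_{\{u_{\pm}>\lambda\}}\,d\lambda$ (likewise for $v$) and using bilinearity of $\ast_{k}$ then yields $\Vert u\ast_{k}v\Vert_{\infty}\le C\int_{0}^{\infty}u_{k}^{*}v_{k}^{*}\,ds$ with an absolute constant, which is exactly what the bounded--bounded term requires. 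With that lemma supplied, your two-stage argument goes through. One caveat: every step inherits the factor $4$ from \eqref{conttranslation}, so this route gives $\Vert f\ast_{k}g\Vert_{L^{p_{3},q_{3}}(\mu)}\le Cp_{3}\Vert f\Vert_{L^{p_{1},q_{1}}(\mu)}\Vert g\Vert_{L^{p_{2},q_{2}}(\mu)}$ with some absolute $C$ larger than $3$; you cannot claim O'Neil's literal constant $3p_{3}$. That is harmless for the only use the paper makes of the proposition (Proposition~\ref{prop 38}), but it should be stated honestly.
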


\section{Preliminaries results on the Dunkl-Wiener amalgam spaces}
The propositions of this section are proved in \cite{NF}. \\
H\"older inequality remains valid in the Dunkl-Wiener amalgam spaces as stated below.
\begin{prop}  \label{prop 4}
	Let $1\leq q_{1} ,\,  p_{1}\leq\infty$ and $1\leq q_{2} ,\,  p_{2}\leq \infty$ such that
	$$\frac{1}{q_{1}} + \frac{1}{q_{2}} = \frac{1}{q} \leq 1\text{ and }\frac{1}{p_{1}} + \frac{1}{p_{2}} = \frac{1}{p} \leq 1.$$
	If $f\in(L^{q_{1}}, \, L^{p_{1}}) (\mu)$ and $g\in(L^{q_{2}}, \, L^{p_{2}})(\mu) $ then $fg\in(L^{q}, \, L^{p}) (\mu)$.\\
 Moreover,
	$$\left\| fg \right\|_{q, \, p} \leq \left\| f \right\|_{q_{1}, \, p_{1}}  \left\| g \right\|_{q_{2}, \, p_{2}}.$$
\end{prop}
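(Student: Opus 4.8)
The plan is to reduce the amalgam-norm inequality to a pointwise (in the outer variable $y$) inequality between the inner local quantities, and then to invoke the classical Hölder inequality in $L^p(\mu)$. For $h\in L^0(\mu)$ and $y\in\mathbb R$ set
\[
T_y(h)=\int_{\mathbb R}\tau_y h(x)\,\chi_{B_1}(x)\,d\mu(x),
\]
so that, when $q<\infty$, one has $\|f\|_{q,p}=\big\|\,T_{(\cdot)}(|f|^q)^{1/q}\,\big\|_p$, the $L^p(\mu)$-norm being taken in $y$. Since $\frac{1}{q_1}+\frac{1}{q_2}=\frac1q$, the exponents $s=\frac{q_1}{q}$ and $s'=\frac{q_2}{q}$ are conjugate, and the heart of the argument is the pointwise estimate
\[
T_y(|fg|^q)^{1/q}\le T_y(|f|^{q_1})^{1/q_1}\,T_y(|g|^{q_2})^{1/q_2},\qquad y\in\mathbb R.
\]

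The main point is the observation that circumvents the chief difficulty, namely that the Dunkl translation is \emph{not} multiplicative, so the Euclidean-type identity $\tau_y(|fg|^q)=\tau_y(|f|^q)\,\tau_y(|g|^q)$ fails and the naive amalgam computation breaks down. Instead I would use that on the real line $\tau_y$ preserves nonnegativity (the property already underlying the left inequality in \eqref{**}), so that for each fixed $y$ the map $h\mapsto T_y(h)$ is a \emph{positive linear functional} on $L^0(\mu)$. For any positive linear functional the Hölder inequality holds, by the classical Young-inequality argument: with $U=|f|^q$ and $V=|g|^q$ we have $U^s=|f|^{q_1}$, $V^{s'}=|g|^{q_2}$ and $UV=|fg|^q$, and from $ab\le\frac{a^s}{s}+\frac{b^{s'}}{s'}$ together with the linearity and positivity of $T_y$ one deduces, after normalizing so that $T_y(U^s)=T_y(V^{s'})=1$ (the degenerate cases handled by the scaling $ab=(\lambda a)(\lambda^{-1}b)$, $\lambda\to\infty$),
\[
T_y(|fg|^q)\le T_y(|f|^{q_1})^{1/s}\,T_y(|g|^{q_2})^{1/s'}.
\]
Taking $q$-th roots and recalling $\frac1s=\frac{q}{q_1}$, $\frac{1}{s'}=\frac{q}{q_2}$ yields the pointwise estimate. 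Note that this functional Hölder inequality is valid for the whole range of conjugate exponents, including $s$ or $s'$ infinite, which disposes of the subcase $q<\infty$ with $q_1$ or $q_2$ equal to $\infty$.

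Finally I would lift the pointwise inequality to the level of $L^p$-norms in $y$. Since $\frac{1}{p_1}+\frac{1}{p_2}=\frac1p$, the classical Hölder inequality in $L^p(\mu)$ gives
\[
\|fg\|_{q,p}=\big\|\,T_{(\cdot)}(|fg|^q)^{1/q}\,\big\|_p\le\big\|\,T_{(\cdot)}(|f|^{q_1})^{1/q_1}\,T_{(\cdot)}(|g|^{q_2})^{1/q_2}\,\big\|_p\le\|f\|_{q_1,p_1}\,\|g\|_{q_2,p_2},
\]
which simultaneously shows $fg\in(L^q,L^p)(\mu)$ whenever the right-hand side is finite. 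The remaining endpoints are easier and handled separately: when $q=\infty$ (forcing $q_1=q_2=\infty$) the inner quantity is $\|f\chi_{B(y,1)}\|_\infty$ and the pointwise inequality reduces to submultiplicativity of the essential supremum on $B(y,1)$, while when $p=\infty$ the outer step uses the supremum form of Hölder. I expect the only genuine obstacle to be precisely the failure of multiplicativity of $\tau_y$; the whole proof hinges on replacing the pointwise product identity by the recasting of the inner integral as a positive linear functional and the abstract Hölder inequality for such functionals.
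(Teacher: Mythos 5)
The paper does not actually prove this proposition; it defers it to \cite{NF} (``The propositions of this section are proved in \cite{NF}''), so there is no internal proof to compare against. Your architecture --- an inner H\"older inequality applied to the quantity $T_y(h)=\int_{\mathbb R}(\tau_y h)\chi_{B_1}\,d\mu$ for each fixed $y$, followed by an outer H\"older inequality in $L^p(\mu)$ over $y$ --- is the natural and, as far as one can tell, the intended argument, and the exponent bookkeeping and the endpoint cases $q=\infty$, $p=\infty$ are handled correctly.

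The one point that needs repair is your justification of the positivity of $T_y$. You assert that ``$\tau_y$ preserves nonnegativity'' and present the left inequality in (\ref{**}) as an instance of this. That general claim is false: the rank-one Dunkl translation is associated with a \emph{signed} hypergroup (R\"osler \cite{R4}), and $\tau_y h$ can take negative values for nonnegative $h$; positivity of $\tau_y$ holds only on even (radial) functions, and (\ref{**}) records precisely that special fact for $\chi_{B_r}$. As stated, your lemma ``$T_y$ is a positive linear functional because $\tau_y\ge 0$'' rests on a false premise. The conclusion you need is nevertheless true, and the fix is one line: by the adjoint relation $\int_{\mathbb R}(\tau_y h)\chi_{B_1}\,d\mu=\int_{\mathbb R}h\,\tau_{-y}\chi_{B_1}\,d\mu$ (used repeatedly in the paper, e.g.\ in the proof of Proposition \ref{prop 3}), your functional $T_y$ is integration against the kernel $\tau_{-y}\chi_{B_1}$, which is nonnegative and supported in $B(y,1)$ by (\ref{**}). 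Thus $T_y(h)=\int h\,d\nu_y$ for the genuine positive measure $d\nu_y=\tau_{-y}\chi_{B_1}\,d\mu$, and your inner estimate is just the classical H\"older inequality for $\nu_y$ --- no abstract positive-functional version is needed, and the subcases $q_1=\infty$ or $q_2=\infty$ become transparent since $\|\,|g|^{q}\|_{L^\infty(\nu_y)}\le\|g\chi_{B(y,1)}\|_\infty^{q}$. With that substitution the proof is complete.
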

The following result shows that $L^{s}(\mu)\subset (L^{q}, \, L^{p})(\mu)$ when $1\leq q\leq s\leq p\leq\infty$.
\begin{prop}  \label{inclaqp}Let $1\leq q\leq s\leq p\leq\infty$. We have
\begin{equation}
\left\|f\right\|_{q, \, p}\leq  4^{\frac{1}{q}}\left\|f\right\|_{s}\mu(B_1)^{\frac{1}{p}-\frac{1}{s}+\frac{1}{q}}.
\end{equation}
\end{prop}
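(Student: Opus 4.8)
The plan is to recognize the local integral defining $ _{1}\Vert f\Vert_{q,p}$ as a Dunkl convolution and then to invoke the Young inequality of Proposition~\ref{young}(2). I first treat the generic case $q\le s\le p$ with $q<\infty$ and $p<\infty$. Setting $g=\vert f\vert^{q}\ge 0$, the inner quantity is
\begin{equation*}
F(y)^{q}:=\int_{\mathbb R}\tau_{y}g(x)\,\chi_{B_{1}}(x)\,d\mu(x).
\end{equation*}
Since the density of $\mu$ and the function $\chi_{B_{1}}$ are both even, the change of variable $x\mapsto -x$ applied to the definition $g\ast_{k}\chi_{B_{1}}(y)=\int_{\mathbb R}\tau_{y}g(-x)\,\chi_{B_{1}}(x)\,d\mu(x)$ yields the pointwise identity $F(y)^{q}=g\ast_{k}\chi_{B_{1}}(y)$ for $\mu$-a.e.\ $y$. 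Consequently $\Vert f\Vert_{q,p}=\Vert F\Vert_{p}=\Vert g\ast_{k}\chi_{B_{1}}\Vert_{p/q}^{1/q}$, and the problem reduces to estimating a single convolution norm.

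Next I would apply Proposition~\ref{young}(2) to $g\ast_{k}\chi_{B_{1}}$ with source exponents $s/q$ and $b$ and target exponent $p/q$, where $b$ is fixed by the admissibility relation $\tfrac{q}{s}+\tfrac{1}{b}=1+\tfrac{q}{p}$, that is $\tfrac{1}{b}=1+\tfrac{q}{p}-\tfrac{q}{s}$. The hypothesis $q\le s\le p$ guarantees $s/q\ge 1$, $p/q\ge 1$ and $1\le b\le\infty$, so Proposition~\ref{young} applies and, using $\Vert\chi_{B_{1}}\Vert_{b}=\mu(B_{1})^{1/b}$ and $\Vert g\Vert_{s/q}=\Vert f\Vert_{s}^{q}$, gives
\begin{equation*}
\Vert g\ast_{k}\chi_{B_{1}}\Vert_{p/q}\le 4\,\Vert g\Vert_{s/q}\,\Vert\chi_{B_{1}}\Vert_{b}=4\,\Vert f\Vert_{s}^{\,q}\,\mu(B_{1})^{1/b}.
\end{equation*}
Raising to the power $1/q$ produces the constant $4^{1/q}$ and the factor $\Vert f\Vert_{s}$, while the measure exponent becomes $\tfrac{1}{bq}=\tfrac{1}{q}\bigl(1+\tfrac{q}{p}-\tfrac{q}{s}\bigr)=\tfrac{1}{p}-\tfrac{1}{s}+\tfrac{1}{q}$, which is exactly the exponent asserted in the statement. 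This closes the generic case.

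Finally I would dispose of the limiting exponents. When $q=\infty$ the chain $q\le s\le p$ forces $s=p=\infty$, and the claim reduces to $\Vert f\Vert_{\infty,\infty}\le\Vert f\Vert_{\infty}$ (since $\mu(B_{1})^{0}=1$), which is immediate from $\Vert f\chi_{B(y,1)}\Vert_{\infty}\le\Vert f\Vert_{\infty}$. When $p=\infty$ or $s=\infty$ with $q<\infty$, the same convolution identity holds and one uses the endpoint form of Proposition~\ref{young}: the relation $\tfrac{q}{s}+\tfrac{1}{b}=1$ (with $g\in L^{\infty}(\mu)$, $\chi_{B_{1}}\in L^{1}(\mu)$ in the case $s=\infty$) gives $\Vert g\ast_{k}\chi_{B_{1}}\Vert_{\infty}\le 4\Vert g\Vert_{s/q}\Vert\chi_{B_{1}}\Vert_{b}$, and the identical exponent computation yields $\tfrac{1}{bq}=\tfrac{1}{q}-\tfrac{1}{s}=\tfrac{1}{p}-\tfrac{1}{s}+\tfrac{1}{q}$. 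The step that requires genuine care, and hence the main obstacle, is the justification of the identity $F(y)^{q}=g\ast_{k}\chi_{B_{1}}(y)$, since $\ast_{k}$ is defined a priori only through its $C_{c}$-extension; the evenness of $\mu$ and $\chi_{B_{1}}$ together with the positivity of $g$ (allowing a monotone approximation by compactly supported functions) are what make this rigorous, after which everything is routine bookkeeping of the Hölder–Young exponents.
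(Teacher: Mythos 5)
Your proof is correct: the identification $\Vert f\Vert_{q,p}^{q}=\Vert\,|f|^{q}*_{k}\chi_{B_{1}}\Vert_{L^{p/q}(\mu)}$ followed by Soltani's Young inequality (Proposition \ref{young}) is exactly the mechanism this paper relies on --- it is invoked verbatim in the proof of the parallel Proposition \ref{prop 38}, the actual proof of the present statement being deferred to \cite{NF}, and the constant $4^{1/q}$ is precisely the Young constant raised to the power $1/q$. The exponent bookkeeping ($\tfrac{1}{bq}=\tfrac{1}{p}-\tfrac{1}{s}+\tfrac{1}{q}$) and the endpoint cases are all handled correctly.
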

The family of spaces
$(L^{q}, \, L^{p})(\mu)$ is decreasing with respect to the $q$-power. More precisely, we have the following result.
\begin{prop}\label{include1}
Let $1\leq q_1\leq q_2 \leq\infty$ and $1\leq p\leq\infty$. For any locally $\mu$-integrable function $f$, we have
\begin{equation}\left\|f\right\|_{q_1,p}\leq\mu(B_1)^{\frac{1}{q_1}-\frac{1}{q_2}}\left\|f\right\|_{q_2,p}.
\end{equation}
\end{prop}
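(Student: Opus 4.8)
The plan is to prove the estimate pointwise in the outer variable $y$ and then take $L^p(\mu)$-norms. Fix $y\in\mathbb R$ and, for $1\le q<\infty$, abbreviate
\[
F_q(y)=\left[\int_{\mathbb R}\tau_y(|f|^q)(x)\,\chi_{B_1}(x)\,d\mu(x)\right]^{1/q},
\]
so that $\|f\|_{q,p}=\|F_q\|_p$. Since the $L^p(\mu)$-norm is monotone and homogeneous, the claim reduces to the pointwise bound $F_{q_1}(y)\le\mu(B_1)^{1/q_1-1/q_2}F_{q_2}(y)$; applying $\|\cdot\|_p$ in $y$ then yields the proposition.

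The central step transfers the Dunkl translation from $|f|^q$ onto $\chi_{B_1}$. Using that $\mathcal F_k$ is an $L^2(\mu)$-isometry (Proposition \ref{theo 1}) and that $\tau_y$ acts as the Fourier multiplier $\E_k(iy\cdot)$ (Proposition \ref{prop 1}), whose complex conjugate is $\E_k(-iy\cdot)$, one gets $\tau_y^{\ast}=\tau_{-y}$ on $L^2(\mu)$; hence
\[
\int_{\mathbb R}\tau_y(|f|^q)(x)\chi_{B_1}(x)\,d\mu(x)=\int_{\mathbb R}|f(x)|^q\,\tau_{-y}\chi_{B_1}(x)\,d\mu(x).
\]
I would then regard $d\omega_y:=\tau_{-y}\chi_{B_1}\,d\mu$ as a measure: by \eqref{**} it is nonnegative, and because the Dunkl translation preserves the integral (equivalently $\tau_y 1=1$), its total mass is $\omega_y(\mathbb R)=\int_{\mathbb R}\chi_{B_1}\,d\mu=\mu(B_1)$. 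Thus $F_q(y)^q=\int_{\mathbb R}|f|^q\,d\omega_y$ for every admissible $q$, with the same finite measure $\omega_y$ independent of $q$.

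With this representation, the estimate is merely Hölder's inequality on $\omega_y$. For $1\le q_1\le q_2<\infty$, Hölder with exponent $q_2/q_1$ and its conjugate gives
\[
\int_{\mathbb R}|f|^{q_1}\,d\omega_y\le\left(\int_{\mathbb R}|f|^{q_2}\,d\omega_y\right)^{q_1/q_2}\omega_y(\mathbb R)^{1-q_1/q_2}=F_{q_2}(y)^{q_1}\,\mu(B_1)^{1-q_1/q_2},
\]
and raising to the power $1/q_1$ yields exactly $F_{q_1}(y)\le\mu(B_1)^{1/q_1-1/q_2}F_{q_2}(y)$. For the endpoint $q_2=\infty$, where the inner quantity is $\|f\chi_{B(y,1)}\|_\infty$, I would again use the $\omega_y$-representation together with the fact that $x\mapsto\tau_{-y}\chi_{B_1}(x)$ is supported in $B(-y,1)=B(y,1)$ (the support property of the translated characteristic function, and $B(-y,1)=B(y,1)$ because $B(\cdot,r)$ depends only on the modulus); this gives $\int|f|^{q_1}\,d\omega_y\le\|f\chi_{B(y,1)}\|_\infty^{q_1}\mu(B_1)$, hence $\|f\|_{q_1,p}\le\mu(B_1)^{1/q_1}\|f\|_{\infty,p}$, which is the asserted inequality since $1/q_1-1/\infty=1/q_1$.

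The only genuinely delicate points are the justification of the duality identity $\tau_y^{\ast}=\tau_{-y}$ together with the mass computation $\omega_y(\mathbb R)=\mu(B_1)$, which must be argued with some care (e.g. by approximation) since the constant function $1$ does not lie in $L^2(\mu)$, and the support localization of $\tau_{-y}\chi_{B_1}$ used in the endpoint case. Once the positive measure $\omega_y$ of total mass $\mu(B_1)$ has been identified, the rest is a one-line application of Hölder, so this transfer of the translation onto $\chi_{B_1}$ is the crux of the argument.
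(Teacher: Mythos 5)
Your proof is correct and is essentially the standard argument: the present paper does not reprove Proposition \ref{include1} (Section 3 defers to \cite{NF}), but the self-adjointness transfer $\int_{\mathbb R}\tau_y(|f|^q)\chi_{B_1}\,d\mu=\int_{\mathbb R}|f|^q\,\tau_{-y}\chi_{B_1}\,d\mu$ followed by H\"older's inequality against the nonnegative measure $\tau_{-y}\chi_{B_1}\,d\mu$ of total mass $\mu(B_1)$ is exactly the identity this paper itself exploits in the proof of Proposition \ref{prop 3}. The points you flag as delicate --- nonnegativity and the support of $\tau_{-y}\chi_{B_1}$ in $B(y,1)$ for the $q_2=\infty$ endpoint, and the mass computation --- are precisely the facts recorded in (\ref{**}) and in the computation $\int_{\mathbb R}\tau_x\chi_{B_1}(y)\,d\mu(y)=\mu(B_1)$ used for Proposition \ref{prop 3}, so no genuine gap remains.
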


\section{New properties of the spaces $ (L^{p},\, L^{p})(\mu)$ and $(L^{q},\, L^{p})^{\alpha}(\mu)$}
For $(x,\, r) \in \mathbb{R} \times (0,\, \infty)$ and for  $\delta >0$,  we set
$$ I(x,\, r)=(x-r, \, x+r)\;\;\text{ and } \;\; \delta I=I(x,\, \delta r).$$
We will examine the relationship between the Dunkl-Wiener amalgam spaces and other topological vector spaces.
\begin{prop} \label{prop 3}
Let $1\leq p\leq \infty$. Then
$$\left\|f\right\|_{p,\, p} = \mu(B_1)^{\frac{1}{p}} \left\|f\right\|_{p}$$
and consequently, $(L^{p},\, L^{p})(\mu)=L^{p}(\mu)$.
\end{prop}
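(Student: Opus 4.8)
The plan is to evaluate $\Vert f\Vert_{p,p}=\,_{1}\Vert f\Vert_{p,p}$ directly from its definition (taking $q=p$ and $r=1$) and to reduce it to $\Vert f\Vert_{p}$ by exploiting two structural features of the Dunkl translation: its symmetry $\tau_{y}g(x)=\tau_{x}g(y)$, and the fact that it preserves the measure $\mu$, in the sense that $\int_{\mathbb{R}}\tau_{x}g\,d\mu=\int_{\mathbb{R}}g\,d\mu$ for every $g\in L^{1}(\mu)$. Since for $f\in L^{p}(\mu)$ one has $\vert f\vert^{p}\in L^{1}(\mu)$, both facts will be available with $g=\vert f\vert^{p}$.

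For $1\le p<\infty$, unravelling the definition and raising to the $p$-th power gives
\[
\Vert f\Vert_{p,p}^{\,p}=\int_{\mathbb{R}}\int_{\mathbb{R}}\tau_{y}(\vert f\vert^{p})(x)\,\chi_{B_{1}}(x)\,d\mu(x)\,d\mu(y).
\]
Because the integrand is nonnegative and jointly measurable in $(x,y)$, Tonelli's theorem lets me interchange the two integrations, so that the outer integral runs over $B_{1}$ and the inner one over the translation parameter $y$:
\[
\Vert f\Vert_{p,p}^{\,p}=\int_{B_{1}}\left(\int_{\mathbb{R}}\tau_{y}(\vert f\vert^{p})(x)\,d\mu(y)\right)d\mu(x).
\]
Applying the symmetry $\tau_{y}(\vert f\vert^{p})(x)=\tau_{x}(\vert f\vert^{p})(y)$ and then the measure-preservation identity, the inner integral equals $\int_{\mathbb{R}}\vert f\vert^{p}\,d\mu=\Vert f\Vert_{p}^{\,p}$ for each fixed $x$. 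Substituting back leaves $\int_{B_{1}}\Vert f\Vert_{p}^{\,p}\,d\mu(x)=\mu(B_{1})\Vert f\Vert_{p}^{\,p}$, whence $\Vert f\Vert_{p,p}=\mu(B_{1})^{1/p}\Vert f\Vert_{p}$.

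The one point that needs justification is the measure-preservation identity, which I would read off from the Fourier side. Evaluating the relation $\mathcal{F}_{k}(\tau_{x}g)(\lambda)=\E_{k}(ix\lambda)\mathcal{F}_{k}(g)(\lambda)$ of Proposition \ref{prop 1} at $\lambda=0$ and using $\E_{k}(0)=1$ together with $\mathcal{F}_{k}(h)(0)=\int_{\mathbb{R}}h\,d\mu$ yields $\int_{\mathbb{R}}\tau_{x}g\,d\mu=\int_{\mathbb{R}}g\,d\mu$ for $g\in L^{1}(\mu)$; the symmetry, stated on $\mathcal{E}(\mathbb{R})$, extends to $L^{1}(\mu)$ by density. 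This identity is the main (and essentially the only nontrivial) obstacle, the remaining steps being Tonelli's theorem and, in the endpoint case, a covering argument.

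For $p=\infty$ I use the $q=\infty$ branch of the definition, so that $\Vert f\Vert_{\infty,\infty}$ is the essential supremum over $y$ of $\Vert f\chi_{B(y,1)}\Vert_{\infty}$. The bound $\Vert f\Vert_{\infty,\infty}\le\Vert f\Vert_{\infty}$ is immediate, and the reverse inequality follows from $\bigcup_{y}B(y,1)=\mathbb{R}$: for each $\varepsilon>0$ the set $\{\vert f\vert>\Vert f\Vert_{\infty}-\varepsilon\}$ has positive $\mu$-measure and therefore meets $B(y,1)$ in positive measure for $y$ ranging over a set of positive measure, forcing the essential supremum in $y$ to be at least $\Vert f\Vert_{\infty}-\varepsilon$. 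Since $\mu(B_{1})^{1/\infty}=1$, this gives $\Vert f\Vert_{\infty,\infty}=\Vert f\Vert_{\infty}$. In every case the map $f\mapsto\mu(B_{1})^{1/p}\Vert f\Vert_{p}$ is finite precisely when $\Vert f\Vert_{p}$ is, which yields the identification $(L^{p},L^{p})(\mu)=L^{p}(\mu)$.
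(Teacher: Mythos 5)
Your proof is correct and follows essentially the same route as the paper's: for $p<\infty$ both arguments interchange the two integrations by Tonelli and then use the translation-invariance of the integral, $\int_{\mathbb{R}}\tau_{x}g\,d\mu=\int_{\mathbb{R}}g\,d\mu$, to pull out the factor $\mu(B_{1})$ (the paper phrases this by moving the translation onto $\chi_{B_{1}}$ and computing $\int_{\mathbb{R}}\tau_{x}\chi_{B_{1}}(y)\,d\mu(y)=\mu(B_{1})$, while you keep it on $\vert f\vert^{p}$ and justify the invariance via $\mathcal{F}_{k}$ at $\lambda=0$ --- a cosmetic difference). The only divergence is that the paper simply cites \cite{NF} for $p=\infty$ whereas you supply a direct covering argument, which is a welcome but not essential addition.
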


\begin{proof}
The case where $p= \infty$ is proved in \cite{NF}. So we suppose that $p< \infty$. We have
\begin{eqnarray*}
	\Vert f \Vert_{p,\, p}
	&=& \left[ \displaystyle \int_\mathbb{R} \displaystyle \int_\mathbb{R}   \vert f \vert^{p} (x) \tau_{-y} \chi_{B(0,1)} (x)  d\mu (x) d\mu(y) \right]^{\frac{1}{p}} \\
	&=& \left[ \displaystyle \int_\mathbb{R} \left[ \displaystyle \int_\mathbb{R} \tau_{x} \chi_{B(0,1)} (y) d\mu(y) \right]  \vert f \vert^{p} (x)   d\mu (x) \right]^{\frac{1}{p}} \\
	&=& (\mu(B_1))^{\frac{1}{p}} \left[ \displaystyle \int_\mathbb{R}  \vert f \vert^{p} (x)   d\mu (x) \right]^{\frac{1}{p}}.
\end{eqnarray*}
\end{proof}
Proposition \ref{include1}  and Proposition \ref{prop 3} yield the following remark.
\begin{remark}
For $1\leq p \leq q \leq \infty$, we have $ (L^{q},\, L^{p})(\mu) \subset L^{p}(\mu)$.
\label{r1}
\end{remark}
Let us denote by $\mathcal{S}(\mathbb{R})$ the Schwartz space. We endow it with the topology induced by the family of semi-norms given by
$$ \mathcal{N}_{m}(f) =\sup\limits_{x \in \mathbb{R}} (1 + \vert x \vert)^{m} \sum_{n\leq m}  \left\vert \dfrac{d^{(n)}}{dx^{n}} f(x) \right\vert, \, \,\,\,\,\,\,\,\:f \in \mathcal{S}(\mathbb{R}), $$
where $m$ and $n$ are non-negative integers.

\begin{prop}
Let $1 \leq q, \, p \leq \infty$.  Then	$\mathcal{S}(\mathbb{R})$ is continuously embedded in $(L^{q}, L^{p})(\mu)$.
\end{prop}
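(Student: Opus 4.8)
The plan is to establish the two ingredients of a continuous embedding separately: that every Schwartz function belongs to $(L^{q},L^{p})(\mu)$, and that the inclusion map is continuous. Since this map is linear and $\mathcal{S}(\mathbb{R})$ carries the Fréchet topology generated by the seminorms $\mathcal{N}_{m}$, continuity will follow at once if I produce a single index $m$ and a constant $C$ with $\Vert f\Vert_{q,p}\le C\,\mathcal{N}_{m}(f)$ for all $f\in\mathcal{S}(\mathbb{R})$; in particular such an estimate also forces finiteness of the norm, so both claims reduce to one uniform inequality.

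The core of the argument is a pointwise-in-$y$ bound on the inner quantity defining $\Vert\cdot\Vert_{q,p}$. Assume first $q<\infty$. Using Proposition \ref{p2} with the present normalization $r=1$, I replace the Dunkl translation by an ordinary restriction,
$$\int_{\mathbb{R}}\tau_{y}(|f|^{q})(x)\chi_{B_{1}}(x)\,d\mu(x)\le C_{2}\int_{I(y,1)}|f|^{q}(x)\,d\mu(x).$$
For a Schwartz function the elementary decay estimate $|f(x)|\le \mathcal{N}_{m}(f)(1+|x|)^{-m}$ holds for every $m$. Inserting this, and using that on $I(y,1)$ one has $|x|\le 1+|y|$ and $1+|x|\ge c(1+|y|)$ uniformly, together with $d\mu(x)$ comparable to $|x|^{2k+1}dx$ and $2k+1>0$, I obtain a bound of the form
$$\int_{I(y,1)}|f|^{q}\,d\mu\le C\,\mathcal{N}_{m}(f)^{q}\,(1+|y|)^{2k+1-mq},$$
uniformly in $y$. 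Taking $q$-th roots yields the key pointwise estimate
$$\left[\int_{\mathbb{R}}\tau_{y}(|f|^{q})\chi_{B_{1}}\,d\mu\right]^{1/q}\le C\,\mathcal{N}_{m}(f)\,(1+|y|)^{\frac{2k+1}{q}-m}.$$

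It then remains to take the $L^{p}(\mu)$-norm in $y$. For $p<\infty$ this produces the integral $\int_{\mathbb{R}}(1+|y|)^{p\left(\frac{2k+1}{q}-m\right)}\,d\mu(y)$, whose integrand behaves like $|y|^{2k+1}$ near the origin, which is integrable since $2k+1>0$, and like $|y|^{2k+1+p\left(\frac{2k+1}{q}-m\right)}$ at infinity; choosing $m$ large enough drives the latter exponent below $-1$, so the integral is finite and $\Vert f\Vert_{q,p}\le C\,\mathcal{N}_{m}(f)$. For $p=\infty$ one instead takes a supremum in $y$, which is finite once $\frac{2k+1}{q}-m<0$. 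The cases with $q=\infty$ are handled identically, replacing the inner integral by $\Vert f\chi_{B(y,1)}\Vert_{\infty}\le C\,\mathcal{N}_{m}(f)(1+|y|)^{-m}$, using that $x\in B(y,1)$ forces $1+|x|\ge c(1+|y|)$.

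The step I expect to be the main obstacle is the passage from the Dunkl translation $\tau_{y}$ to an ordinary restriction, since $\tau_{y}$ has no simple explicit kernel and the naive change of variables is unavailable; this is precisely where Proposition \ref{p2} is indispensable, and it is also the place where one must ensure that all constants, as well as the weight factor $(1+|y|)^{2k+1}$ coming from $d\mu$, are controlled uniformly in $y$. Once that comparison is secured, the remainder is a routine decay-versus-weight integrability check, and the fact that the whole estimate is governed by a single seminorm $\mathcal{N}_{m}$ delivers the continuity of the embedding for free.
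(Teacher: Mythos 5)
Your argument is correct, and its skeleton (Schwartz decay $|f(x)|\le \mathcal{N}_m(f)(1+|x|)^{-m}$, the uniform comparison $1+|x|\simeq 1+|y|$ on the set where the translated indicator lives, then integration in $y$ against the weight) is the same as the paper's; the genuine difference is in how you dispose of the Dunkl translation when $q<\infty$. You invoke Proposition \ref{p2} to replace $\int_{\mathbb{R}}\tau_{y}(|f|^{q})\chi_{B_1}\,d\mu$ by $\int_{I(y,1)}|f|^{q}\,d\mu$, and then you must pay for $\mu(I(y,1))\lesssim (1+|y|)^{2k+1}$, ending up with the bound $C\,\mathcal{N}_m(f)(1+|y|)^{\frac{2k+1}{q}-m}$ and the requirement $m>\frac{2k+1}{q}+\frac{2k+2}{p}$. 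The paper instead keeps $\tau_{-y}\chi_{B_1}$, uses only that it is supported in $B(y,1)$ (where $\frac14(1+|x|)\le 1+|y|\le 4(1+|x|)$) together with the mass identity $\int_{\mathbb{R}}\tau_{-y}\chi_{B_1}\,d\mu=\mu(B_1)$ from \cite[Theorem 6.3.4]{DX}; this yields the cleaner bound $4^m\mathcal{N}_m(f)(1+|y|)^{-m}\mu(B_1)^{1/q}$ with no growth factor, so any $m>\frac{2k+2}{p}$ suffices. Since for the continuity of the embedding of the Fréchet space $\mathcal{S}(\mathbb{R})$ one only needs \emph{some} seminorm $\mathcal{N}_m$ to dominate $\Vert\cdot\Vert_{q,p}$, the weaker threshold in your version is immaterial, and your treatment of the case $q=\infty$ coincides with the paper's. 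Both routes are sound; yours trades the sharper constant for not having to know anything about $\tau_{-y}\chi_{B_1}$ beyond the two-sided comparison of Proposition \ref{p2}.
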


\begin{proof}
	Let $m$ be a non-negative integer and $f \in \mathcal{S}(\mathbb{R})$. We have
	
	\begin{eqnarray}\label{rel 15}
	\vert f(x) \vert \leq (1 + \vert x \vert)^{-m} \mathcal{N}_{m}(f), \quad x \in \mathbb{R}
	\end{eqnarray}
$\bullet$ Suppose that $q<\infty$.\\
 Let $y \in \mathbb{R}$. It follows from (\ref{rel 15}) that
		 $$ \vert f(x) \vert \tau^{\frac{1}{q}}_{-y}\chi_{B_1}(x) \leq (1 + \vert x \vert)^{-m} \mathcal{N}_{m}(f)\tau^{\frac{1}{q}}_{-y}\chi_{B_1}(x), \;\;\;\;\;\;\;x \in \mathbb{R}. $$
Therefore,
		\begin{eqnarray} \label{rel 0}
		\Vert f  \tau^{\frac{1}{q}}_{-y}\chi_{B_1} \Vert_{q}  \leq \mathcal{N}_{m}(f) \Vert (1 + \vert  \cdot \vert)^{-m} \tau^{\frac{1}{q}}_{-y}\chi_{B_1} \Vert_{q}
		\end{eqnarray}
Since
		\begin{equation}\label{rel 14}
		\frac{1}{4}(1+ \vert x \vert) \leq 1+ \vert y \vert \leq 4(1+ \vert x \vert),\;\;\;\;\;\;\;\;\; y \in \mathbb{R},\;x \in B(y,1),
		\end{equation}
		We get from (\ref{rel 0}) that
			\begin{eqnarray*}
				\Vert f  \tau^{\frac{1}{q}}_{-y}\chi_{B_1} \Vert_{q}  &\leq& 4^{m}\mathcal{N}_{m}(f) (1 + \vert  y \vert)^{-m} \Vert  \tau^{\frac{1}{q}}_{-y}\chi_{B_1} \Vert_{q} \\
				&\leq& 4^{m}\mathcal{N}_{m}(f) (1 + \vert  y \vert)^{-m} [\mu(B_{1})]^{\frac{1}{q}}.
			\end{eqnarray*}
			according to \cite[Theorem 6.3.4]{DX}.
			Hence, for all $m> \frac{2k+2}{p}$,
			$$\Vert f \Vert_{q,p} = \Vert \Vert f  \tau^{\frac{1}{q}}_{(- \cdot)}\chi_{B_1} \Vert_{q} \Vert_{p} \leq
			4^{m}\mathcal{N}_{m}(f) [\mu(B_{1})]^{\frac{1}{q}} \Vert  (1 + \vert \cdot \vert)^{-m}\Vert_{p} < \infty.$$
Therefore, for all $m> \frac{2k+2}{p}$,
$$\Vert f \Vert_{q,p} \leq C(q,m,k,p) \mathcal{N}_{m}(f).$$
$\bullet$ Suppose that $q= \infty$. \\
Let $y \in \mathbb{R}$. From (\ref{rel 15}) we have	
		$$\vert f(x) \chi_{B(y,1)}(x)\vert \leq (1 + \vert x \vert)^{-m} \mathcal{N}_{m}(f)\chi_{B(y,1)}(x),\,\,\,\,\,\, x \in \mathbb{R}.$$
It follows from (\ref{rel 14}) that
		\begin{eqnarray*}
			\Vert f\chi_{B(y,1)}\Vert_{\infty} &\leq& \mathcal{N}_{m}(f) \Vert (1 + \vert \cdot \vert)^{-m} \chi_{B(y,1)}\Vert_{\infty}\\
			&\leq&  4^{m}\mathcal{N}_{m}(f)(1 + \vert y \vert)^{-m}  \Vert \chi_{B(y,1)}\Vert_{\infty}\\
& = & 4^{m}\mathcal{N}_{m}(f)(1 + \vert y \vert)^{-m}.
		\end{eqnarray*}
Hence, for all  $m> \frac{2k+2}{p}$,
 $$\Vert f \Vert_{\infty,p} = \Vert \Vert f\chi_{B(y,1)}\Vert_{\infty} \Vert_{p} \leq 4^{m}\mathcal{N}_{m}(f)\Vert (1 + \vert \cdot \vert)^{-m} \Vert_{p} < \infty. $$
 Therefore, for all $m> \frac{2k+2}{p}$, $$\Vert f \Vert_{q, \, p} \leq C(q, \, m, \, k, \, p) \mathcal{N}_{m}(f)$$ and
the result follows.	
\end{proof}
The following result asserts that for $1 \leq q <s \leq p< \infty$ we have $L^{s, \; \infty}(\mu) \subset (L^{q}, \,  L^{p}) (\mu)$. \begin{prop}\label{prop 38}
Let $1 \leq q <s \leq p<\infty$. Then there exists a constant $C=C(p,\, q, \, s)$ such that
$$ \Vert f \Vert_{q,\, p} \leq C \Vert f \Vert_{L^{s, \, \infty}(\mu)}, \quad \,\,\,\,\,\, \: f \in L^{0}(\mu).$$
\end{prop}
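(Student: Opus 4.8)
The plan is to derive this weak-type embedding from the strong-type embeddings $L^{\sigma}(\mu)\hookrightarrow(L^q,L^p)(\mu)$ (for $q\le\sigma\le p$) of Proposition \ref{inclaqp}, by a Calder\'on--Marcinkiewicz truncation of $f$ at a single well-chosen height. If $\|f\|_{L^{s,\infty}(\mu)}=\infty$ there is nothing to prove, so I would assume it finite, set $\lambda=\|f\|_{L^{s,\infty}(\mu)}$, and decompose
\[
f=g+h,\qquad g=f\chi_{\{|f|>\lambda\}},\qquad h=f\chi_{\{|f|\le\lambda\}}.
\]
Using $1\le q<s\le p$, I would fix auxiliary exponents $s_0,s_1$ with $q\le s_0<s<s_1\le p$; the truncation is designed so that the unbounded part $g$ belongs to the smaller space $L^{s_0}(\mu)$ and the bounded part $h$ to the larger space $L^{s_1}(\mu)$.

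The two Lebesgue norms would be controlled solely through the distribution function, via the defining estimate $d_{f,k}(t)\le\|f\|_{L^{s,\infty}(\mu)}^{s}\,t^{-s}$ together with the layer-cake formula. For $g$ one has $\|g\|_{s_0}^{s_0}=s_0\int_0^\infty t^{s_0-1}d_{f,k}(\max\{\lambda,t\})\,dt$, and the tail integral converges exactly because $s_0<s$, giving
\[
\|g\|_{s_0}\le\Big(\tfrac{s}{s-s_0}\Big)^{1/s_0}\|f\|_{L^{s,\infty}(\mu)}^{\,s/s_0}\,\lambda^{\,1-s/s_0}.
\]
Symmetrically, $\|h\|_{s_1}^{s_1}=s_1\int_0^\lambda t^{s_1-1}\big(d_{f,k}(t)-d_{f,k}(\lambda)\big)\,dt$, with convergence at $t=0$ forced by $s_1>s$, so that
\[
\|h\|_{s_1}\le\Big(\tfrac{s_1}{s_1-s}\Big)^{1/s_1}\|f\|_{L^{s,\infty}(\mu)}^{\,s/s_1}\,\lambda^{\,1-s/s_1}.
\]

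Then I would apply Proposition \ref{inclaqp} separately to $g$ (with exponent $s_0$) and to $h$ (with exponent $s_1$), which is legitimate since $q\le s_0\le p$ and $q\le s_1\le p$, absorb the fixed factors $\mu(B_1)^{1/p-1/s_i+1/q}$ into constants, and invoke the triangle inequality $\|f\|_{q,p}\le\|g\|_{q,p}+\|h\|_{q,p}$. This yields $\|f\|_{q,p}\le C_0\|f\|_{L^{s,\infty}(\mu)}^{s/s_0}\lambda^{1-s/s_0}+C_1\|f\|_{L^{s,\infty}(\mu)}^{s/s_1}\lambda^{1-s/s_1}$, and the calibrated choice $\lambda=\|f\|_{L^{s,\infty}(\mu)}$ makes each exponent of $\|f\|_{L^{s,\infty}(\mu)}$ equal to $1$, collapsing the bound to $C\|f\|_{L^{s,\infty}(\mu)}$ with $C=C_0+C_1$ depending only on $p,q,s$.

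The step I expect to be the crux is the very choice of the intermediate exponents $s_0<s<s_1$ inside $[q,p]$: this is where the hypotheses $q<s$ and $s\le p$ are consumed, and the borderline $s=p$ is the most delicate, since there is then no admissible $s_1$ strictly larger than $s$ and the estimate for the small part $h$ would have to be argued separately. The same computation can alternatively be packaged as real interpolation, writing $L^{s,\infty}(\mu)=(L^{s_0}(\mu),L^{s_1}(\mu))_{\theta,\infty}$ and using $(X,X)_{\theta,\infty}=X$ for the fixed target space $X=(L^q,L^p)(\mu)$; the constraint on the exponents that appears there is identical.
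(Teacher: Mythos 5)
Your route is genuinely different from the paper's. The paper disposes of the whole statement in two lines: it writes $\Vert f\Vert_{q,p}^{q}=\Vert\,\vert f\vert^{q}\ast_{k}\chi_{B(0,1)}\Vert_{L^{p/q}(\mu)}$ and applies the Lorentz-space convolution inequality of Proposition~\ref{prop 37} with $\vert f\vert^{q}\in L^{s/q,\infty}(\mu)$ and $\chi_{B(0,1)}\in L^{p_{2},q_{2}}(\mu)$, $p_{2}=\frac{ps}{qs-qp+ps}$, so that the weak norm appears directly. You instead truncate at the single height $\lambda=\Vert f\Vert_{L^{s,\infty}(\mu)}$ and feed the two pieces into the strong embeddings of Proposition~\ref{inclaqp}. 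Your distribution-function estimates for $g$ and $h$ are correct as stated, and for $q<s<p$ the argument closes exactly as you describe; it is more elementary, using only the layer-cake formula and Proposition~\ref{inclaqp}, whereas the paper's proof stands or falls with the precise hypotheses under which Proposition~\ref{prop 37} is valid.

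The gap is the endpoint $s=p$, which the statement includes and which you explicitly leave open (``would have to be argued separately''). As written, your proof therefore does not establish the proposition on the stated range $q<s\le p$. However, your instinct that this endpoint is the crux is sound, and in fact it cannot be repaired: for $s=p$ the embedding fails. Take $f(x)=\vert x\vert^{-(2k+2)/p}$. Then $d_{f,k}(\lambda)=d_{k}\lambda^{-p}$, so $\Vert f\Vert_{L^{p,\infty}(\mu)}=d_{k}^{1/p}<\infty$, while with the equivalent norm of Proposition~\ref{prop 45} one finds $\mu(Q_{l})\Vert f\chi_{Q_{l}}\Vert_{q}^{p}\approx l^{(2k+1)p/q-1}$ for $l\ge 1$, and $(2k+1)p/q-1\ge 2k>-1$, so $\widetilde{\Vert f\Vert}_{q,p}=\infty$. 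The paper's proof reaches $s=p$ only by invoking Proposition~\ref{prop 37} with $p_{2}=1$ and $q_{1}=\infty$, a degenerate case in which the O'Neil-type inequality is not valid (already classically $L^{p,\infty}\ast L^{1}\not\subset L^{p,q_{3}}$ for finite $q_{3}$). So the honest assessment is that your method proves the proposition on the range $q<s<p$, which appears to be the range on which it is actually true; to cover $s=p$ you would have to follow the paper's convolution argument, and then you inherit its endpoint problem rather than solve it.
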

\begin{proof}
	Let $f \in L^{0}(\mu)$ and $1 \leq q <s \leq p<\infty$. \\
The result is obvious if $f$ is not an element of $L^{s, \infty}(\mu)$. So we suppose that $f \in L^{s, \infty}(\mu)$. \\ We have
	$$ \frac{ps}{qs-qp+ps} > 1 \quad \text{and} \quad \frac{qs-pq+ps}{ps} + \frac{q}{s} = \frac{q}{p} +1 $$
so that	according to Proposition \ref{prop 37},
	\begin{eqnarray*}
		\Vert f \Vert^{q}_{q,p} &=& \Vert \vert f \vert^{q} *_{k} \chi_{B(0,1)} \Vert_{L^{\frac{p}{q}}(\mu)}\\
		&\leq & \frac{3p}{q} \mu(B(0,1))^{\frac{qs-qp+ps}{ps}} (\Vert f \Vert_{L^{s, \infty}(\mu)} )^{q}.
	\end{eqnarray*}	
\end{proof}

\begin{prop} \label{prop 50}
	Assume that
	$1 \leq r_{1}, \, r_{2}, \; s_{1}, \, s_{2} \leq \infty $. Let $\theta \in (0,\,  1)$ satisfying
\begin{equation}
\frac{1}{r} = \dfrac{ \theta}{r_{1}} + \frac{1-\theta}{r_{2}}\;\; \text{ and } \;\;  \frac{1}{s} = \frac{ \theta}{s_{1}} + \dfrac{1-\theta}{s_{2}}.
\label{eq1}
\end{equation}
	Then, for all $f \in L^{0} (\mu)$ we have
 $$\Vert f \Vert_{s,r} \leq \Vert f \Vert^{\theta}_{s_{1},r_{1}} \Vert f \Vert^{1- \theta}_{s_{2},r_{2}} .$$ 				
\end{prop}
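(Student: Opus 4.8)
The plan is to convert the inner integral into an $L^{s}$–quantity taken against a \emph{fixed} (i.e.\ $y$–dependent but $f$–independent) positive measure, and then to apply the elementary interpolation inequality for Lebesgue norms twice: once in the inner variable and once in the outer variable $y$.

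First I would move the translation off the power of $f$, exactly as in the proof of Proposition~\ref{prop 3}. Using the adjoint identity $\int_{\mathbb{R}}(\tau_{y}g)\,h\,d\mu=\int_{\mathbb{R}}g\,(\tau_{-y}h)\,d\mu$, which is justified here by Tonelli since $g=|f|^{q}\ge 0$ and $h=\chi_{B_{1}}\ge 0$, one gets for $q<\infty$
\[
\Big[\int_{\mathbb{R}}\tau_{y}(|f|^{q})(x)\,\chi_{B_{1}}(x)\,d\mu(x)\Big]^{1/q}
=\Big[\int_{\mathbb{R}}|f|^{q}(x)\,\tau_{-y}\chi_{B_{1}}(x)\,d\mu(x)\Big]^{1/q}=:G_{q}(y).
\]
For each fixed $y$ the function $\tau_{-y}\chi_{B_{1}}$ is non-negative (by (\ref{**})) and supported in $B(y,1)$, so $d\sigma_{y}:=\tau_{-y}\chi_{B_{1}}\,d\mu$ is a positive measure and $G_{q}(y)=\Vert f\Vert_{L^{q}(\sigma_{y})}$. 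Thus $\Vert f\Vert_{q,r}=\Vert G_{q}\Vert_{r}$. In the endpoint $q=\infty$ I keep the definition $G_{\infty}(y)=\Vert f\chi_{B(y,1)}\Vert_{\infty}$, and note that $\sigma_{y}\ll\mu$ with support in $B(y,1)$ gives $\Vert f\Vert_{L^{\infty}(\sigma_{y})}\le G_{\infty}(y)$, which is the only comparison I shall need at that endpoint.

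With this representation the proof is two Hölder estimates. For the inner step, fix $y$ and apply the standard interpolation inequality for $L^{s}$–norms with respect to the single measure $\sigma_{y}$: since $\tfrac1s=\tfrac{\theta}{s_{1}}+\tfrac{1-\theta}{s_{2}}$, Hölder with conjugate exponents $\tfrac{s_{1}}{\theta s}$, $\tfrac{s_{2}}{(1-\theta)s}$ yields
\[
G_{s}(y)=\Vert f\Vert_{L^{s}(\sigma_{y})}\le \Vert f\Vert_{L^{s_{1}}(\sigma_{y})}^{\theta}\,\Vert f\Vert_{L^{s_{2}}(\sigma_{y})}^{1-\theta}\le G_{s_{1}}(y)^{\theta}\,G_{s_{2}}(y)^{1-\theta},
\]
pointwise in $y$ (when $s_{1}$ or $s_{2}$ equals $\infty$ one uses the $L^{\infty}$ version together with the bound of the previous paragraph). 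For the outer step I would apply the generalized Hölder inequality in the variable $y$ with exponents $\tfrac{r_{1}}{\theta r}$, $\tfrac{r_{2}}{(1-\theta)r}$, which are conjugate because $\tfrac1r=\tfrac{\theta}{r_{1}}+\tfrac{1-\theta}{r_{2}}$, to obtain
\[
\Vert f\Vert_{s,r}=\Vert G_{s}\Vert_{r}\le\big\Vert G_{s_{1}}^{\theta}G_{s_{2}}^{1-\theta}\big\Vert_{r}\le\Vert G_{s_{1}}\Vert_{r_{1}}^{\theta}\,\Vert G_{s_{2}}\Vert_{r_{2}}^{1-\theta}=\Vert f\Vert_{s_{1},r_{1}}^{\theta}\,\Vert f\Vert_{s_{2},r_{2}}^{1-\theta}.
\]

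The genuinely Dunkl-specific ingredient, and the step I would be most careful about, is the first one: the amalgam norm mixes a \emph{translated} power $\tau_{y}(|f|^{q})$ with the cutoff, and since $\tau_{y}$ is not multiplicative one cannot interpolate in $q$ directly. Moving the translation onto $\chi_{B_{1}}$ via the adjoint identity is what repairs this, turning the inner quantity into a bona fide $L^{q}$–norm of the single function $|f|$ against the fixed measure $\sigma_{y}$; after that, both inequalities are the classical log-convexity of $L^{p}$–norms and involve no further analysis of the translation operator. The remaining bookkeeping is only to check the conjugacy of the Hölder exponents and to treat the $\infty$–endpoints, for which the one-sided comparison $\Vert f\Vert_{L^{\infty}(\sigma_{y})}\le\Vert f\chi_{B(y,1)}\Vert_{\infty}$ suffices.
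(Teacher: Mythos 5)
Your proposal is correct and follows essentially the same route as the paper: the paper's proof also rewrites the inner quantity as $\int_{\mathbb R}|f|^{s}\,\tau_{-y}\chi_{B_1}\,d\mu$, splits the weight as $(\tau_{-y}\chi_{B_1})^{\frac{s\theta}{s_1}+\frac{s(1-\theta)}{s_2}}$ (which is exactly your interpolation of $L^{p}(\sigma_y)$-norms in disguise), applies H\"older pointwise in $y$, and then applies H\"older again in the outer variable with the exponents $\frac{r_1}{\theta r}$, $\frac{r_2}{(1-\theta)r}$, treating the $\infty$-endpoints separately just as you do.
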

\begin{proof}
Let $1 \leq r_{1}, \, r_{2}, \; s_{1}, \, s_{2} \leq \infty$ and $\theta \in (0,\, 1)$ satisfying (\ref{eq1}). Let $f \in L^{0}(\mu)$.\\
$\bullet$ Suppose that $s_{1}< \infty$ and $s_{2} < \infty $ .\\
		Let $y \in \mathbb{R}$. By H\"older inequality we have
\begin{eqnarray*}
& & \displaystyle \int_{\mathbb R}\left|f\right|^{s} (x) \tau_{-y}\chi_{B_1}(x)d\mu(x) \\
		&=& \displaystyle \int_{\mathbb R}\left|f\right|^{s(1- \theta)+s\theta} (x) (\tau_{-y}\chi_{B_1})^{\frac{ s\theta}{s_{1}} + \frac{s(1-\theta)}{s_{2}}}(x)d\mu(x) \\
  & \leq & \left(\displaystyle \int_{\mathbb R} \left( \left|f\right|^{s\theta} (x) (\tau_{-y}\chi_{B_1})^{\frac{ s\theta}{s_{1}}} (x)  \right)^{\frac{s_{1} }{ s\theta}} d\mu(x) \right)^{\frac{s\theta }{s_{1} }} \times \\
 & &  \left(\displaystyle \int_{\mathbb R} \left( \left|f\right|^{s(1- \theta)} (x) (\tau_{-y}\chi_{B_1})^{\frac{s(1-\theta)}{s_{2}}} (x)  \right)^{\frac{s_{2} }{ s(1-\theta)}} d\mu(x) \right)^{\frac{s(1-\theta) }{ s_{2}}}.
\end{eqnarray*}
By applying again H\"older inequality,  it follows that
\begin{eqnarray*}
& & \left\Vert \left( \displaystyle \int_{\mathbb R}\left|f\right|^{s} (x) \tau_{-(\cdot)}\chi_{B_1}(x)d\mu(x) \right)^{\frac{1}{s}} \right\Vert_{r}\\
& \leq & \left\Vert \left(\displaystyle \int_{\mathbb R}  \left|f\right|^{s_{1}} (x) (\tau_{-(\cdot)}\chi_{B_1}) (x) d\mu(x) \right)^{\frac{\theta }{s_{1} }} \right\Vert^{\theta}_{r_{1}} \times  \\
& &  \left\Vert \left(\displaystyle \int_{\mathbb R} \left|f\right|^{s_{2}} (x) (\tau_{-(\cdot)}\chi_{B_1}) (x) d\mu(x) \right)^{\frac{1-\theta }{ s_{2}}} \right\Vert^{1-\theta}_{r_{2}}.
\end{eqnarray*}
 Hence,
		$$\Vert f \Vert_{s,\, r} \leq \Vert f \Vert^{\theta}_{s_{1},\, r_{1}} \Vert f \Vert^{1- \theta}_{s_{2}, \, r_{2}}.$$
$\bullet$ Suppose that $s_{1} = s_{2} = s = \infty$ .\\
If $r=\infty$ then $r_{1}=r_{2}=\infty$. In this case, the result is obvious. So we suppose that $r< \infty$.\\
It follows from H\"older inequality that
		\begin{eqnarray*}
			\Vert f \Vert_{\infty, r}^{r}
			&=& \displaystyle \int_{\mathbb R} \Vert f \chi_{B(y,1)} \Vert^{ r\theta}_{\infty} \Vert f \chi_{B(y,1)} \Vert^{r(1-\theta)}_{\infty} d\mu(y) \\
			&\leq & \Vert \Vert f \chi_{B(\cdot,1)} \Vert_{\infty} \Vert_{r_{1}}^{r\theta} \Vert \Vert f \chi_{B(\cdot,1)} \Vert_{\infty} \Vert_{r_{2}}^{r(1-\theta)}.
		\end{eqnarray*}
Hence,  $$\Vert f \Vert_{\infty,r} \leq \Vert f \Vert^{\theta}_{\infty,r_{1}} \Vert f \Vert^{1- \theta}_{\infty,r_{2}}.$$
$\bullet$ Suppose that  $s_{1} = \infty$ and  $s_{2} < \infty$.\\
From H\"older inequality, we have
		\begin{eqnarray*}
			\Vert f \Vert_{s, r} &=& \left\Vert \left( \displaystyle \int_{\mathbb R}\left|f\right|^{s(1-\theta)+ s\theta} (x) \chi_{B(\cdot,1)}(x) \tau_{-(\cdot)}\chi_{B_1}(x)d\mu(x) \right)^{\frac{1}{s}} \right\Vert_{r} \\
			&\leq& \left\Vert  \Vert f \chi_{B(\cdot,1)} \Vert^{\theta}_{\infty}    \left( \displaystyle \int_{\mathbb R}\left|f\right|^{s_{2}} (x)  \tau_{-(\cdot)}\chi_{B_1}(x)d\mu(x) \right)^{\frac{1-\theta}{s_{2}}}  \right\Vert_{r} \\
			&\leq& \Vert  \Vert f \chi_{B(\cdot,1)} \Vert_{\infty}  \Vert^{\theta}_{r_{1}} \left\Vert \left( \displaystyle \int_{\mathbb R}\left|f\right|^{s_{2}} (x)  \tau_{-(\cdot)}\chi_{B_1}(x)d\mu(x) \right)^{\frac{1}{s_{2}}}  \right\Vert^{1-\theta}_{r_{2}}.
		\end{eqnarray*}
Hence,
$$\Vert f \Vert_{s,r} \leq \Vert f \Vert^{\theta}_{\infty,r_{1}} \Vert f \Vert^{1- \theta}_{s_{2},r_{2}}.$$
$\bullet$ Suppose that $s_{1} < \infty$ and $s_{2} = \infty$.\\
This case is similar to the previous one.	
\end{proof}

Now, we focus on the existence of an equivalent norm to $\left\| \cdot \right\|_{q,\, p}$. For this purpose, we set
$$Q_{l} = [l,\, l+1), \;\;\;\;\;\;\;\; l \in \mathbb{Z}.$$
 It is clear that $\{Q_{l}\, : \, l \in \mathbb{Z}  \}$ is a disjoint family which covers $\mathbb{R}$.\\
 For any $\delta>0$ and for any integer $l$, we denote by $\delta Q_{l}$ the half-open interval $[a_{l}, \, b_{l})$, where $a_{l}=l+\frac{1-\delta}{2}$ and $ b_{l}=l+\frac{1+\delta}{2}$.
\begin{lem} \label{lem 20}
Let $l$ be an integer. The following assertions hold.
	\begin{enumerate}
		\item For all $y \in \mathbb{R}$, the set $L_{y}=\left\{
		\begin{array}{ll}
		i  \in \mathbb{Z} : Q_{i}\cap I(y,1) \ne \emptyset
		\end{array}
		\right\}$ has at most three elements.
		\item For all $y \in Q_{l}$ , $Q_{l} \subset I(y,1)\subset 3Q_{l}$.
		\item $\mu$ is doubling with respect to $Q_{l}$.
	\end{enumerate}	
\end{lem}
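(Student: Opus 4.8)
The plan is to dispatch the three assertions separately; (1) and (2) are elementary interval arithmetic, and the only genuine difficulty, the degeneracy of the density $|x|^{2k+1}$ at the origin, appears in (3).

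For (1), I would first describe $L_y$ explicitly. Since $Q_i=[i,i+1)$ and $I(y,1)=(y-1,y+1)$, the half-open interval $Q_i$ meets the open interval $I(y,1)$ exactly when $i<y+1$ and $i+1>y-1$, i.e. when $y-2<i<y+1$ (the strict inequalities being the correct boundary behaviour because the matching endpoints are respectively excluded). Thus $L_y$ is the set of integers in an open interval of length $3$, and any such interval contains at most three integers, so $\#L_y\le 3$.

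For (2), fix $y\in Q_l$, so $l\le y<l+1$. To obtain $Q_l\subset I(y,1)$ I check the endpoints: for $x\in[l,l+1)$ one has $x\ge l>y-1$ (since $y<l+1$) and $x<l+1\le y+1$ (since $y\ge l$), hence $x\in(y-1,y+1)$. For the second inclusion, the definition with $\delta=3$ gives $3Q_l=[l-1,l+2)$; and $l\le y<l+1$ forces $y-1\ge l-1$ and $y+1<l+2$, so every $x\in(y-1,y+1)$ satisfies $l-1\le y-1<x<y+1<l+2$, i.e. $x\in 3Q_l$. This yields $Q_l\subset I(y,1)\subset 3Q_l$.

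For (3) I read ``$\mu$ is doubling with respect to $Q_l$'' as: for each $\delta\ge 1$ there is a constant $C=C(\delta,k)$, independent of $l$, with $\mu(\delta Q_l)\le C\,\mu(Q_l)$ (the case relevant to (2) being $\delta=3$). Both $Q_l$ and $\delta Q_l$ have common center $c=l+\tfrac12$. Because $|x|^{2k+1}$ is even, the reflection $x\mapsto -x$ carries $\delta Q_l$ onto $\delta Q_{-l-1}$ up to a set of $\mu$-measure zero, so $\mu(\delta Q_l)=\mu(\delta Q_{-l-1})$ and I may assume $l\ge 0$. I would then split into two regimes. For large indices, say $l\ge\lceil\delta\rceil$, one has $c-\tfrac{\delta}{2}\ge\tfrac12>0$, so $\delta Q_l\subset(0,\infty)$ and on it $x^{2k+1}\le\bigl(l+\tfrac{1+\delta}{2}\bigr)^{2k+1}$, giving $\mu(\delta Q_l)\le\bigl(2^{k+1}\Gamma(k+1)\bigr)^{-1}\delta\,\bigl(l+\tfrac{1+\delta}{2}\bigr)^{2k+1}$; since also $\mu(Q_l)\ge\bigl(2^{k+1}\Gamma(k+1)\bigr)^{-1}l^{2k+1}$ (the integrand being increasing on $(0,\infty)$ as $2k+1>0$), the quotient is at most $\delta\bigl(1+\tfrac{1+\delta}{2l}\bigr)^{2k+1}\le\delta\bigl(\tfrac{3+\delta}{2}\bigr)^{2k+1}$, a constant depending only on $\delta$ and $k$. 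For the finitely many remaining indices $0\le l<\lceil\delta\rceil$, each $Q_l$ is a nondegenerate interval, so $0<\mu(Q_l)<\infty$ and $\mu(\delta Q_l)<\infty$, whence the finite maximum of the ratios $\mu(\delta Q_l)/\mu(Q_l)$ over this range is itself finite. Taking $C$ to be the larger of the two bounds proves the inequality. The hard part is exactly the origin: the ``compare the integrand to its maximum and divide by the integrand's minimum'' estimate used for large $l$ collapses for the central intervals $Q_0,Q_{-1}$, where $|x|^{2k+1}$ vanishes at an endpoint and is non-monotone on $\delta Q_l$; isolating these finitely many intervals, where positivity and finiteness of $\mu(Q_l)$ make the ratio automatically bounded, is the device that absorbs the degeneracy into the constant.
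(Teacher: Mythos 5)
Your proof is correct, but for the only nontrivial assertion, part (3), it takes a genuinely different route from the paper. The paper dismisses (1) and (2) as trivial (your interval arithmetic is exactly what is meant) and proves (3) by identifying $Q_{l}$ with the symmetric interval $I(l+\tfrac12,\tfrac12)$ up to a $\mu$-null set, then invoking the result from the earlier paper of Nagacy and Feuto that $\mu$ is doubling for every interval $I(x,r)$; this is a two-line reduction that outsources all the analysis near the origin to the cited lemma. You instead prove the dilation inequality $\mu(\delta Q_{l})\leq C(\delta,k)\,\mu(Q_{l})$ from scratch: symmetry of $|x|^{2k+1}$ to reduce to $l\geq 0$, monotonicity of the density away from $0$ to handle large $l$ with an explicit constant $\delta\bigl(\tfrac{3+\delta}{2}\bigr)^{2k+1}$, and a finite-maximum argument for the few central intervals where the density degenerates. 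What your approach buys is self-containedness and, more importantly, generality in the dilation factor: the paper's displayed conclusion is only $\mu(2Q_{l})\leq C\mu(Q_{l})$, whereas the proof of Proposition \ref{prop 45} actually uses $\mu(7Q_{l})\leq C\mu(Q_{l})$, which your statement covers directly (the paper would obtain it by iterating, or by appealing to the interval-doubling result with a larger radius). What the paper's approach buys is brevity and consistency with its policy of reusing the metric-measure structure already established in \cite{NF}. Both arguments are sound.
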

\begin{proof}
We give the proof of the last assertion because the others are trivial.\\
Let $l$ be an integer. Put $u=l +\frac{1}{2}$. It is proved in \cite{NF} that for any $(x,\, r)\in \mathbb{R}\times (0,\, \infty)$, $\mu$ is doubling for $I(x,\, r)$. Then there exists a constant $C>0$ such that $$\mu(2I(u,\frac{1}{2}))   \leq C \mu(I(u,\, \frac{1}{2})).$$
As $$\mu(2Q_{l})  = \mu(2I(u,\, \frac{1}{2}))\;\;\;\text{and} \;\;\;\; \mu(I(u,\, \frac{1}{2})) =  \mu(Q_{l}),$$ we deduce that $$\mu (2Q_{l}) \leq C  \mu(Q_{l}).$$
This ends the proof.
\end{proof}

For $1\leq p, \, q \leq \infty$ and for any $\mu$-measurable function $f$ on $\mathbb{R}$, set
\begin{equation*} \label{rel 2}
		\widetilde{\left\|f\right\|}_{q,p}=\left\{\begin{array}{lll}\left(\sum\limits_{l \in \mathbb{Z} } \mu (Q_{l} ) \left\Vert f \chi_{Q_{l}} \right\Vert^{p}_{q}\right)^{\frac{1}{p}}&\text{ if }&p<\infty , \\
			\sup\limits_{l \in \mathbb{Z}} \left\Vert f \chi_{Q_{l}} \right\Vert_{q}&\text{ si }&p=\infty.
		\end{array}\right.
		\end{equation*}
\begin{prop} \label{prop 45}
Let $1\leq p, \, q\leq  \, \infty$. Then, there are positive constants $C_{1}$ and $C_{2}$ such that
	$$C_{1}\widetilde{\left\|f\right\|}_{q,\, p} \leq  \left\|f\right\|_{q, \, p} \leq C_{2}  \widetilde{\left\|f\right\|}_{q,\, p},\,\,\,\,\,\,f \in L^{0}(\mu).$$
\end{prop}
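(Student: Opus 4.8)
The plan is to interpose between $\left\|f\right\|_{q,p}$ and $\widetilde{\left\|f\right\|}_{q,p}$ a \emph{Euclidean} amalgam quantity built from the intervals $I(y,1)$ rather than from the Dunkl balls $B(y,1)$. For $f\in L^{0}(\mu)$ put
$$ N_{q,p}(f)=\left(\int_{\mathbb R}\left\|f\chi_{I(y,1)}\right\|_q^p\,d\mu(y)\right)^{1/p}\ (p<\infty),\qquad N_{q,\infty}(f)=\sup_{y\in\mathbb R}\left\|f\chi_{I(y,1)}\right\|_q. $$
I would then establish the two comparisons $\left\|f\right\|_{q,p}\asymp N_{q,p}(f)$ and $N_{q,p}(f)\asymp\widetilde{\left\|f\right\|}_{q,p}$; chaining them gives the proposition. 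All four corner cases ($q$ and $p$ finite or infinite) are treated in parallel, the only genuine dichotomy being whether $q<\infty$ or $q=\infty$.

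For the first comparison with $q<\infty$, the inner integral in the definition of $\left\|f\right\|_{q,p}$ is $\int_{\mathbb R}\tau_y|f|^q(x)\chi_{B(0,1)}(x)\,d\mu(x)$, which by Proposition \ref{p2} is comparable, uniformly in $y$, to $\int_{\mathbb R}|f|^q\chi_{I(y,1)}\,d\mu=\left\|f\chi_{I(y,1)}\right\|_q^q$; raising to the power $1/q$ and taking the $L^p(d\mu(y))$–norm gives $\left\|f\right\|_{q,p}\asymp N_{q,p}(f)$. For $q=\infty$ Proposition \ref{p2} is unavailable, and I would instead use the elementary identity $B(y,1)=I(|y|,1)\cup I(-|y|,1)$, valid for every $y\neq0$ (the two intervals being disjoint when $|y|>1$ and overlapping otherwise), so that $\left\|f\chi_{B(y,1)}\right\|_\infty=\max\{\left\|f\chi_{I(|y|,1)}\right\|_\infty,\left\|f\chi_{I(-|y|,1)}\right\|_\infty\}$. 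Since $\mu$ is even, folding the $y$–integral onto $[0,\infty)$ and using $\max\{a,b\}^p\asymp a^p+b^p$ turns $\left\|f\right\|_{\infty,p}^p$ into a constant multiple of $\int_{\mathbb R}\left\|f\chi_{I(y,1)}\right\|_\infty^p\,d\mu(y)$, and the case $p=\infty$ is the same identity with suprema.

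For the second comparison I would discretize with Lemma \ref{lem 20}. If $y\in Q_l$ then $Q_l\subset I(y,1)\subset 3Q_l$, hence $\left\|f\chi_{Q_l}\right\|_q\le\left\|f\chi_{I(y,1)}\right\|_q\le\left\|f\chi_{3Q_l}\right\|_q$, and since $3Q_l=Q_{l-1}\cup Q_l\cup Q_{l+1}$ we have $\left\|f\chi_{3Q_l}\right\|_q^q=\sum_{|j-l|\le1}\left\|f\chi_{Q_j}\right\|_q^q$ (a maximum when $q=\infty$). Integrating the left inequality over each $Q_l$ and summing yields $N_{q,p}(f)^p\ge\sum_l\mu(Q_l)\left\|f\chi_{Q_l}\right\|_q^p=\widetilde{\left\|f\right\|}_{q,p}^p$. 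For the reverse bound, integrating $\left\|f\chi_{3Q_l}\right\|_q^p$ over $Q_l$ and applying $\big(\sum_j a_j\big)^{p/q}\le C\sum_j a_j^{p/q}$ when $q<\infty$ (with $C=3^{p/q-1}$ if $p\ge q$ and $C=1$ if $p<q$), respectively $\max_j^p\le\sum_j$ when $q=\infty$, reduces matters to $\sum_l\mu(Q_l)\sum_{|j-l|\le1}\left\|f\chi_{Q_j}\right\|_q^p$. Interchanging the order of summation, the coefficient of $\left\|f\chi_{Q_j}\right\|_q^p$ becomes $\sum_{|l-j|\le1}\mu(Q_l)=\mu(3Q_j)$, which the doubling property of $\mu$ from Lemma \ref{lem 20} controls by $C\mu(Q_j)$ uniformly in $j$; this gives $N_{q,p}(f)^p\le C\widetilde{\left\|f\right\|}_{q,p}^p$. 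When $p=\infty$ no doubling is needed, the covering alone giving $\widetilde{\left\|f\right\|}_{q,\infty}\le N_{q,\infty}(f)\le 3^{1/q}\widetilde{\left\|f\right\|}_{q,\infty}$.

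The step I expect to be the main obstacle is the $q=\infty$ part of the first comparison: Proposition \ref{p2} is stated only for finite $q$, so the passage from the Dunkl ball $B(y,1)$ to Euclidean intervals must be carried out by hand through the decomposition $B(y,1)=I(|y|,1)\cup I(-|y|,1)$ together with the evenness of $\mu$, and one must verify that this interchange is compatible with the $L^p(d\mu)$ integration in $y$ uniformly in $y$. The remaining difficulties are bookkeeping: tracking the constant in $\big(\sum_j a_j\big)^{p/q}\le C\sum_j a_j^{p/q}$ across the regimes $p\ge q$ and $p<q$, and invoking the doubling of $\mu$ to absorb the bounded overlap of the dilated intervals $3Q_l$.
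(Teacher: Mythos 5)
Your proposal is correct, and for $q<\infty$ it is essentially the paper's own argument: Proposition \ref{p2} to replace the Dunkl ball $B(0,1)$ by the Euclidean interval $I(y,1)$, then the discretization over the $Q_l$ via Lemma \ref{lem 20}, bounded overlap of the dilates, and the doubling of $\mu$. The only genuine divergence is in the case $q=\infty$, $p<\infty$: the paper invokes Lemma 6.2 of \cite{NF} to dominate $\left\|f\right\|_{\infty,p}$ by $\left\|\left\|f\chi_{I(\cdot,3)}\right\|_{\infty}\right\|_{p}+\left\|\left\|f\chi_{I(-\cdot,3)}\right\|_{\infty}\right\|_{p}$ and then works with radius-$3$ intervals meeting at most seven $Q_l$, handling the reverse inequality separately by splitting the $y$-integral over $B(0,1)$ and its complement; you instead use the exact identity $B(y,1)=I(|y|,1)\cup I(-|y|,1)$ (up to the $\mu$-null point $\{0\}$ when $|y|\le 1$) together with the evenness of $\mu$, which gives both directions of the comparison at once and keeps the whole proof routed through the single interpolating quantity $N_{q,p}$. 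This is self-contained and slightly cleaner than the paper's treatment; the paper's version has the advantage of reusing an already published lemma. Your bookkeeping of the constants ($3^{p/q-1}$ versus $1$ according to whether $p\ge q$, and $\mu(3Q_j)\le C\mu(Q_j)$ from doubling) is sound, and the $p=\infty$ endpoints go through with essential suprema exactly as you indicate.
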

\begin{proof}
Let $f$ be any $\mu$-measurable function on $\mathbb{R}$.\\
$1^{rst}$ case.  We suppose that $q< \infty$.\\
According to Proposition \ref{p2}, there exists a constant $C > 0$ not depending on $f$ such that
$$\displaystyle \int_\mathbb{R}  \tau_{y} \vert f \vert^{q} (x) \chi_{B(0,1)} (x)  d\mu (x) \leq C \displaystyle \int_\mathbb{R}   \vert f \vert^{q} (x) \chi_{I(y,1)} (x)  d\mu (x).$$
$\bullet$ Suppose $p< \infty$.
Then
\begin{eqnarray*}
	\left\|f\right\|_{q, \, p}
	& \leq& C(q) \left[
	\displaystyle \int_\mathbb{R} \left[ \displaystyle \int_\mathbb{R}   \vert f \vert^{q} (x) \chi_{I(y,1)} (x)  d\mu (x) \right]^{\frac{p}{q}} d\mu(y) \right]^{\frac{1}{p}} \text{}      \\
	& \leq & C(q)\left[ \sum_{l \in \mathbb{Z}} \displaystyle \int_{Q_{l}} \left[\sum_{i \in L_{y}}  \int_{Q_{i}} \vert f \vert^{q} (x) \chi_{I(x,1)} (y)  d\mu (x) \right]^{\frac{p}{q}} d\mu(y) \right]^{\frac{1}{p}}.
\end{eqnarray*}	
By Lemma \ref{lem 20}, we get
\begin{eqnarray*}
	\left\|f\right\|_{q,p} & \leq & C(q,\, p) \left[  \sum\limits_{i \in \mathbb{Z} }\sum\limits_{l \in \mathbb{Z}}
	\displaystyle \int_{Q_{l}\cap 3Q_{i}}    \left\Vert f \chi_{Q_{i}} \right\Vert^{p}_{q}   d\mu(y) \right]^{\frac{1}{p}} \\
	& \leq & C(q,p) \left[  \sum\limits_{i \in \mathbb{Z} } \mu (Q_{i} )  \left\Vert f \chi_{Q_{i}} \right\Vert^{p}_{q} \right]^{\frac{1}{p}} = C(q,p) \widetilde{\left\|f\right\|}_{q,\, p}.
\end{eqnarray*}	
Applying again Lemma \ref{lem 20} and Proposition \ref{p2} we have
 \begin{eqnarray*}
\widetilde{\left\|f\right\|}_{q,p} &=& \left[  \sum\limits_{i \in \mathbb{Z} } \displaystyle \int_{Q_{i}} \left\Vert f \chi_{Q_{i}} \right\Vert^{p}_{q} d\mu(y) \right]^{\frac{1}{p}}\\
&\leq &\left[  \sum\limits_{i \in \mathbb{Z} } \displaystyle \int_{Q_{i}} \left\Vert f \chi_{I(y,1)} \right\Vert^{p}_{q}   d\mu(y) \right]^{\frac{1}{p}} =  \left[
\displaystyle \int_\mathbb{R}  \left\Vert f \chi_{I(y,1)} \right\Vert^{p}_{q} d\mu(y) \right]^{\frac{1}{p}}\\
& \leq & C(q)\left[
\displaystyle \int_\mathbb{R} \left[ \displaystyle \int_\mathbb{R}  \tau_{y} \vert f \vert^{q} (x) \chi_{B(0,1)} (x)  d\mu (x) \right]^{\frac{p}{q}} d\mu(y) \right]^{\frac{1}{p}}.
\end{eqnarray*}
Hence,  $$  \widetilde{\left\|f\right\|}_{q,p} \leq  C(q) \left\|f\right\|_{q,p}.$$	
$\bullet$ Suppose $p= \infty$. On the one hand,
according to Proposition \ref{p2}, there exists a constant $C > 0$ not depending on $f$ such that
\begin{eqnarray*}
\displaystyle \int_{B(0,1)}  \tau_{y} \vert f \vert^{q} (x)  d\mu (x) &\leq& C \displaystyle \int_{I(y,1)} \vert f \vert^{q} (x)  d\mu (x) \\
&\leq& C  \sum\limits_{l \in \mathbb{Z}: Q_{l}\cap I(y,1) \ne \emptyset }  \displaystyle \int_{Q_{l}}  \vert f \vert^{q} (x) \chi_{I(y,1)} (x)  d\mu (x) \\
&\leq& C \sup\limits_{l \in \mathbb{Z}} \left\Vert f \chi_{Q_{l}} \right\Vert^{q}_{q} \sum\limits_{l \in \mathbb{Z}: Q_{l}\cap I(y,1) \ne \emptyset }  1.
\end{eqnarray*}
Hence, $\quad \left\|f\right\|_{q,\infty} \leq C(q) \widetilde{\left\|f\right\|}_{q, \infty}.$\\
On the other hand, by applying again Lemma \ref{lem 20}, we have
$$\Vert f \chi_{Q_{l}} \Vert_{q} \leq \Vert f \chi_{I(y,1)} \Vert_{q},\;\;\;\;\; y \in Q_{l}.$$
Moreover, there exists a constant $C > 0$ not depending on $f$ such that
$$\displaystyle \int_{I(y,1)} \vert f \vert^{q} (x)  d\mu (x)\leq C\displaystyle \int_{B(0,1)}  \tau_{y} \vert f \vert^{q} (x)  d\mu (x).$$
Hence, $\quad \widetilde{\left\|f\right\|}_{q,\infty} \leq C(q) \left\|f\right\|_{q, \infty}.$\\
$2^{nd}$ case. We suppose that $q =  \infty$.\\
$\bullet$ Suppose $p< \infty$.
Thanks to Lemma 6.2 in \cite{NF} we have
	\begin{eqnarray*}
	\left\|f\right\|_{\infty,p}
	&\leq& \left\|\left\|f\chi_{I(\cdot,3)}\right\|_{\infty}\right\|_{p} + \left\|\left\|f\chi_{I(-\cdot,3)}\right\|_{\infty}\right\|_{p} = J_{1}+ J_{2}.
	\end{eqnarray*}
Notice that
$$ \vert f(x) \chi_{I(y,3)}(x) \vert \leq   \sum\limits_{l \in \mathbb{Z} }  \left\vert f(x) \chi_{I(y,3)}(x) \chi_{Q_{l}}(x)\right\vert , \;\;\;\;\; x \in \mathbb{R}.$$
Since for every integer $l$,
 $$I(x,3) \subset 7Q_{l}, \;\;\;\;\; x \in Q_{l},$$
we get $$\Vert f \chi_{I(y,3)} \Vert_{\infty} \leq  \sum\limits_{l \in \mathbb{Z}: Q_{l}\cap I(y,3) \ne \emptyset}  \left\Vert f \chi_{Q_{l}}\right\Vert_{\infty}\chi_{7Q_{l}}(y).$$
Thus, $$J_{1} \leq \left[
			\displaystyle \int_\mathbb{R} \left( \sum\limits_{l \in \mathbb{Z}: Q_{l}\cap I(y,3) \ne \emptyset}  \left\Vert f \chi_{Q_{l}}\right\Vert_{\infty}\chi_{7Q_{l}}(y) \right)^{p} d\mu(y) \right]^{\frac{1}{p}}.$$
As for all $y \in \mathbb{R}$ the set $ \left\{
			\begin{array}{ll}
			l  \in \mathbb{Z} : Q_{l}\cap I(y,3) \ne \emptyset
			\end{array}
			\right\}$ has at most seven elements, then by applying H\"older inequality we have
\begin{eqnarray*}
	J_{1} &\leq& 7^{1-\frac{1}{p}}\left[
			\displaystyle \int_\mathbb{R}  \sum\limits_{l \in \mathbb{Z}}  \left\Vert f \chi_{Q_{l}}\right\Vert^{p}_{\infty}\chi_{7Q_{l}}(y)  d\mu(y) \right]^{\frac{1}{p}}\\
		  &\leq& 7^{1-\frac{1}{p}} \left[ \sum\limits_{i \in \mathbb{Z}}
		  	\displaystyle \int_{Q_{i}} \sum\limits_{l \in \mathbb{Z}}  \left\Vert f \chi_{Q_{l}} \right\Vert^{p}_{\infty} \chi_{7Q_{l}} (y)  d\mu(y) \right]^{\frac{1}{p}}\\
		  	 &\leq& 7^{1-\frac{1}{p}} \left[ \sum\limits_{l \in \mathbb{Z}} \mu(7Q_{l})  \left\Vert f \chi_{Q_{l}} \right\Vert^{p}_{\infty} \right]^{\frac{1}{p}} . 		
	\end{eqnarray*}
Since $\mu$ is doubling, we get $ J_{1} \leq  C(p) 	\widetilde{\left\|f\right\|}_{\infty,p}.$ \\ Also, $ J_{2} \leq  C(p) 	\widetilde{\left\|f\right\|}_{\infty,p}$ because $\mu$ is a symmetric invariant measure. Thus the result follows. \\
For the reverse inequality, we apply Lemma \ref{lem 20} to get
$$ \sum\limits_{l \in \mathbb{Z}} \displaystyle \int_{Q_{l}}  \left\Vert f \chi_{Q_{l}} \right\Vert^{p}_{\infty} d\mu(y)\leq \sum\limits_{l \in \mathbb{Z}} \displaystyle \int_{Q_{l}}  \left\Vert f \chi_{I(y,1)} \right\Vert^{p}_{\infty} d\mu(y).$$
Then the previous inequality becomes
\begin{eqnarray*}
 & & \sum\limits_{l \in \mathbb{Z}} \displaystyle \int_{Q_{l}}  \left\Vert f \chi_{Q_{l}} \right\Vert^{p}_{\infty} d\mu(y) \\
& \leq & \displaystyle \int_{B(0,1)} \left\Vert f \chi_{I(y,1)} \right\Vert^{p}_{\infty} d\mu(y) +  \displaystyle \int_{B(0,1)^{c}}  \left\Vert f \chi_{I(y,1)} \right\Vert^{p}_{\infty} d\mu(y).
\end{eqnarray*}
Since $I(y,1) \subset B(y,1)$ for $\vert y \vert \ge 1$, we deduce that
\begin{eqnarray*}
\displaystyle \int_{B(0,1)^{c}}  \left\Vert f \chi_{I(y,1)} \right\Vert^{p}_{\infty} d\mu(y)
& \leq &  \displaystyle \int_{B(0,1)^{c}}  \left\Vert f \chi_{B(y,1)} \right\Vert^{p}_{\infty} d\mu(y) \\
& \leq & \displaystyle \int_\mathbb{R}  \left\Vert f \chi_{B(y,1)} \right\Vert^{p}_{\infty} d\mu(y)= \left\|f\right\|_{\infty,p}^{p}.
\end{eqnarray*}
As $I(y,1)\setminus \{0 \}  \subset B(y,1)$ when $\vert y \vert < 1$, then
\begin{eqnarray*}
\displaystyle \int_{B(0,1)} \left\Vert f \chi_{I(y,1)} \right\Vert^{p}_{\infty} d\mu(y) & = & \displaystyle \int_{B(0,1)} \left\Vert f \chi_{I(y,1)\setminus \{0 \}} \right\Vert^{p}_{\infty} d\mu(y) \\
& \leq & \displaystyle \int_\mathbb{R}  \left\Vert f \chi_{B(y,1)} \right\Vert^{p}_{\infty} d\mu(y)= \left\|f\right\|_{\infty,p}^{p}.
\end{eqnarray*}
Hence, $\quad \widetilde{\left\|f\right\|}_{\infty,p} \leq 2^{\frac{1}{p}} \left\|f\right\|_{\infty,p}.$ \\
$\bullet$ Suppose $p= \infty$. \\
By Proposition \ref{prop 3}, $\left\|f\right\|_{\infty,\infty} = \left\|f\right\|_{\infty}$.
To get the desired result, we will show that $\widetilde{\left\|f\right\|}_{\infty,\infty} = \left\|f\right\|_{\infty}$. \\
Since $$ \Vert f \chi_{Q_{l}} \Vert_{\infty} \leq  \left\|f\right\|_{\infty},\,\;\;\;\;\;\; l \in \mathbb{Z},$$ it follows that $\widetilde{\left\|f\right\|}_{\infty,\infty} \leq  \left\|f\right\|_{\infty}.$ \\ For the converse, we assume that $ \Vert f \Vert_{ \infty} > 0$ , otherwise the result is obvious.\\
Let $r$ be a real number satysftying $0<r< \left\|f\right\|_{\infty}$. Then
$$ 0<\mu \left(\left\{ x \in \mathbb{R} \, :  | f(x) | > r \right\} \right) = \mu \left( \bigcup_{l \in \mathbb{Z}}  \left\{ x \in  Q_{l} \; : \;  | f(x) | > r \right\} \right).$$
Thus, there exists an integer $l$ satisfying $0 < \mu \left( \left\{x \in  Q_{l} \, : \;  | f(x) | > r \right\} \right).$ Hence, $r< \Vert f \chi_{Q_{l}} \Vert_{\infty} \leq \widetilde{\left\|f\right\|}_{\infty,\infty}$. Therefore,
$ \left\|f\right\|_{\infty} \leq \widetilde{\left\|f\right\|}_{\infty,\infty}$ and the claim follows.
\end{proof}
It is easy to see that the map $f\mapsto \widetilde{\| f \|}_{q, \, p}$ defines a norm on $(L^{p},\, L^{q})(\mu)$.
\begin{lem}\label{rem 5} There is a constant $C>0$ such that for all integers $l$ we have
$$ \mu(I(0,1))  \leq C \mu(Q_{l}).$$
\end{lem}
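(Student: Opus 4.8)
The plan is to prove the inequality by direct computation, exploiting the explicit form of the weight $|x|^{2k+1}$ defining $\mu$. First I would write both sides in integral form: with $c_{k}=(2^{k+1}\Gamma(k+1))^{-1}$ we have $\mu(I(0,1))=c_{k}\int_{-1}^{1}|x|^{2k+1}\,dx$ and $\mu(Q_{l})=c_{k}\int_{l}^{l+1}|x|^{2k+1}\,dx$. Since the density is even, the point $\{-1\}$ is $\mu$-null, so $I(0,1)=(-1,1)$ differs from $Q_{-1}\cup Q_{0}=[-1,1)$ by a null set and therefore $\mu(I(0,1))=\mu(Q_{-1})+\mu(Q_{0})$. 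In particular $\mu(Q_{-1})=\mu(Q_{0})=c_{k}\int_{0}^{1}x^{2k+1}\,dx$ by the same evenness, so $\mu(I(0,1))=2\mu(Q_{0})$.

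The key step is to show that $\mu(Q_{0})$ is the smallest among all the $\mu(Q_{l})$. Using the change of variable $x\mapsto -x$ together with the evenness of $|x|^{2k+1}$, one checks $\mu(Q_{l})=\mu(Q_{-l-1})$, so it suffices to treat $l\geq 0$. For such $l$ I would invoke the monotonicity of $t\mapsto t^{2k+1}$ on $[0,\infty)$, valid because $2k+1>0$: substituting $x=t+l$ gives $\int_{l}^{l+1}x^{2k+1}\,dx=\int_{0}^{1}(t+l)^{2k+1}\,dt\geq\int_{0}^{1}t^{2k+1}\,dt$, whence $\mu(Q_{l})\geq\mu(Q_{0})$. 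Combining this with the symmetry relation yields $\mu(Q_{l})\geq\mu(Q_{0})$ for every integer $l$.

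Finally I would assemble the two facts: $\mu(I(0,1))=2\mu(Q_{0})\leq 2\mu(Q_{l})$ for all $l$, so the claim holds with the explicit constant $C=2$. The only point that requires care is the reduction by symmetry combined with the monotonicity argument that pins the minimum of $\mu(Q_{l})$ at the two central intervals $Q_{-1}$ and $Q_{0}$; no doubling property of $\mu$ (such as the one recorded in Lemma \ref{lem 20}) is needed here, only the fact that the weight grows away from the origin, which guarantees $\inf_{l}\mu(Q_{l})=\mu(Q_{0})>0$ and hence a uniform lower bound on $\mu(Q_{l})$.
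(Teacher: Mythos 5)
Your proof is correct, but it takes a genuinely different route from the paper's. The paper deduces the lemma from three structural facts: the inequality $\mu(I(0,1))\leq 2\,\mu(I(x,1))$ for all $x\in\mathbb{R}$ established in \cite{NF}, the inclusion $I(x,1)\subset 3Q_{l}$ for $x\in Q_{l}$ from Lemma \ref{lem 20}, and the doubling property of $\mu$ with respect to $Q_{l}$ (the third assertion of that lemma), which together give $\mu(I(0,1))\leq 2\,\mu(3Q_{l})\leq C\,\mu(Q_{l})$. You instead compute directly with the density $|x|^{2k+1}$: evenness identifies $\mu(I(0,1))$ with $2\,\mu(Q_{0})$ and pairs $Q_{l}$ with $Q_{-l-1}$, and the monotonicity of $s\mapsto s^{2k+1}$ on $[0,\infty)$ (valid since $k>-\tfrac{1}{2}$ gives $2k+1>0$) shows $\mu(Q_{l})\geq\mu(Q_{0})$ for $l\geq 0$ and hence for every integer $l$, yielding the explicit constant $C=2$. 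All steps check out; in particular the endpoint discrepancies between the open interval $(-1,1)$ and the half-open union $Q_{-1}\cup Q_{0}$ are indeed $\mu$-null because $\mu$ is absolutely continuous with respect to Lebesgue measure. What each approach buys: yours is more elementary and self-contained (no appeal to \cite{NF} or to doubling) and produces a sharp explicit constant, while the paper's argument is less computational and would survive replacing $\mu$ by any doubling measure satisfying the uniform comparison $\mu(I(0,1))\leq C\,\mu(I(x,1))$, without using the specific form of the weight.
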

\begin{proof}
It is proved in \cite{NF} that $$\mu(I(0,1))  \leq 2 \mu(I(x,1)),\,\,\,\,\,\,\, x \in \mathbb{R}.$$
Thus, for $x \in Q_{l}$, we have by Lemma  \ref{lem 20} that
$$\mu(I(0,1))  \leq 2 \mu(I(x,1))\leq 2 \mu (3 Q_{l}).$$
The desired result follows from the fact that $\mu$ is doubling with respect to $Q_{l}$.
\end{proof}

\begin{prop} \label{prop 46}
	Let $1 \leq q, \, p_{1}, \, p_{2} \leq \infty$ and $p_{2}\geq p_{1}$. Then there exists a constant $C >0$ such that
$$\left\| f \right\|_{q, \, p_{2}}   \leq C \: \left\| f  \right\|_{q,\, p_{1}},\;\,\;\;\;\;\;\; f \in L^{0}(\mu).$$
\end{prop}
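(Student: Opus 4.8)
The plan is to pass to the equivalent sequence norm $\widetilde{\left\|\cdot\right\|}_{q,p}$ supplied by Proposition \ref{prop 45} and to reduce the assertion to the elementary nesting of weighted $\ell^{p}$-spaces. The crucial input is that the weights $\mu(Q_{l})$ are bounded below uniformly in $l$, which is precisely the content of Lemma \ref{rem 5}: it yields a constant $c_{0}:=\mu(I(0,1))/C>0$ such that $\mu(Q_{l})\geq c_{0}$ for every $l\in\mathbb{Z}$. This uniform lower bound is what distinguishes the global (discrete) part of the amalgam norm from an ordinary $L^{p}(\mu)$-norm, for which no such inclusion holds.

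First I would write $a_{l}=\left\|f\chi_{Q_{l}}\right\|_{q}$ and $w_{l}=\mu(Q_{l})$, so that by Proposition \ref{prop 45} the claim is reduced, up to the two-sided equivalence between $\left\|\cdot\right\|_{q,p}$ and $\widetilde{\left\|\cdot\right\|}_{q,p}$, to the discrete inequality
$$
\Big(\sum_{l\in\mathbb{Z}} w_{l}\,a_{l}^{\,p_{2}}\Big)^{1/p_{2}} \leq C'\Big(\sum_{l\in\mathbb{Z}} w_{l}\,a_{l}^{\,p_{1}}\Big)^{1/p_{1}},
$$
with the obvious modification when $p_{2}=\infty$. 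For the main case $p_{1}\leq p_{2}<\infty$, I would set $b_{l}=w_{l}^{1/p_{1}}a_{l}$, so that the right-hand side is $\left\|(b_{l})\right\|_{\ell^{p_{1}}}$. Then, using $w_{l}\,a_{l}^{\,p_{2}}=w_{l}^{\,1-p_{2}/p_{1}}\,b_{l}^{\,p_{2}}$ together with $1-p_{2}/p_{1}\leq 0$ and $w_{l}\geq c_{0}$, one obtains $w_{l}\,a_{l}^{\,p_{2}}\leq c_{0}^{-(p_{2}/p_{1}-1)}\,b_{l}^{\,p_{2}}$; summing over $l$ and invoking the standard sequence inclusion $\left\|(b_{l})\right\|_{\ell^{p_{2}}}\leq\left\|(b_{l})\right\|_{\ell^{p_{1}}}$ gives the displayed inequality with $C'=c_{0}^{-(1/p_{1}-1/p_{2})}$.

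The boundary cases require only minor variants of the same idea. If $p_{1}=p_{2}$ the statement is trivial, and $p_{1}=\infty$ forces $p_{2}=\infty$. When $p_{2}=\infty$ and $p_{1}<\infty$, from $w_{l}\,a_{l}^{\,p_{1}}\leq\widetilde{\left\|f\right\|}_{q,p_{1}}^{\,p_{1}}$ and $w_{l}\geq c_{0}$ I get $a_{l}\leq c_{0}^{-1/p_{1}}\widetilde{\left\|f\right\|}_{q,p_{1}}$ for each $l$, hence $\widetilde{\left\|f\right\|}_{q,\infty}=\sup_{l}a_{l}\leq c_{0}^{-1/p_{1}}\widetilde{\left\|f\right\|}_{q,p_{1}}$. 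Combining any of these with the equivalence of Proposition \ref{prop 45} yields $\left\|f\right\|_{q,p_{2}}\leq C\left\|f\right\|_{q,p_{1}}$, which is the assertion. I do not expect any genuine obstacle here: the entire substance of the proof is the uniform lower bound $\mu(Q_{l})\geq c_{0}$ from Lemma \ref{rem 5}, without which the weighted inclusion $\ell^{p_{1}}\hookrightarrow\ell^{p_{2}}$ would fail.
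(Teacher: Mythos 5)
Your proof is correct and follows essentially the same route as the paper: pass to the equivalent discrete norm $\widetilde{\left\|\cdot\right\|}_{q,p}$ of Proposition \ref{prop 45}, use the uniform lower bound $\mu(Q_{l})\geq c_{0}$ from Lemma \ref{rem 5} to absorb the negative power of the weight, and conclude with the nesting $\ell^{p_{1}}\hookrightarrow\ell^{p_{2}}$. The only difference is cosmetic (you move the weight into the sequence before applying the $\ell^{p}$ inclusion, the paper applies the inclusion first and then factors the weight), so there is nothing to add.
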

\begin{proof}
	Let $f \in L^{0}(\mu)$. If $p_{1} = p_{2}$ then the result is obvious. \\
	$\bullet$ We suppose that $p_{1} < p_{2}< \infty$.	
	  We have
	\begin{eqnarray*}
		\widetilde{\left\|f\right\|}_{q,\, p_{2}}  &=& \left[\sum\limits_{i \in \mathbb{Z}}  \left[ \mu (Q_{i})^{\frac{1}{p_{2}}} \left\Vert f \chi_{Q_{i}} \right\Vert_{q} \right]^{p_{2}} \right]^{\frac{1}{p_{2}}}\\
		& \leq & \left[\sum\limits_{i \in \mathbb{Z} }\mu (Q_{i} )^{\frac{p_{1}}{p_{2}}} \left\Vert f \chi_{Q_{i}} \right\Vert^{p_{1}}_{q} \right]^{\frac{1}{p_{1}}} \\
		&=& \left[\sum\limits_{i \in \mathbb{Z} }  \mu (Q_{i} )^{\frac{p_{1}}{p_{2}}-1} \mu (Q_{i} ) \left\Vert f \chi_{Q_{i}} \right\Vert^{p_{1}}_{q} \right]^{\frac{1}{p_{1}}}.
	\end{eqnarray*}	
An application of Lemma \ref{rem 5} and Proposition \ref{prop 45} leads to the desired result.\\
$\bullet$ Suppose $p_{1} < p_{2}=\infty$. 	
For all $l \in \mathbb{Z}$, $$\left\Vert f \chi_{Q_{l}} \right\Vert^{p_{1}}_{q} \leq \sum\limits_{l \in \mathbb{Z} } 1 \left\Vert f \chi_{Q_{l}} \right\Vert^{p_{1}}_{q} \leq C\mu (I(0,1))^{-1} \sum\limits_{l \in \mathbb{Z} } \mu(Q_{l} )  \left\Vert f \chi_{Q_{l}} \right\Vert^{p_{1}}_{q}$$
according to Lemma \ref{rem 5}. The result follows by Proposition \ref{prop 45}.		
\end{proof}
Proposition \ref{prop 45} and Proposition \ref{prop 46} lead to the following remark.
\begin{remark}
  For $1 \leq \, q,\, p_{1},\, p_{2},\ \leq \infty$ and $p_{2}\geq p_{1}$ we have
  $$(L^{q},\, L^{p_{1}})(\mu) \subset (L^{q},\, L^{p_{2}})(\mu).$$
   If in addition $p_{2}=q$, then by Proposition \ref{prop 3},
  $$ (L^{q},\, L^{p_{1}})(\mu) \subset L^{q}(\mu).$$
  \label{R2}
\end{remark}
For $1\leq q < \infty$, we denote by $L_{c}^{q} (\mu)$ the set of elements of $L^{q} (\mu)$ with compact support. For any element $f$ of $L^{0}(\mu)$, we will denote by $supp (f)$ its support.\\
The following result shows that $L_{c}^{q} (\mu) \subset (L^{q}, L^{p}) (\mu)$ when $1 \leq q,p < \infty$.
\begin{lem} \label{lem 21}
	Let $1 \leq q, \, p < \infty$. If $f \in L_{c}^{q} (\mu)$, then there is a constant $C>0$ such that
$$\widetilde{\left\|f\right\|}_{q,p} \leq C \left\Vert f  \right\Vert_{q}.$$
\end{lem}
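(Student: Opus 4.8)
The plan is to use the compactness of the support of $f$ to collapse the series defining $\widetilde{\left\|f\right\|}_{q,p}$ to a finite sum, and then to estimate it crudely. I emphasise at the outset that the constant $C$ is allowed to depend on $f$, in fact only on its support; this dependence is unavoidable, since $\mu(Q_l)$ grows like $|l|^{2k+1}$ as $|l|\to\infty$, so no constant uniform in $f\in L_c^q(\mu)$ could work.

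First I would set $L=\{l\in\mathbb{Z}:Q_l\cap \mathrm{supp}(f)\neq\emptyset\}$. Because $\mathrm{supp}(f)$ is compact while the $Q_l$ are unit intervals tiling $\mathbb{R}$, only finitely many of them can meet the support; hence $L$ is finite and $f\chi_{Q_l}=0$ for every $l\notin L$. Since $p<\infty$, the definition of $\widetilde{\left\|\cdot\right\|}_{q,p}$ then gives
$$\widetilde{\left\|f\right\|}_{q,p}^{\,p}=\sum_{l\in\mathbb{Z}}\mu(Q_l)\,\left\|f\chi_{Q_l}\right\|_q^p=\sum_{l\in L}\mu(Q_l)\,\left\|f\chi_{Q_l}\right\|_q^p.$$

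Next I would bound each inner factor trivially by $\left\|f\chi_{Q_l}\right\|_q\le\left\|f\right\|_q$, valid because $Q_l\subset\mathbb{R}$. This yields
$$\widetilde{\left\|f\right\|}_{q,p}^{\,p}\le\left\|f\right\|_q^p\sum_{l\in L}\mu(Q_l)=\left\|f\right\|_q^p\,\mu\Big(\bigcup_{l\in L}Q_l\Big).$$
The set $\bigcup_{l\in L}Q_l$ is a finite union of unit intervals, hence bounded, so it has finite $\mu$-measure. Putting $C=\mu\big(\bigcup_{l\in L}Q_l\big)^{1/p}<\infty$ gives $\widetilde{\left\|f\right\|}_{q,p}\le C\left\|f\right\|_q$, as required; combined with Proposition \ref{prop 45} this shows $L_c^q(\mu)\subset(L^q,L^p)(\mu)$.

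There is essentially no serious obstacle: once the sum is finite, even the coarsest estimate suffices, and no relation between $p$ and $q$ is needed. The only point deserving attention is the quantifier order — the constant genuinely depends on a bounded set containing $\mathrm{supp}(f)$ — which is exactly what licenses replacing $\left\|f\chi_{Q_l}\right\|_q$ by $\left\|f\right\|_q$. Should one wish to record the dependence on $|L|$ more sharply (for instance to treat the regimes $q\le p$ and $q>p$ separately), one could instead apply the finite comparison between $\ell^p$ and $\ell^q$ norms to the sequence $\big(\left\|f\chi_{Q_l}\right\|_q\big)_{l\in L}$ together with $\left\|f\right\|_q^q=\sum_{l}\left\|f\chi_{Q_l}\right\|_q^q$, but this refinement is not needed for the stated inequality.
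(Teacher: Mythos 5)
Your argument is correct and is essentially identical to the paper's: both restrict the sum to the finitely many $l$ with $Q_l\cap\mathrm{supp}(f)\neq\emptyset$, bound $\left\Vert f\chi_{Q_l}\right\Vert_q\leq\left\Vert f\right\Vert_q$, and absorb the finite sum $\sum_l\mu(Q_l)$ into the constant $C$, which therefore depends on $\mathrm{supp}(f)$ exactly as you note. Your explicit remark about the unavoidable dependence of $C$ on the support is a worthwhile clarification that the paper leaves implicit.
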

\begin{proof}
Let $f \in L_{c}^{q} (\mu)$. Let us set $K=supp(f)$. We have
	\begin{eqnarray*}
		\widetilde{\left\|f\right\|}_{q,p} &=&  \left[\sum\limits_{l \in \mathbb{Z}: Q_{l}\cap K \ne \emptyset } \mu (Q_{l}) \left\Vert f \chi_{Q_{l}\cap K} \right\Vert^{p}_{q} \right]^{\frac{1}{p}}\\
		&\leq &  \left\Vert f \chi_{ K} \right\Vert_{q}\left[\sum\limits_{l \in \mathbb{Z}: Q_{l}\cap K \ne \emptyset } \mu (Q_{l})  \right]^{\frac{1}{p}}=C \|f \|_{q},
	\end{eqnarray*}
	since the set $\{l \in \mathbb{Z}: Q_{l}\cap K \ne \emptyset \}$ has a finite number of elements.
\end{proof}

\begin{prop} \label{prop 48}
	Let $1 \leq q, \, p < \infty$. Then $L_{c}^{q} (\mu)$ is a dense subspace of $(L^{q}, L^{p}) (\mu)$.
\end{prop}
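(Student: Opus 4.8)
The plan is to approximate an arbitrary $f \in (L^{q},L^{p})(\mu)$ by its restrictions to large unions of the cubes $Q_{l}$, and to read off the convergence from the equivalent norm $\widetilde{\left\|\cdot\right\|}_{q,p}$ of Proposition \ref{prop 45}, whose series form is tailor-made for this argument. First I would record that, since $p<\infty$,
\[
\widetilde{\left\|f\right\|}_{q,p}^{\,p}=\sum_{l\in\mathbb{Z}}\mu(Q_{l})\left\Vert f\chi_{Q_{l}}\right\Vert_{q}^{p}<\infty ,
\]
so that every summand is finite; because $\mu(Q_{l})>0$, this forces each local norm $\left\Vert f\chi_{Q_{l}}\right\Vert_{q}$ to be finite.

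Next I would construct the truncations. For $N\in\mathbb{N}$ set $K_{N}=\bigcup_{|l|\leq N}Q_{l}$, which is a bounded (hence relatively compact) interval, and put $f_{N}=f\chi_{K_{N}}$. Since $K_{N}$ meets only finitely many cubes, $\left\Vert f_{N}\right\Vert_{q}^{q}=\sum_{|l|\leq N}\left\Vert f\chi_{Q_{l}}\right\Vert_{q}^{q}<\infty$, so $f_{N}\in L_{c}^{q}(\mu)$; Lemma \ref{lem 21} moreover places $f_{N}$ in $(L^{q},L^{p})(\mu)$, so this is a legitimate approximating sequence inside $L_{c}^{q}(\mu)$.

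Finally I would estimate the error. As $\{Q_{l}\}$ is a disjoint cover and $K_{N}$ is exactly the union of the cubes with $|l|\leq N$, one has $(f-f_{N})\chi_{Q_{l}}=0$ for $|l|\leq N$ and $(f-f_{N})\chi_{Q_{l}}=f\chi_{Q_{l}}$ for $|l|>N$, whence
\[
\widetilde{\left\|f-f_{N}\right\|}_{q,p}^{\,p}=\sum_{|l|>N}\mu(Q_{l})\left\Vert f\chi_{Q_{l}}\right\Vert_{q}^{p}.
\]
This is the tail of the convergent series displayed above, so it tends to $0$ as $N\to\infty$; invoking the equivalence of norms in Proposition \ref{prop 45} gives $\left\|f-f_{N}\right\|_{q,p}\to 0$, which establishes density. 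The only point requiring care is the verification that each $f_{N}$ really belongs to $L_{c}^{q}(\mu)$, i.e. the finiteness of the individual local $L^{q}$-norms; once this is secured from $\widetilde{\left\|f\right\|}_{q,p}<\infty$, the series representation of the amalgam norm reduces the whole statement to the elementary fact that the tails of a convergent series vanish, so no genuine obstacle remains.
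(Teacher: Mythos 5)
Your proof is correct, but it takes a genuinely different route from the paper's. You truncate sharply, setting $f_{N}=f\chi_{K_{N}}$ with $K_{N}=\bigcup_{|l|\leq N}Q_{l}$, and then read the convergence $\widetilde{\left\|f-f_{N}\right\|}_{q,p}\to 0$ directly off the tail of the convergent series defining $\widetilde{\left\|\cdot\right\|}_{q,p}$; membership of $f_{N}$ in $L^{q}_{c}(\mu)$ is immediate because only finitely many cubes meet $K_{N}$, each contributing a finite local norm. The paper instead uses the continuous cutoff $f_{n}(x)=f(x)\max(1-|x/n|,0)$, proves $f_{n}\in L^{q}(\mu)$ by a two-case estimate ($p\leq q$ via Lemma \ref{rem 5}, $q<p$ via H\"older) on $\widetilde{\left\|f_{n}\right\|}_{q,q}$ together with Proposition \ref{prop 3}, and then obtains $\left\|f_{n}-f\right\|_{q,p}\to 0$ by two applications of the dominated convergence theorem (first in the inner $L^{q}(\mu)$ norm, then in the outer $L^{p}(\mu)$ norm). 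Your argument is the more elementary of the two: it exploits the discrete series form of the equivalent norm from Proposition \ref{prop 45} so that everything reduces to the vanishing of tails of a convergent series, and it sidesteps both the case analysis and the measure-theoretic limit interchange. What the paper's approach buys is independence from the discretized norm in the convergence step (only pointwise domination by $f$ is used there) and approximants obtained by multiplication with a continuous compactly supported weight, which is the standard template if one later wants to upgrade the approximating class. Both proofs are complete; the one point you rightly flag --- finiteness of each $\left\Vert f\chi_{Q_{l}}\right\Vert_{q}$ --- does follow from $\widetilde{\left\|f\right\|}_{q,p}<\infty$ and $\mu(Q_{l})>0$, so there is no gap.
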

\begin{proof}
Let $1 \leq q, \, p < \infty$ and $f \in (L^{q},\, L^{p}) (\mu)$. Let $n$ be a positive integer. We consider the function $f_n$ defined by $$f_{n} (x) = f(x) \max(1- \vert \frac{x}{n} \vert ,0), \,\,\,\,\,\,\, x \in \mathbb{R}.$$
a) We have $suppf_{n} \subset B(0,n)$.\\
We will show that $f_{n}$ belongs to $L^{q} (\mu)$ by examining two cases.\\
$\bullet$ Suppose that $p \leq q$.\\
We have
		\begin{eqnarray*}
			\widetilde{\left\|f_{n}\right\|}_{q, \, q}  & \leq &  \left[\sum\limits_{l \in \mathbb{Z} } \left[ (\mu (Q_{l}))^{\frac{1}{q}} \left\Vert f \chi_{Q_{l}} \right\Vert_{q} \right]^{q} \right]^{\frac{1}{q}}
			 \leq  \left[\sum\limits_{l \in \mathbb{Z} }  (\mu (Q_{l}))^{\frac{p}{q}} \left\Vert f\chi_{Q_{l}} \right\Vert^{p}_{q}  \right]^{\frac{1}{p}} \\ &\leq &\left[\sum\limits_{l \in \mathbb{Z} }  (\mu (Q_{l}))^{\frac{p}{q} -1} \mu (Q_{l}) \left\Vert f \chi_{Q_{l}} \right\Vert^{p}_{q}  \right]^{\frac{1}{p}}.
		\end{eqnarray*}
It follows that if $p=q$ then $\widetilde{\left\|f_{n}\right\|}_{q, \, q} \leq \widetilde{\left\|f \right\|}_{q, \, p} $. If $p < q$ then an application of Lemma \ref{rem 5} ensures the existence of a constant $C> 0$ such that $\widetilde{\left\|f_{n}\right\|}_{q, \, q} \leq C  \widetilde{\left\|f\right\|}_{q, \; p}$.\\
We deduce from Proposition \ref{prop 3} that $f \in L^{q}(\mu)$.\\
$\bullet$ Suppose that $q < p$.\\
We have
		\begin{eqnarray*}
			\widetilde{\left\|f_{n}\right\|}^{q}_{q,\, q}
			&\leq & \sum\limits_{l \in \mathbb{Z}\,: \, Q_{l}\cap B(0,n) \ne \emptyset } \mu (Q_{l}) \displaystyle \int_{B(0,n)}  \left\vert f(x) \chi_{Q_{l}} (x) \right\vert^{q} d\mu(x) \\
			&\leq & \sum\limits_{l \in \mathbb{Z} \, : \, Q_{l}\cap B(0,n) \ne \emptyset } (\mu (Q_{l}))^{\frac{1}{s} + \frac{1}{s^{'}}} \displaystyle \int_{B(0,n)}  \left\vert f(x) \chi_{Q_{l}} (x) \right\vert^{q} d\mu(x),
		\end{eqnarray*}
		where $s= \frac{p}{q}$ and $s^{\prime}= \frac{p}{p-q}$. Then, it follows from H\"older inequality that
		$$ \widetilde{\left\|f_{n}\right\|}^{q}_{q,\, q} \leq \left[ \sum\limits_{l \in \mathbb{Z}: Q_{l}\cap B(0,n) \ne \emptyset } \mu (Q_{l})  \right]^{\frac{1}{s^{\prime}}} \left[\sum\limits_{l \in \mathbb{Z}} \mu (Q_{l}) \left\Vert f \chi_{Q_{l}} \right\Vert^{p}_{q} \right]^{\frac{q}{p}}.$$ Since the set $\{
		l  \in \mathbb{Z} \, : \,  Q_{l}\cap B(0,n) \ne \emptyset \}$ has
$2n$ elements, we have that
$$D_{n} = \left[ \sum\limits_{l \in \mathbb{Z}: Q_{l}\cap B(0,n) \ne \emptyset } \mu (Q_{l})  \right]^{\frac{1}{s^{\prime}q}}$$
is a positive real number satisfying $\widetilde{\left\|f_{n}\right\|}_{q,q} \leq D_{n}  \widetilde{\left\|f\right\|}_{q,\, p}$. It follows again from Proposition \ref{prop 3} that $f_{n} \in L^{q}(\mu)$.\\
b) We have,
\begin{eqnarray*}
\left\|f_{n} - f \right\|_{q,p}&= & \left\|  \left\|(f_{n}-f) \tau_{-y}^{\frac{1}{q}} \chi_{B(0,1)} \right\|_{q} \right\|_{p}\\
& =  & \left\|  \left\|f_{n} \tau_{-y}^{\frac{1}{q}} \chi_{B(0,1)} - f \tau_{-y}^{\frac{1}{q}} \chi_{B(0,1)} \right\|_{q} \right\|_{p},
\end{eqnarray*}
where $ \left\|  \left\|(f_{n}-f) \tau_{-y}^{\frac{1}{q}} \chi_{B(0,1)} \right\|_{q} \right\|_{p} $ denotes the $L^{p}(\mu)$-norm of the function
$y\mapsto \left\|  \left\|(f_{n}-f) \tau_{-y}^{\frac{1}{q}} \chi_{B(0,1)} \right\|_{q} \right\|_{p} $.\\
	Let $y \in \mathbb{R}$. Then $(f_{n} \tau_{-y}^{\frac{1}{q}} \chi_{B(0,1)})_{n \ge 1}$ is a $\mu$-measurable sequence of function which converges to $f \tau_{-y}^{\frac{1}{q}} \chi_{B(0,1)}$ $\mu$-almost everywhere. In addition, for $\mu$-almost every $y \in \mathbb{R}$,
	$$\left\vert f_{n} \tau_{-y}^{\frac{1}{q}} \chi_{B(0,1)} \right\vert  \leq \left\vert f \tau_{-y}^{\frac{1}{q}} \chi_{B(0,1)} \right\vert \in L^{q} (\mu) $$ because $f \in (L^{q}, L^{p}) (\mu)$. Hence, applying the dominated convergence theorem we obtain
$$\lim\limits_{n \to \infty} \left\|f_{n} \tau_{-y}^{\frac{1}{q}} \chi_{B(0,1)} - f \tau_{-y}^{\frac{1}{q}} \chi_{B(0,1)} \right\|_{q} =0.$$
Moreover,
$$ \left\|f_{n} \tau_{-y}^{\frac{1}{q}} \chi_{B(0,1)} - f \tau_{-y}^{\frac{1}{q}} \chi_{B(0,1)} \right\|_{q} \leq 2 \left\| f \tau_{-y}^{\frac{1}{q}} \chi_{B(0,1)} \right\|_{q} $$
 and the function $y\mapsto \left\| f \tau_{-y}^{\frac{1}{q}} \chi_{B(0,1)} \right\|_{q} $ belongs to $L^{p}(\mu)$ since $f \in (L^{q}, L^{p})(\mu)$. Hence, applying again the dominated convergence theorem we obtain
	$$ \lim\limits_{n \to \infty}  \left\|  \left\|f_{n} \tau_{-y}^{\frac{1}{q}} \chi_{B(0,1)} - f \tau_{-y}^{\frac{1}{q}} \chi_{B(0,1)} \right\|_{q} \right\|_{p} = 0.$$
From parts a) and b), we conclude that $L_{c}^{q} (\mu)$ is a dense subspace of $(L^{q}, \,  L^{p}) (\mu)$.	
\end{proof}
Let $1\leq q, p\leq\infty$. For all $r>0$, we define
$$
(L^{q}, L^{p})_{r} (\mu) = \left\{
\begin{array}{ll}
f \in L^{0}(\mu) : \quad  _{r}\Vert f \Vert_{q,p} < \infty
\end{array}
\right\}
$$
and for any $\mu$-measurable function $f$ on $\mathbb{R}$, we set
\begin{equation*}
	_{r}	\widetilde{\left\|f\right\|}_{q,p}=\left\{\begin{array}{lll}\left(\sum\limits_{l \in \mathbb{Z} } \mu (Q_{l}^{r} ) \left\Vert f \chi_{Q_{l}^{r}} \right\Vert^{p}_{q}\right)^{\frac{1}{p}}&\text{ if }&p<\infty , \\
			\sup\limits_{l \in \mathbb{Z}} \left\Vert f \chi_{Q_{l}^{r}} \right\Vert_{q}&\text{ si }&p=\infty,
		\end{array}\right.
		\end{equation*}
where $Q_{l}^{r}=[rl,\, rl+r)$.	\\
It is easy to see that an adaptation of the proofs of the propositions stated in \cite[Section 3]{NF} and those given in this section (by replacing $\mu(B_{1})$ and $Q_{l}$ by
$\mu(B_r)$ and $Q_{l}^r$ respectively) yields the following result.
\begin{prop}\label{pro-n}
Let $1\leq q, p\leq\infty$ and $r>0$. Then the following assertions hold.
\begin{enumerate}
\item $(L^{q}, L^{p})_{r} (\mu)$ is a complex subspace of the space $L^{0}(\mu)$.
\item The maps $f\mapsto \,  _{r}\Vert f \Vert_{q,p}$ and $f \mapsto \,  _{r}\widetilde{\left\|f\right\|}_{q,p}$ define norms on $(L^{q}, L^{p})_{r}(\mu)$.
\item The norms $  _{r}\Vert f \Vert_{q,p} $ and $ _{r}\widetilde{\left\|f\right\|}_{q,p} $ are equivalent.
\item $((L^{q}, L^{p})_{r} (\mu), \,  _{r}\Vert f \Vert_{q,p}) $ is a complex Banach space.
\item If $1\leq p_{1}\leq p_{2}\leq \infty$ then there exists a constant $C>0$ not depending on $r$ such that
$$_{r}\left\|f\right\|_{q,p_{2}} \leq C \mu(B(0,r))^{\frac{1}{p_{2}} - \frac{1}{p_{1}}} \: _{r}\left\|f\right\|_{q,p_{1}},\;\;\;\;\; f \in L^{0}(\mu).$$
\end{enumerate}
\end{prop}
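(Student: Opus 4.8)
The plan is to establish all five assertions by transcribing, with the substitutions $B_1\rightsquigarrow B_r$, $Q_l\rightsquigarrow Q_l^r$ and $I(y,1)\rightsquigarrow I(y,r)$, the arguments already carried out at the scale $r=1$ in Section 3 (borrowed from \cite{NF}) and in the present section. Two structural facts make this transcription legitimate and, more importantly, make every constant independent of $r$. First, $\mu$ is exactly homogeneous of degree $2k+2$: writing $rA=\{rx:x\in A\}$ one has $\mu(rA)=r^{2k+2}\mu(A)$, so in particular $\mu(B_r)=d_k r^{2k+2}$ and $\mu(Q_l^r)=r^{2k+2}\mu(Q_l)$. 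Second, the Dunkl translation is covariant under the dilation $D_r f(x)=f(rx)$ in the sense that $\tau_y\circ D_r=D_r\circ\tau_{ry}$; this follows from the defining relation $\mathcal F_k(\tau_y g)=\mathfrak E_k(i\,\cdot\,y)\mathcal F_k g$ together with $\mathcal F_k(D_r f)(\lambda)=r^{-(2k+2)}\mathcal F_k(f)(\lambda/r)$.

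I would first handle (1)--(3). For (1) and (2), homogeneity of $f\mapsto\,_r\|f\|_{q,p}$ in $f$ is immediate, the triangle inequality follows from two nested applications of Minkowski's inequality (inner in the $x$-variable for the $L^q(\mu)$-norm and outer in $y$ for the $L^p(\mu)$-norm), and positive-definiteness is obtained exactly as in the scale-$1$ treatment of \cite{NF}; closure under addition is then the triangle inequality, which yields the subspace property in (1). For (3), I would first record the $r$-analog of Lemma \ref{lem 20}: for $y\in Q_l^r$ one has $Q_l^r\subset I(y,r)\subset 3Q_l^r$, the index set $\{i:Q_i^r\cap I(y,r)\neq\emptyset\}$ has at most three elements, and $\mu$ is doubling with respect to $Q_l^r$ with a constant independent of $l$ and $r$ (the latter because $\mu$ is doubling for every interval with a uniform constant). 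The two-sided estimate $_r\widetilde{\|f\|}_{q,p}\approx\,_r\|f\|_{q,p}$ then follows line by line from the proof of Proposition \ref{prop 45}, the only nontrivial input being the scale-$r$ version of Proposition \ref{p2}, namely $C_1\int|f|^q\chi_{I(y,r)}\,d\mu\le\int\tau_y|f|^q\chi_{B_r}\,d\mu\le C_2\int|f|^q\chi_{I(y,r)}\,d\mu$ with the same $C_1,C_2$; this is deduced from Proposition \ref{p2} applied to $D_r f$ via the change of variables $x=rz$, using the homogeneity of $\mu$ and the covariance $\tau_y\circ D_r=D_r\circ\tau_{ry}$.

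Next I would treat (4) and (5). For (4), the equivalence proved in (3) reduces completeness to that of $\big((L^q,L^p)_r(\mu),\,_r\widetilde{\|\cdot\|}_{q,p}\big)$, which is isometric to the weighted $\ell^p$-direct sum $\bigoplus_{l\in\mathbb Z}\mu(Q_l^r)^{1/p}L^q(Q_l^r,\mu)$, a Banach space as an $\ell^p$-sum of the complete spaces $L^q(Q_l^r,\mu)$; equivalently one transports the scale-$1$ Banach-space result of \cite{NF} through the isomorphism $D_r$. For (5), I would argue through $_r\widetilde{\|\cdot\|}$: for $p_1\le p_2$ the inclusion $\ell^{p_1}\hookrightarrow\ell^{p_2}$ gives $_r\widetilde{\|f\|}_{q,p_2}\le\big[\sum_l\mu(Q_l^r)^{p_1/p_2-1}\mu(Q_l^r)\|f\chi_{Q_l^r}\|_q^{p_1}\big]^{1/p_1}$, after which the $r$-analog of Lemma \ref{rem 5}, $\mu(B_r)=\mu(I(0,r))\le C\mu(Q_l^r)$, lets me replace $\mu(Q_l^r)^{p_1/p_2-1}$ by $C\,\mu(B_r)^{p_1/p_2-1}$ (the exponent being non-positive) and factor out $\mu(B_r)^{1/p_2-1/p_1}$; passing back to $_r\|\cdot\|$ by (3) then finishes the proof.

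The main obstacle is the uniformity in $r$ of the constant $C$ in assertion (5), and underlying it the uniformity of the constants in the scale-$r$ versions of Proposition \ref{p2} and Lemma \ref{rem 5}. This is precisely where the exact homogeneity $\mu(rA)=r^{2k+2}\mu(A)$ and the covariance $\tau_y\circ D_r=D_r\circ\tau_{ry}$ are indispensable: they exhibit each scale-$r$ statement as the image under $D_r$ of the corresponding scale-$1$ statement, so the constants coincide with those already obtained and carry no hidden $r$-dependence, the entire $r$-dependence being displayed explicitly by the factor $\mu(B_r)^{1/p_2-1/p_1}$.
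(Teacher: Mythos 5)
Your proposal is correct and follows essentially the same route as the paper, which itself offers no written proof beyond the remark that the scale-$1$ arguments of Section~3 and \cite{NF} carry over upon replacing $\mu(B_1)$ and $Q_l$ by $\mu(B_r)$ and $Q_l^r$. You in fact go further than the paper by justifying the $r$-uniformity of the constants (via the homogeneity $\mu(rA)=r^{2k+2}\mu(A)$ and the dilation covariance $\tau_y\circ D_r=D_r\circ\tau_{ry}$), which is the one point assertion (5) genuinely requires and which the paper leaves implicit.
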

Now, we state some new properties of the spaces $\left(L^{q},L^{p}\right)^{\alpha}(\mu)$.\\
Using the same arguments as in the proof of Proposition \ref{prop 4} with $B_r$ instead of $B_1$,  it is easy to see  that H\"older inequality remains valid in the Dunkl-Fofana spaces. More precisely, we have the following
\begin{prop}	
	Let $(q_{1} , p_{1}, \alpha_{1} )$ and $(q_{2} , p_{2}, \alpha_{2})$ two elements of $[1, +\infty]^{3}$ such that $q_{1} \leq \alpha_{1} \leq p_{1}$ and $q_{2} \leq \alpha_{2} \leq p_{2}$. \\ If
	\begin{center}
		$\frac{1}{q_{1}} + \frac{1}{q_{2}} = \frac{1}{q} \leq 1$, \: $\frac{1}{p_{1}} + \frac{1}{p_{2}} = \frac{1}{p} \leq 1$ and $\frac{1}{\alpha_{1}} + \frac{1}{\alpha_{2}} = \frac{1}{\alpha}$,
	\end{center}
	then for all $f$ and $g$ of $L^{0} (\mu)$, we have
	$$\Vert f g\Vert_{q,p,\alpha} \leq \Vert f \Vert_{q_{1} , p_{1}, \alpha_{1} } \Vert g \Vert_{q_{2} , p_{2}, \alpha_{2}} .$$
\end{prop}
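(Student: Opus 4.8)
The plan is to lift the amalgam Hölder inequality of Proposition \ref{prop 4} to each fixed scale $r>0$ and then to absorb the weight $\mu(B_r)^{\frac1\alpha-\frac1q-\frac1p}$ into the supremum defining $\|\cdot\|_{q,p,\alpha}$. As a preliminary I would record that the exponents are compatible: from $\frac1q=\frac1{q_1}+\frac1{q_2}$, $\frac1p=\frac1{p_1}+\frac1{p_2}$ and $\frac1\alpha=\frac1{\alpha_1}+\frac1{\alpha_2}$ together with $q_i\le\alpha_i\le p_i$ one gets $\frac1q\ge\frac1\alpha\ge\frac1p$, i.e. $q\le\alpha\le p$, so that $\|fg\|_{q,p,\alpha}$ is meaningful.

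The core step is the scale-$r$ version of Proposition \ref{prop 4}: for every $r>0$,
$$\ _{r}\|fg\|_{q,p}\ \le\ \ _{r}\|f\|_{q_1,p_1}\ \ _{r}\|g\|_{q_2,p_2}.$$
I would obtain it by repeating the proof of Proposition \ref{prop 4} verbatim, with $B_r$ in place of $B_1$. Concretely, for $q<\infty$ and fixed $y$ one writes $|fg|^q=|f|^q|g|^q$ and applies Hölder's inequality to the inner integral $\int_{\mathbb R}\tau_y(|fg|^q)(x)\chi_{B_r}(x)\,d\mu(x)$ with the conjugate pair $\big(\frac{q_1}{q},\frac{q_2}{q}\big)$ (note $\frac q{q_1}+\frac q{q_2}=1$), using the positivity of $\tau_y$ on nonnegative functions already exploited in $(\ref{**})$. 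This bounds the inner integral by $\big(\int_{B_r}\tau_y|f|^{q_1}\,d\mu\big)^{q/q_1}\big(\int_{B_r}\tau_y|g|^{q_2}\,d\mu\big)^{q/q_2}$. Taking the $q$-th root and then the $L^p$-norm in $y$, a second Hölder inequality with the pair $\big(\frac{p_1}{p},\frac{p_2}{p}\big)$ (note $\frac p{p_1}+\frac p{p_2}=1$) gives the displayed estimate. The cases in which $q$ or $p$ equals $\infty$ are treated exactly as the $q=\infty$, resp. $p=\infty$, branches of Proposition \ref{prop 4}, with essential suprema replacing the corresponding integrals.

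Finally I would split the weight. Since
$$\frac1\alpha-\frac1q-\frac1p=\Big(\frac1{\alpha_1}-\frac1{q_1}-\frac1{p_1}\Big)+\Big(\frac1{\alpha_2}-\frac1{q_2}-\frac1{p_2}\Big),$$
multiplying the scale-$r$ inequality by $\mu(B_r)^{\frac1\alpha-\frac1q-\frac1p}$ and regrouping yields, for each $r>0$,
$$\mu(B_r)^{\frac1\alpha-\frac1q-\frac1p}\ _{r}\|fg\|_{q,p}\le\Big[\mu(B_r)^{\frac1{\alpha_1}-\frac1{q_1}-\frac1{p_1}}\ _{r}\|f\|_{q_1,p_1}\Big]\Big[\mu(B_r)^{\frac1{\alpha_2}-\frac1{q_2}-\frac1{p_2}}\ _{r}\|g\|_{q_2,p_2}\Big].$$
Taking the supremum over $r>0$ and using $\sup_r(A_rB_r)\le(\sup_r A_r)(\sup_r B_r)$ for nonnegative $A_r,B_r$, the right-hand side is dominated by $\|f\|_{q_1,p_1,\alpha_1}\|g\|_{q_2,p_2,\alpha_2}$, which is the assertion.

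The only non-routine ingredient is the scale-$r$ amalgam Hölder estimate, and this is precisely Proposition \ref{prop 4} with $B_1$ replaced by $B_r$; everything else is exponent bookkeeping and the submultiplicativity of the supremum. I expect the main care to lie in the boundary exponents ($q=\infty$ or $p=\infty$) and in invoking the positivity of the Dunkl translation on nonnegative functions at the inner Hölder step, both of which are already handled in the proof of Proposition \ref{prop 4}.
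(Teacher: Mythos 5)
Your proposal is correct and follows essentially the same route as the paper, which itself only remarks that one repeats the proof of Proposition \ref{prop 4} with $B_r$ in place of $B_1$ and then handles the weight $\mu(B_r)^{\frac{1}{\alpha}-\frac{1}{q}-\frac{1}{p}}$ via the exponent identity and the submultiplicativity of the supremum. Your write-up in fact spells out the weight-splitting and supremum step more explicitly than the paper does.
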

From Proposition \ref{prop 38} we deduce the following result.
\begin{prop}
 Let $1 \leq q <s \leq p<\infty$. Then
  $$\Vert f \Vert_{q,p,\alpha}  \leq C\Vert f \Vert_{ L^{\alpha, \infty}(\mu)} \quad \forall \: f \in  L^{0}(\mu),$$
where $C>0$ is a constant not depending on $f$.
\end{prop}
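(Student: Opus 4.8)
The plan is to identify the exponent $s$ of the hypothesis with the middle index $\alpha$ of the Dunkl--Fofana norm, so that what is really being proved is the embedding $L^{\alpha,\infty}(\mu)\hookrightarrow(L^{q},L^{p})^{\alpha}(\mu)$ for $1\le q<\alpha\le p<\infty$. Unwinding the definition,
$$\|f\|_{q,p,\alpha}=\sup_{r>0}\mu(B_r)^{\frac1\alpha-\frac1q-\frac1p}\,{}_{r}\|f\|_{q,p},$$
so the whole matter reduces to estimating the local amalgam quantity ${}_{r}\|f\|_{q,p}$ by a power of $\mu(B_r)$ times $\|f\|_{L^{\alpha,\infty}(\mu)}$, the power being chosen so as to cancel the prefactor $\mu(B_r)^{\frac1\alpha-\frac1q-\frac1p}$.

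First I would establish the $B_r$-analogue of Proposition~\ref{prop 38}, namely
$${}_{r}\|f\|_{q,p}\le C\,\mu(B_r)^{\frac1q+\frac1p-\frac1\alpha}\,\|f\|_{L^{\alpha,\infty}(\mu)},\qquad r>0.$$
Its validity is guaranteed by the remark preceding Proposition~\ref{pro-n}, according to which every statement of this section admits a version obtained by replacing $B_1$ with $B_r$. To pin down the exponent I would retrace the proof of Proposition~\ref{prop 38}: writing ${}_{r}\|f\|_{q,p}^{q}=\big\||f|^{q}\ast_{k}\chi_{B_r}\big\|_{L^{p/q}(\mu)}$ and invoking Proposition~\ref{prop 37} with $|f|^{q}\in L^{\alpha/q,\infty}(\mu)$ and $\chi_{B_r}\in L^{t}(\mu)$, where $\frac1t=1+q\big(\frac1p-\frac1\alpha\big)$, one obtains ${}_{r}\|f\|_{q,p}^{q}\le C\,\mu(B_r)^{1/t}\,\|f\|_{L^{\alpha,\infty}(\mu)}^{q}$, since $\|\chi_{B_r}\|_{L^{t}}=\mu(B_r)^{1/t}$ and $\||f|^{q}\|_{L^{\alpha/q,\infty}}=\|f\|_{L^{\alpha,\infty}}^{q}$. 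Taking $q$-th roots and noting that $\frac1{qt}=\frac1q+\frac1p-\frac1\alpha$ gives the displayed inequality.

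Finally I would substitute this bound into the supremum defining the Fofana norm. For every $r>0$,
$$\mu(B_r)^{\frac1\alpha-\frac1q-\frac1p}\,{}_{r}\|f\|_{q,p}\le C\,\mu(B_r)^{\left(\frac1\alpha-\frac1q-\frac1p\right)+\left(\frac1q+\frac1p-\frac1\alpha\right)}\|f\|_{L^{\alpha,\infty}(\mu)}=C\,\|f\|_{L^{\alpha,\infty}(\mu)},$$
the two powers of $\mu(B_r)$ cancelling exactly by construction. As the resulting bound no longer depends on $r$, passing to the supremum yields $\|f\|_{q,p,\alpha}\le C\|f\|_{L^{\alpha,\infty}(\mu)}$, which is the claim.

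The exponent arithmetic, together with the homogeneity $\mu(B_r)=d_{k}r^{2k+2}$, is routine; the only delicate point is the scaled Proposition~\ref{prop 38}, and within it the legitimate application of Proposition~\ref{prop 37}. For the convolution estimate to produce a genuine strong $L^{p/q}(\mu)$ bound, the exponent $t$ of $\chi_{B_r}$ must satisfy $t>1$ (so that both Lorentz exponents $\alpha/q$ and $t$ exceed $1$), and $t>1$ is precisely the condition $\alpha<p$. Thus the genuinely admissible range is $1\le q<\alpha<p<\infty$, the borderline $\alpha=p$ being excluded in accordance with the failure of the inclusion $L^{p,\infty}(\mu)\subset L^{p}(\mu)=(L^{q},L^{p})^{p}(\mu)$. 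Securing the correct power $\mu(B_r)^{1/q+1/p-1/\alpha}$ in the scaled estimate, rather than the final cancellation, is where the real work lies.
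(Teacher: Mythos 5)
Your argument is correct and is exactly the deduction the paper intends: the authors give no written proof beyond the phrase ``From Proposition \ref{prop 38} we deduce the following result,'' and the intended route is precisely your rescaled version of Proposition \ref{prop 38} (writing ${}_{r}\Vert f\Vert_{q,p}^{q}=\Vert\,|f|^{q}\ast_{k}\chi_{B_r}\Vert_{L^{p/q}(\mu)}$ and applying Proposition \ref{prop 37}), followed by the exact cancellation of the powers of $\mu(B_r)$ against the prefactor $\mu(B_r)^{\frac{1}{\alpha}-\frac{1}{q}-\frac{1}{p}}$. Your closing remark is also well taken: the hypothesis should read $1\leq q<\alpha<p<\infty$ (the letter $s$ plays no role in the conclusion), and the endpoint $\alpha=p$ must indeed be excluded, since $(L^{q},L^{p})^{p}(\mu)=L^{p}(\mu)$ while $L^{p,\infty}(\mu)\not\subset L^{p}(\mu)$.
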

The above result shows that $L^{\alpha,\infty}(\mu)$ is continuously embedded in $(L^q,L^p)^\alpha(\mu)$.\\
The following proposition asserts that endowed with the norm $\Vert \cdot \Vert_{q,p,\alpha}$, $ (L^{q}, L^{p})^{\alpha} (\mu)$ is a complex Banach space.
\begin{prop}
	Let $1\leq \alpha ,\, p,\, q \leq \infty$ with $q \leq  \alpha  \leq p$. Then $ \left((L^{q}, L^{p})^{\alpha} (\mu),\: \Vert \cdot \Vert_{q,p,\alpha} \right) $ is a complex Banach space.
\end{prop}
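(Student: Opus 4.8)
The plan is to exploit the completeness of each of the spaces $(L^{q},L^{p})_{r}(\mu)$ established in Proposition \ref{pro-n}(4) and to transfer it to the Fofana space by a limiting argument in the scale parameter $r$. Let $(f_{n})_{n}$ be a Cauchy sequence in $\left((L^{q},L^{p})^{\alpha}(\mu),\,\Vert\cdot\Vert_{q,p,\alpha}\right)$. First I would observe that, by the very definition of $\Vert\cdot\Vert_{q,p,\alpha}$, for every fixed $r>0$ one has
$$(\mu(B_{r}))^{\frac{1}{\alpha}-\frac{1}{q}-\frac{1}{p}}\;{}_{r}\Vert f_{n}-f_{m}\Vert_{q,p}\leq\Vert f_{n}-f_{m}\Vert_{q,p,\alpha}.$$
Since $\mu(B_{r})=d_{k}r^{2k+2}\in(0,\infty)$ is a fixed positive number for fixed $r$, this shows that $(f_{n})_{n}$ is a Cauchy sequence in the normed space $\left((L^{q},L^{p})_{r}(\mu),\,{}_{r}\Vert\cdot\Vert_{q,p}\right)$. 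By Proposition \ref{pro-n}(4) the latter is complete, so $(f_{n})_{n}$ converges in ${}_{r}\Vert\cdot\Vert_{q,p}$ to some $g^{(r)}\in(L^{q},L^{p})_{r}(\mu)$.

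The second step is to check that the limits $g^{(r)}$ do not depend on $r$. Using the equivalence ${}_{r}\Vert\cdot\Vert_{q,p}\simeq{}_{r}\widetilde{\Vert\cdot\Vert}_{q,p}$ of Proposition \ref{pro-n}(3), convergence in ${}_{r}\Vert\cdot\Vert_{q,p}$ forces $\Vert(f_{n}-g^{(r)})\chi_{Q_{l}^{r}}\Vert_{q}\to0$ for every $l$ (when $q<\infty$; for $q=\infty$ one argues with the corresponding suprema), whence $f_{n}\to g^{(r)}$ in $L^{q}$ on every bounded set and therefore, along a subsequence, $\mu$-almost everywhere. As this holds for each $r$, all the $g^{(r)}$ coincide $\mu$-a.e.\ with a single function $f:=g^{(1)}\in(L^{q},L^{p})(\mu)$, and consequently $f_{n}\to f$ in ${}_{r}\Vert\cdot\Vert_{q,p}$ for every $r>0$.

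Finally I would promote this to convergence in the $\alpha$-norm. Given $\varepsilon>0$, choose $N$ with $\Vert f_{n}-f_{m}\Vert_{q,p,\alpha}<\varepsilon$ for all $n,m\geq N$; then for every $r>0$ and every $m\geq N$,
$$(\mu(B_{r}))^{\frac{1}{\alpha}-\frac{1}{q}-\frac{1}{p}}\;{}_{r}\Vert f_{n}-f_{m}\Vert_{q,p}<\varepsilon.$$
Fixing $n\geq N$ and $r>0$ and letting $m\to\infty$, the continuity of the norm ${}_{r}\Vert\cdot\Vert_{q,p}$ (equivalently, Fatou's lemma applied to the defining integral, using the a.e.\ convergence of a subsequence) gives ${}_{r}\Vert f_{n}-f_{m}\Vert_{q,p}\to{}_{r}\Vert f_{n}-f\Vert_{q,p}$, so that
$$(\mu(B_{r}))^{\frac{1}{\alpha}-\frac{1}{q}-\frac{1}{p}}\;{}_{r}\Vert f_{n}-f\Vert_{q,p}\leq\varepsilon.$$
Taking the supremum over $r>0$ yields $\Vert f_{n}-f\Vert_{q,p,\alpha}\leq\varepsilon$ for all $n\geq N$; in particular $f-f_{N}\in(L^{q},L^{p})^{\alpha}(\mu)$, hence $f=(f-f_{N})+f_{N}$ lies in $(L^{q},L^{p})^{\alpha}(\mu)$ and $f_{n}\to f$ in $\Vert\cdot\Vert_{q,p,\alpha}$, which proves completeness.

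The step I expect to require the most care is the uniform (in $r$) passage to the limit $m\to\infty$ inside ${}_{r}\Vert f_{n}-f_{m}\Vert_{q,p}$, together with the verification that the $r$-dependent limits glue into one function $f$; the boundary cases $q=\infty$ or $p=\infty$, where the defining integrals are replaced by suprema, must be treated separately but follow the same scheme.
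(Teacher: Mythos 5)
Your proof is correct and follows essentially the same route as the paper: reduce to the completeness of the fixed-scale spaces $(L^{q},L^{p})_{r}(\mu)$ from Proposition \ref{pro-n} and then upgrade to the $\alpha$-norm by letting $m\to\infty$ in the Cauchy estimate and taking the supremum over $r$. You are in fact slightly more careful than the paper in checking that the limits $g^{(r)}$ obtained at different scales coincide $\mu$-a.e., a point the published argument passes over silently.
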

\begin{proof}
	Let $ (f_{j})_{j \in \mathbb{N}} $ be a Cauchy sequence of elements of $ (L^{q}, L^{p})^{\alpha} (\mu)$. \\ Fix $\epsilon > 0$. Then there exists an integer $N$ such that for all integers $m$ and $n$ we have
	$$  m, \, n > N \implies \Vert f_{m} - f_{n} \Vert_{q,p,\alpha}  < \epsilon. $$
	This implies that for $r$ fixed and for $m,\, n>N$,
	\begin{center}
		$\mu(B(0,r))^{\frac{1}{\alpha} - \frac{1}{q} - \frac{1}{p}} \: _{r}\Vert f_{m} - f_{n} \Vert_{q,p}  < \epsilon. $
	\end{center}
	Hence, $ (f_{j})_{j \in \mathbb{N}} $ is a Cauchy sequence of $ (L^{q}, L^{p})_{r} (\mu)$. Since
	$ (L^{q}, L^{p})_{r} (\mu)$ is a complex Banach space, $ (f_{j})_{j \in \mathbb{N}} $ converges to $f \in (L^{q}, L^{p})_{r} (\mu)$. In addition, for all $m,n > N$ we have
		$$ \vert \Vert f_{m} \Vert_{q,p,\alpha} - \Vert f_{n} \Vert_{q,p,\alpha} \vert \leq \Vert f_{m} - f_{n} \Vert_{q,p,\alpha} < \epsilon.$$
		
	Therefore, $(\Vert f_{j} \Vert_{q,p,\alpha})_{j \in \mathbb{N}}$ is a Cauchy sequence on $\mathbb{R}$. Let us note $M \ge 0$ its limit. It follows that for $m>N$,	
	\begin{eqnarray*}
		\mu (B(0,r))^{\frac{1}{\alpha} - \frac{1}{q} - \frac{1}{p}} \:_{r}\Vert f \Vert_{q,p}
		& \leq & \Vert f_{m} \Vert_{q,p,\alpha} + \mu (B(0,r))^{\frac{1}{\alpha} - \frac{1}{q} - \frac{1}{p}} \:_{r}\Vert f - f_{m} \Vert_{q,p}.
	\end{eqnarray*}
	By letting $m$ go to $\infty$, we get
		$$\mu (B(0,r))^{\frac{1}{\alpha} - \frac{1}{q} - \frac{1}{p}} \:_{r}\Vert f \Vert_{q,p} \leq M.$$ \\ Hence, $\Vert f \Vert_{q,p,\alpha} \leq M $ and $f \in  (L^{q}, L^{p})^{\alpha} (\mu)$. \\
	Let $m$ be an integer such that $m>N$. For all $r>0 $ and $l \in \mathbb{N}$, we have	
	\begin{eqnarray*}
		& & \mu (B(0,r))^{\frac{1}{\alpha} - \frac{1}{q} - \frac{1}{p}} \:_{r}\Vert f_{m} -f \Vert_{q,p} \\
		& \leq & \Vert f_{m} - f_{m+l}  \Vert_{q,p,\alpha} + \mu (B(0,r))^{\frac{1}{\alpha} - \frac{1}{q} - \frac{1}{p}} \:_{r}\Vert f_{m+l} -f \Vert_{q,p}\\
		& \leq & \epsilon + \mu(B(0,r))^{\frac{1}{\alpha} - \frac{1}{q} - \frac{1}{p}} \:_{r}\Vert f_{m+l} -f \Vert_{q,p}.
	\end{eqnarray*}
Since	
$$ \lim\limits_{l \rightarrow	  \infty}  \mu (B(0,r))^{\frac{1}{\alpha} - \frac{1}{q} - \frac{1}{p}} \:_{r}\Vert f_{m+l} -f \Vert_{q,p}  = 0,$$
we have
	 $$ \Vert f_{m} - f \Vert_{q,p,\alpha}  \leq \epsilon .$$
This ends the proof.	
\end{proof}
The family of spaces $\left(L^{q},L^{p}\right)^{\alpha}(\mu)$ is increasing with respect to the $p$ power as stated below.
\begin{prop}
Let $ 1 \leq q \leq \alpha \leq p_{1} \leq p_{2} \leq  \infty$. There exists a constant $C>0$ such that
$$ \Vert f \Vert_{q,p_{2},\alpha} \leq C \Vert f \Vert_{q,p_{1}, \alpha},\;\;\;\;\;\;f \in L^{0}(\mu), $$
and consequently, $\left(L^{q},L^{p_{1}}\right)^{\alpha}(\mu) \subset  \left(L^{q},L^{p_{2}}\right)^{\alpha}(\mu)$.
\end{prop}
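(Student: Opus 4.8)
The plan is to reduce the statement to the $r$-uniform comparison between the local norms $\,_{r}\|\cdot\|_{q,p_{2}}$ and $\,_{r}\|\cdot\|_{q,p_{1}}$ already available in Proposition \ref{pro-n}(5), and then to exploit the fact that the power of $\mu(B_r)$ produced by that estimate exactly reconciles the two distinct Fofana normalizing factors. Since there is no interplay with the variable $r$ beyond this cancellation, the whole argument amounts to a one-line computation followed by taking a supremum.

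First I would fix $f \in L^{0}(\mu)$ and an arbitrary $r>0$. As $p_{1}\leq p_{2}$, Proposition \ref{pro-n}(5) furnishes a constant $C>0$, \emph{independent of} $r$, such that
$$
\,_{r}\|f\|_{q,p_{2}} \leq C\,\mu(B(0,r))^{\frac{1}{p_{2}}-\frac{1}{p_{1}}}\,_{r}\|f\|_{q,p_{1}}.
$$
Multiplying both sides by $(\mu(B_r))^{\frac{1}{\alpha}-\frac{1}{q}-\frac{1}{p_{2}}}$ and combining the exponents via
$\left(\frac{1}{\alpha}-\frac{1}{q}-\frac{1}{p_{2}}\right)+\left(\frac{1}{p_{2}}-\frac{1}{p_{1}}\right)=\frac{1}{\alpha}-\frac{1}{q}-\frac{1}{p_{1}}$, I obtain
$$
(\mu(B_r))^{\frac{1}{\alpha}-\frac{1}{q}-\frac{1}{p_{2}}}\,_{r}\|f\|_{q,p_{2}}
\leq C\,(\mu(B_r))^{\frac{1}{\alpha}-\frac{1}{q}-\frac{1}{p_{1}}}\,_{r}\|f\|_{q,p_{1}}.
$$

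Taking the supremum over $r>0$ on both sides then finishes the argument: the left-hand side is by definition $\|f\|_{q,p_{2},\alpha}$, while the right-hand side, because $C$ is independent of $r$, is bounded by $C\,\|f\|_{q,p_{1},\alpha}$. This yields the claimed inequality $\|f\|_{q,p_{2},\alpha}\leq C\,\|f\|_{q,p_{1},\alpha}$, and the inclusion $\left(L^{q},L^{p_{1}}\right)^{\alpha}(\mu)\subset\left(L^{q},L^{p_{2}}\right)^{\alpha}(\mu)$ follows immediately, since any $f$ with $\|f\|_{q,p_{1},\alpha}<\infty$ then has $\|f\|_{q,p_{2},\alpha}<\infty$.

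There is no genuine obstacle here; the only point demanding care is that the comparison constant in Proposition \ref{pro-n}(5) be truly uniform in $r$, for otherwise one could not pass to the supremum and the normalizing powers of $\mu(B_r)$ would not cancel cleanly. Since that uniformity is exactly what item (5) asserts, the computation goes through verbatim.
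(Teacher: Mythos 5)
Your argument is correct and is essentially the paper's own proof: the authors likewise invoke Proposition \ref{pro-n}(5) for the $r$-uniform comparison $\,_{r}\|f\|_{q,p_{2}} \leq C\,\mu(B_r)^{\frac{1}{p_{2}}-\frac{1}{p_{1}}}\,_{r}\|f\|_{q,p_{1}}$, multiply by the appropriate power of $\mu(B_r)$, and take the supremum over $r$. The only cosmetic difference is that the paper treats $p_{2}<\infty$ and $p_{2}=\infty$ as separate cases, whereas your single computation covers both.
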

\begin{proof}
Let $f \in L^{0}(\mu)$. Since for $p_{1} = p_{2}$ the result is obvious, we give the proof for $p_{1} < p_{2}$.\\
Let $r>0$.\\
$\bullet$ Suppose that $p_{2}< \infty$.\\
According to Proposition \ref{pro-n}, there exists a constant $C>0$, not depending on $r$ and $f$, such that
$$_{r}\left\|f\right\|_{q,p_{2}} \leq C \mu(B(0,r))^{\frac{1}{p_{2}} - \frac{1}{p_{1}}} \: _{r}\left\|f\right\|_{q,p_{1}}.$$
This implies that
$$\mu(B(0,r))^{-\frac{1}{p_{2}}}\: _{r}\left\|f\right\|_{q,p_{2}} \leq C \mu(B(0,r))^{ - \frac{1}{p_{1}}} \: _{r}\left\|f\right\|_{q,p_{1}}. $$
Multiplying the both sides of the above inequality by $(\mu_{k} (I(0,r)))^{\frac{1}{\alpha} -\frac{1}{q}}$, we get
$$\mu(B(0,r))^{\frac{1}{\alpha} -\frac{1}{q} -\frac{1}{p_{2}}}\: _{r}\left\|f\right\|_{q,p_{2}} \leq C \mu(B(0,r))^{ \frac{1}{\alpha} -\frac{1}{q} - \frac{1}{p_{1}}} \: _{r}\left\|f\right\|_{q,p_{1}}. $$
Hence $ \quad \Vert f \Vert_{q,p_{2},\alpha} \leq C \Vert f \Vert_{q,p_{1}, \alpha}$. \\
$\bullet$ Suppose that $p_{2}=\infty$.\\
By Proposition \ref{pro-n}, there exists a constant $C>0$, not depending on $r$ and $f$, such that
$$  _{r}\Vert f \Vert_{q, \infty} \leq C (\mu (B(0,r)))^{-\frac{1}{p_{1}}} \: _{r}\Vert f \Vert_{q,p_{1}}.$$
This implies that
 $$\mu(B(0,r))^{\frac{1}{\alpha} -\frac{1}{q}}\: _{r}\left\|f\right\|_{q,\infty} \leq C \mu(B(0,r))^{ \frac{1}{\alpha} -\frac{1}{q} - \frac{1}{p_{1}}} \: _{r}\left\|f\right\|_{q,p_{1}}. $$
Hence $ \quad \Vert f \Vert_{q, \infty,\alpha} \leq C \Vert f \Vert_{q,p_{1}, \alpha}$. \\
We conclude that $\left(L^{q},L^{p_{1}}\right)^{\alpha}(\mu) \subset  \left(L^{q},L^{p_{2}}\right)^{\alpha}(\mu)$.
\end{proof}
\section{Proof of the main results}
For $1< q < \infty $, we will denote by $q'$ its conjugate exponent: $\frac{1}{q}+ \frac{1}{q'}=1$. We will need the following results.
\begin{prop}
\label{maxi}(\cite{NF})
Let $1< q\leq \alpha\leq p\leq \infty$.
There exists a constant $C>0$ such that
$$\left\|Mf\right\|_{q,p,\alpha}\leq C\left\|f\right\|_{q,p,\alpha}, \ \ f\in L_{loc}^{1}(\mu).$$
\end{prop}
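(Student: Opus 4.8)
The plan is to reduce everything to a single scale $r>0$, bound ${}_{r}\|Mf\|_{q,p}$, then multiply by $\mu(B_r)^{\frac1\alpha-\frac1q-\frac1p}$ and take the supremum over $r$. First I would replace the amalgam norm by its sliding--window form: by Proposition \ref{p2} (rescaled to radius $r$, which is legitimate by the remark preceding Proposition \ref{pro-n}), the inner quantity $\int_{\mathbb R}\tau_y|g|^{q}\chi_{B_r}\,d\mu$ is comparable to $\int_{I(y,r)}|g|^{q}\,d\mu=\|g\chi_{I(y,r)}\|_q^{q}$, so that ${}_{r}\|Mf\|_{q,p}$ is comparable to $\big\|\,\|(Mf)\chi_{I(\cdot,r)}\|_q\,\big\|_p$. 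Fixing a window center $y$, I would split $f=f\chi_{I(y,3r)}+f\chi_{I(y,3r)^{c}}=:g_y+h_y$ and use $Mf\le Mg_y+Mh_y$ on $I(y,r)$.

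For the local piece I would invoke the boundedness of the Dunkl--Hardy--Littlewood maximal operator on $L^{q}(\mu)$ for $q>1$ (a standard fact, see \cite{GM}, \cite{M. G.}), giving $\|(Mg_y)\chi_{I(y,r)}\|_q\le\|Mg_y\|_q\le C\|f\chi_{I(y,3r)}\|_q$. Taking the $L^{p}(\mu)$-norm in $y$ and using the window reformulation at scale $3r$ returns $C\,{}_{3r}\|f\|_{q,p}$, which by the very definition of the Fofana norm is at most $C\mu(B_{3r})^{\frac1q+\frac1p-\frac1\alpha}\|f\|_{q,p,\alpha}$. Multiplying by $\mu(B_r)^{\frac1\alpha-\frac1q-\frac1p}$ the scale factors collapse, since $\mu(B_{3r})/\mu(B_r)=3^{2k+2}$, so the local contribution is bounded by $C\|f\|_{q,p,\alpha}$ uniformly in $r$.

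The global piece is where the real work lies. For $x\in I(y,r)$ the support of $\tau_x\chi_{B_s}$ in $B(x,s)$ forces only radii $s\gtrsim r$ to contribute to $Mh_y(x)$. Writing the average as $\int|h_y|\,\tau_x\chi_{B_s}\,d\mu$ by self-adjointness of $\tau_x$ and evenness of $\chi_{B_s}$, and applying H\"older against the positive measure $\tau_x\chi_{B_s}\,d\mu$ (of total mass $\mu(B_s)$ and pointwise $\le 1$ by (\ref{**})), I would obtain
$$Mh_y(x)\le C\sup_{s\ge 2r}\mu(B_s)^{-\frac1q}\Big(\int_{B(0,s)}\tau_x|h_y|^{q}\,d\mu\Big)^{\frac1q}\le C\sup_{j\ge 1}\mu(B_{2^{j}r})^{-\frac1q}\|f\chi_{I(y,2^{j+1}r)}\|_q,$$
after one more use of the rescaled Proposition \ref{p2}. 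Here the monotonicity of $(L^{q},L^{p})^{\alpha}(\mu)$ in $p$, i.e.\ the embedding into $(L^{q},L^{\infty})^{\alpha}(\mu)$, bounds each window $L^{q}$-norm by $C\mu(B_{2^{j+1}r})^{\frac1q-\frac1\alpha}\|f\|_{q,\infty,\alpha}$, and since $q<\alpha$ the factor $\mu(B_{2^{j}r})^{-\frac1\alpha}$ decays geometrically in $j$.

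The delicate point, which I expect to be the main obstacle, is to pass from this pointwise control on each window to the $L^{p}(\mu)$-norm in $y$ without losing summability: because $\mu$ is not translation invariant, the factor $\mu(I(y,r))^{1/q}$ produced by integrating $(Mh_y)^{q}$ over $I(y,r)$ is genuinely $y$-dependent, so one cannot simply pull the pointwise bound out of the norm. I would handle this by keeping the dyadic sum inside, using Minkowski's integral inequality to interchange it with the $L^{p}(\mu)$-norm, recognising each summand through Proposition \ref{prop 45} and the doubling of $\mu$ with respect to the blocks $Q_{l}^{\,r}$ as a genuine window/Fofana quantity at scale $2^{j+1}r$, and exploiting that the same exponent gap $\frac1q-\frac1\alpha>0$ which produced geometric decay at a point survives the $y$-integration and sums to $C\|f\|_{q,p,\alpha}$. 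Finally the case $p=\infty$ is simpler: the inner $L^{q}$-integration over $I(y,r)$ is replaced by a supremum over windows, and the same local/global splitting yields the bound directly from the pointwise estimate above.
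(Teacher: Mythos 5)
First, a point of order: the paper you were given does not prove this proposition at all --- it imports it verbatim from \cite{NF} --- so there is no in-house argument to measure yours against, and your sketch has to be judged on its own. Its architecture and its local part are sound, but the global part contains a genuine gap, and it sits exactly at the ``delicate point'' you flag. H\"older against the measure $\tau_x\chi_{B_s}\,d\mu$ naturally produces the \emph{Dunkl} window $\bigl(\int_{B_s}\tau_x(|f|^q)\,d\mu\bigr)^{1/q}$; when you replace this by the Euclidean window $\Vert f\chi_{I(x,s)}\Vert_q$ you give up an unbounded factor, since the two quantities differ by roughly $\bigl(\mu(I(x,s))/\mu(B_s)\bigr)^{1/q}\approx(|x|/s)^{(2k+1)/q}$. (Be careful with Proposition \ref{p2} here: as printed, with un-normalized integrals over $I(y,1)$ on both sides, it fails already for $f\equiv 1$, because $\int\tau_y(1)\chi_{B_1}\,d\mu=\mu(B_1)$ is constant in $y$ while $\mu(I(y,1))\to\infty$; the usable version must carry the normalization $\mu(B_1)/\mu(I(y,1))$, and that normalization is precisely the factor at stake.) Your proposed repair does not recover the loss: after Minkowski in $j$, the $j$-th summand is comparable to $\sum_{l}\mu(Q_l^r)^{1+p/q}\Vert f\chi_{2^jQ_l^r}\Vert_q^p$, and regrouping the blocks $Q_l^r$ lying in a block $Q_m^{2^jr}$ leaves $\sum_m\mu(Q_m^{2^jr})^{p/q}\,\mu(Q_m^{2^jr})\Vert f\chi_{Q_m^{2^jr}}\Vert_q^p$, where the weight $\mu(Q_m^{2^jr})^{p/q}$ is unbounded in $m$ and is not dominated by the $\mu(B_r)^{p/q}\bigl({}_{2^jr}\widetilde{\Vert f\Vert}_{q,p}\bigr)^p$ that the target requires. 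The exponent gap $\frac1q-\frac1\alpha>0$ controls the sum over scales $j$, not this growth in the window centre. A secondary weak point of the same part: cutting $f$ off outside the Euclidean interval $I(y,3r)$ does not by itself kill the small radii in $Mh_y$ on $I(y,r)$, because $B(x,s)$ contains points near $-x$ for every $s>0$; justifying ``only $s\gtrsim r$ contributes'' again hinges on the two-sided window comparison whose exact form is the issue above.

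Both difficulties disappear if you split in the \emph{radius} of the averages rather than in the support of $f$, and never leave the Dunkl-convolution framework. Write $Mf\le M_{<2r}f+M_{\ge 2r}f$ according to whether $s<2r$ or $s\ge 2r$; the local piece is handled essentially as you do. For the global piece, H\"older gives the pointwise bound $M_{\ge2r}f\le C\sum_{j\ge1}\mu(B_{2^jr})^{-1/q}\bigl(|f|^q\ast_{k}\chi_{B_{2^{j+1}r}}\bigr)^{1/q}$, and the key is the elementary inequality $\chi_{B_R}\ast_{k}\chi_{B_r}\le\mu(B_r)\chi_{B_{R+r}}$ (valid because $\tau_z\chi_{B_R}$ is nonnegative, bounded by $1$ and supported in $B(z,R)$, by (\ref{**})), which by associativity and the monotonicity of $g\mapsto |f|^q\ast_k g$ on nonnegative even $g$ yields
$$\bigl(|f|^q\ast_{k}\chi_{B_R}\bigr)\ast_{k}\chi_{B_r}\le\mu(B_r)\,|f|^q\ast_{k}\chi_{B_{R+r}},$$
hence $\bigl\Vert\bigl((|f|^q\ast_{k}\chi_{B_R})\ast_{k}\chi_{B_r}\bigr)^{1/q}\bigr\Vert_p\le\mu(B_r)^{1/q}\;{}_{R+r}\Vert f\Vert_{q,p}\le C\mu(B_r)^{1/q}\mu(B_{2R})^{\frac1q+\frac1p-\frac1\alpha}\Vert f\Vert_{q,p,\alpha}$. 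Summing over $R=2^{j+1}r$ against the weights $\mu(B_{2^jr})^{-1/q}$ produces $C\mu(B_r)^{1/q}\Vert f\Vert_{q,p,\alpha}\sum_{j}\mu(B_{2^jr})^{\frac1p-\frac1\alpha}$, a convergent geometric series precisely when $\alpha<p$ (the endpoints $\alpha=q$ and $\alpha=p$ reduce to Lebesgue spaces, where the result is classical), and the total is $C\mu(B_r)^{\frac1q+\frac1p-\frac1\alpha}\Vert f\Vert_{q,p,\alpha}$ with constants independent of $r$ and of the window centre. This is, in substance, the Dunkl transcription of Feuto's argument in \cite{Fe}, and is presumably what \cite{NF} does; I recommend you rebuild the global estimate along these lines rather than through the Euclidean windows.
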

\begin{prop}\label{weakmaxi}(\cite{NF})
Let $1\leq \alpha\leq p\leq \infty$. There exists  $C>0$ such that
$$ \Vert M f \Vert_{(L^{1, \infty}, L^{p})^{\alpha} (\mu)} \leq C \Vert f \Vert_{1,p,\alpha}, \ \ f\in L_{loc}^{1}(\mu).$$
\end{prop}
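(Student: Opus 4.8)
The plan is to prove the estimate scale by scale. Fixing $r>0$, I would bound the inner quantity $\big\|\,\|Mf\,\tau_{(-\cdot)}\chi_{B_r}\|_{L^{1,\infty}(\mu)}\,\big\|_{p}$, then multiply by $\mu(B(0,r))^{\frac1\alpha-1-\frac1p}$ and take the supremum over $r>0$, in accordance with the definition of $\|\cdot\|_{(L^{1,\infty},L^p)^\alpha(\mu)}$. The only analytic input needed beyond the organizational scheme already underlying Proposition \ref{maxi} is the weak-type $(1,1)$ inequality for the Dunkl Hardy--Littlewood maximal operator, $\|Mg\|_{L^{1,\infty}(\mu)}\le C\|g\|_{1}$. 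Two elementary facts are used repeatedly: first, $0\le\tau_{-y}\chi_{B_r}\le 1$ with $\int_{\mathbb R}\tau_{-y}\chi_{B_r}\,d\mu=\mu(B_r)$, exactly as in the computation proving Proposition \ref{prop 3}; second, by the support property of the translation and Proposition \ref{p2}, one has the pointwise comparison $Mf(x)\le C\sup_{\rho>0}\mu(B(0,\rho))^{-1}\int_{B(x,\rho)}|f|\,d\mu$.

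For fixed $y$ and $r$ I would split $f=f\chi_{\widetilde B}+f\chi_{\widetilde B^c}$, where $\widetilde B=B(y,Kr)$ is a fixed dilate large enough to respect the support geometry of the translations. Since $L^{1,\infty}(\mu)$ is only a quasi-normed space, the two pieces cannot be added directly; instead I would use $\{Mf>\lambda\}\subset\{M(f\chi_{\widetilde B})>\lambda/2\}\cup\{M(f\chi_{\widetilde B^c})>\lambda/2\}$, which gives $\|Mf\,w\|_{L^{1,\infty}(\mu)}\le 2\big(\|M(f\chi_{\widetilde B})\,w\|_{L^{1,\infty}(\mu)}+\|M(f\chi_{\widetilde B^c})\,w\|_{L^{1,\infty}(\mu)}\big)$ for $w=\tau_{-y}\chi_{B_r}$. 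For the local part, the weak $(1,1)$ bound with $w\le 1$ yields $\|M(f\chi_{\widetilde B})\,w\|_{L^{1,\infty}(\mu)}\le C\int_{B(y,Kr)}|f|\,d\mu$; taking the $L^p$-norm in $y$ recognizes the right-hand side, via Proposition \ref{p2}, as comparable to $\,{}_{Kr}\|f\|_{1,p}\le C\mu(B_{Kr})^{1+\frac1p-\frac1\alpha}\|f\|_{1,p,\alpha}=C\mu(B_r)^{1+\frac1p-\frac1\alpha}\|f\|_{1,p,\alpha}$, so that after inserting the factor $\mu(B_r)^{\frac1\alpha-1-\frac1p}$ this contribution is $\le C\|f\|_{1,p,\alpha}$.

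For the far part the essential point is to keep the averages \emph{local in} $y$. For $x\in B(y,r)$ only radii $\rho\gtrsim Kr$ contribute to $M(f\chi_{\widetilde B^c})(x)$, and the comparison above gives $M(f\chi_{\widetilde B^c})(x)\le C\sup_{j\ge 1}a_j(y)$ with $a_j(y)=\mu(B_{2^jr})^{-1}\int_{B(y,2^jr)}|f|\,d\mu$. Since $\int w\,d\mu=\mu(B_r)$, this gives $\|M(f\chi_{\widetilde B^c})\,w\|_{L^{1,\infty}(\mu)}\le C\,\mu(B_r)\sup_{j\ge 1}a_j(y)$. Taking the $L^p$-norm in $y$, bounding the supremum by the sum, and using $\|a_j\|_{p}\simeq\mu(B_{2^jr})^{-1}\,{}_{2^jr}\|f\|_{1,p}\le\mu(B_{2^jr})^{\frac1p-\frac1\alpha}\|f\|_{1,p,\alpha}$ together with $\mu(B_{2^jr})=2^{j(2k+2)}\mu(B_r)$, I obtain a geometric series of ratio $2^{(2k+2)(\frac1p-\frac1\alpha)}<1$ (when $\alpha<p$), whose sum is $\le C\mu(B_r)^{\frac1p-\frac1\alpha}\|f\|_{1,p,\alpha}$; multiplying back by the factor $\mu(B_r)$ and by the normalization $\mu(B_r)^{\frac1\alpha-1-\frac1p}$ yields $\le C\|f\|_{1,p,\alpha}$ for this contribution as well. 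Adding the two contributions and taking the supremum over $r$ completes the argument.

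I expect the real work to lie in three delicate points. First, every comparison constant --- in \eqref{**}, in Proposition \ref{p2}, and in the doubling of $\mu$ --- must be shown independent of the scale $r$ and of the centre $y$; this uniformity is precisely what the $r$-rescaled framework of Proposition \ref{pro-n} is designed to guarantee. Second, because $\mu$ is not translation invariant, $B(y,r)$ carries wildly different mass for $|y|\lesssim r$ and for $|y|\gg r$, so the far-part averages must be kept genuinely local in $y$ as above rather than replaced by a global supremum, which would destroy the $L^p_y$-summability. Third, the endpoints $\alpha=p$ (where the geometric series degenerates and $(L^1,L^p)^p(\mu)=L^p(\mu)$, so the statement reduces to a known weak-type bound) and $p=\infty$ (where the $L^p_y$-norm becomes a supremum and the sum over $j$ is replaced by a supremum) require separate, though routine, treatment.
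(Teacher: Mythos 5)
The paper does not prove this proposition at all: it is imported verbatim from \cite{NF}, so there is no in-text argument to compare yours against. That said, your scheme is the standard local/global decomposition by which such results are proved, and the steps check out: the pointwise bound $Mf(x)\le\sup_{\rho}\mu(B_\rho)^{-1}\int_{B(x,\rho)}|f|\,d\mu$ follows from self-adjointness of $\tau_x$ together with $0\le\tau_{-x}\chi_{B_\rho}\le 1$ and the support property; the far part indeed sees only radii $\rho>(K-1)r$ when $x\in B(y,r)$ because $B(x,\rho)\subset B(y,\rho+r)$; and the geometric series in $2^{j(2k+2)(\frac1p-\frac1\alpha)}$ converges exactly when $\alpha<p$, with the endpoints reducing as you say. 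One detail you should make explicit rather than bury inside ``via Proposition \ref{p2}'': your averages $a_j(y)$ are taken over the Dunkl ball $B(y,2^jr)$, which is a union of the two Euclidean intervals contained in $I(y,2^jr)\cup I(-y,2^jr)$, whereas Proposition \ref{p2} compares the amalgam integrand only with $\int_{I(y,1)}|f|^q\,d\mu$. So the passage $\|a_j\|_p\lesssim\mu(B_{2^jr})^{-1}\,{}_{2^jr}\|f\|_{1,p}$ needs the two-interval splitting plus the symmetry of $\mu$ under $y\mapsto-y$ (exactly the device the paper uses, via Lemma 6.2 of \cite{NF}, in the proof of Proposition \ref{prop 45}), on top of the scale-uniform version of Proposition \ref{p2} that you correctly flag. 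With that point spelled out, your reconstruction is a complete and correct proof.
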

\begin{proof}[Proof of Theorem \ref{theo 01}]
Let  $1 < q \leq \alpha \leq p < \infty$, $0<\beta< \frac{2k+2}{\alpha}$ and $f \in (L^{q}, L^{p})^{\alpha} (\mu)$.
We assume that  $f\ne 0$ $\mu$-almost everywhere, otherwise the result is obvious.\\
Let $r>0$ and $x \in \mathbb{R}$. Then
\begin{eqnarray*}
 I_{\beta} f(x)  & = & \displaystyle \int_{B(0,r)} \tau_{x} f (z) \vert z \vert^{\beta - 2k-2} d\mu (z) + \\
  & & \displaystyle \int_{B(0,r)^{c}} \tau_{x} f (z) \vert z \vert^{\beta - 2k-2} d\mu (z) = A(x) +B(x).
\end{eqnarray*}
Following the ideas of part (1) of the proof of Theorem 4 in \cite{GM}, we have
\begin{eqnarray*}
\vert A(x) \vert
 &=& \left\vert \sum^{\infty}_{i=0} \displaystyle \int_{B(0,2^{-i}r)\setminus{B(0,2^{-i-1}r)}} \tau_{x} f (z)  \vert z \vert^{\beta - 2k-2} d\mu (z) \right\vert \\
 &=& \left\vert \sum^{\infty}_{i=0} \displaystyle \int_{\mathbb{R}} f (z) \tau_{-x}(\vert \cdot \vert^{\beta - 2k-2} \chi_{B(0,2^{-i}r)\setminus{B(0,2^{-i-1}r)}})(z) d\mu(z) \right\vert \\
 &\leq& \sum^{\infty}_{i=0} \displaystyle \int_{\mathbb{R}}  \vert f (z) \vert \tau_{-x}(\vert \cdot \vert^{\beta - 2k-2} \chi_{B(0,2^{-i}r)\setminus{B(0,2^{-i-1}r)}})(z) d\mu (z)\\
 & & \text{( according to \cite[Theorem 6.3.4]{DX}).}
\end{eqnarray*}
It follows that
\begin{eqnarray*}
\vert A(x) \vert &\leq& \sum^{\infty}_{i=0} \displaystyle \int_{B(0,2^{-i}r)\setminus{B(0,2^{-i-1}r)}} \tau_{x} \vert f (z) \vert \vert z \vert^{\beta - 2k-2} d\mu(z) \\
&\leq& \sum^{\infty}_{i=0} (2^{-i-1}r)^{\beta - 2k-2} d_{k}(2^{-i}r)^{2k+2} M f(x) \\
&\leq& d_{k}2^{2k+2-\beta} r^{\beta} M f(x)\sum^{\infty}_{i=0} (2^{-1})^{\beta}.
\end{eqnarray*}
Hence,
\begin{eqnarray} \label{rel 4}
\vert A(x) \vert \leq C r^{\beta} M f(x).
\end{eqnarray}
On the other hand, we have
 \begin{eqnarray*}
& & \vert B(x) \vert \\
 &=& \left\vert \sum^{\infty}_{i=0} \displaystyle \int_{B(0,2^{i+1}r)\setminus{B(0,2^{i}r)}} \tau_{x} f (z)  \vert z \vert^{\beta - 2k-2} d\mu (z) \right\vert\\
 &=&\left\vert \sum^{\infty}_{i=0} \displaystyle \int_{\mathbb{R}} f (z) \tau_{-x}(\vert \cdot \vert^{\beta - 2k-2} \chi_{B(0,2^{i+1}r)\setminus{B(0,2^{i}r)}})(z) d\mu (z) \right\vert.
 \end{eqnarray*}
 By H\"older inequality we have
 \begin{eqnarray*}
& & \left\vert \sum^{\infty}_{i=0} \displaystyle \int_{\mathbb{R}} f (z) \tau_{-x}(\vert \cdot \vert^{\beta - 2k-2} \chi_{B(0,2^{i+1}r)\setminus{B(0,2^{i}r)}})(z) d\mu (z) \right\vert\\
 &\leq& \sum^{\infty}_{i=0} \displaystyle \int_{\mathbb{R}}  \vert f (z) \vert \tau_{-x}(\vert \cdot \vert^{\beta - 2k-2} \chi_{B(0,2^{i+1}r)\setminus{B(0,2^{i}r)}})^{\frac{1}{q}+\frac{1}{q^{'}}}(z) d\mu (z) \\
   &\leq& \sum^{\infty}_{i=0} \left(  \displaystyle \int_{\mathbb{R}}  \vert f (z) \vert^{q} \tau_{-x}(\vert \cdot \vert^{\beta - 2k-2} \chi_{B(0,2^{i+1}r)\setminus{B(0,2^{i}r)}})(z) d\mu (z)         \right)^{\frac{1}{q}}\\ &\times& \left( \displaystyle \int_{\mathbb{R}} \tau_{-x}(\vert \cdot \vert^{\beta - 2k-2} \chi_{B(0,2^{i+1}r)\setminus{B(0,2^{i}r)}})(z) d\mu (z) \right)^{\frac{1}{q^{'}}}.
 \end{eqnarray*}
So, thanks to \cite[Theorem 6.3.4]{DX}, we write that
\begin{eqnarray*}
 \vert B(x) \vert
 &\leq& \sum\limits^{\infty}_{i=0} \left(  \displaystyle \int_{2^i  r\leq \vert z\vert <2^{i+1} r} \tau_{x} \vert f (z) \vert^{q} \vert z \vert^{\beta - 2k-2} d\mu (z) \right)^{\frac{1}{q}} \times \\
 & & \left( \displaystyle \int_{2^i  r\leq \vert z \vert <2^{i+1} r} \vert z \vert^{\beta - 2k-2} d\mu (z) \right)^{\frac{1}{q^{'}}} \\
 &\leq& \sum\limits^{\infty}_{i=0} (2^{i}r)^{\frac{\beta - 2k-2}{q}}\left(  \displaystyle \int_{2^i  r\leq \vert z \vert <2^{i+1} r} \tau_{x} \vert f (z) \vert^{q} d\mu (z) \right)^{\frac{1}{q}} \times \\
 & & \left( (2^{i}r)^{\beta - 2k-2} \mu (B(0,2^{i+1}r)) \right)^{\frac{1}{q^{'}}}.
\end{eqnarray*}
As
$$ \left(  \displaystyle \int_{2^i  r\leq \vert z \vert <2^{i+1} r} \tau_{x} \vert f (z) \vert^{q} d\mu (z) \right)^{\frac{1}{q}}\leq \left\Vert  \left(  \displaystyle \int_{B(0,2^{i+1}r)} \tau_{x} \vert f (z) \vert^{q} d\mu (z) \right)^{\frac{1}{q}} \right\Vert_{\infty} ,$$
it follows that
\begin{eqnarray*}
     \vert B(x) \vert
&\leq& \sum\limits^{\infty}_{i=0} (2^{i}r)^{\beta - 2k-2} \mu (B(0,2^{i+1}r))^{\frac{1}{q^{'}}} \mu (B(0,2^{i+1}r))^{-\frac{1}{\alpha} + \frac{1}{q}} \\
 &\times& \mu (B(0,2^{i+1}r))^{\frac{1}{\alpha} - \frac{1}{q}} \left\Vert  \left(  \displaystyle \int_{B(0,2^{i+1}r)} \tau_{x} \vert f (z) \vert^{q} d\mu (z) \right)^{\frac{1}{q}} \right\Vert_{\infty} \\
&\leq& \Vert f \Vert_{q,\infty, \alpha} \sum\limits^{\infty}_{i=0} (2^{i}r)^{\beta - 2k-2} \mu (B(0,2^{i+1}r))^{1-\frac{1}{\alpha}}.
\end{eqnarray*}
Since $$\mu (B(0,2^{i+1}r))^{1-\frac{1}{\alpha}} = d^{1-\frac{1}{\alpha}}_{k} (2^{i+1}r)^{(2k+2)(1-\frac{1}{\alpha})}, $$
we have
\begin{eqnarray*}
& & \sum\limits^{\infty}_{i=0} (2^{i}r)^{\beta - 2k-2} \mu (B(0,2^{i+1}r))^{1-\frac{1}{\alpha}}\\
& =  & d^{1-\frac{1}{\alpha}}_{k}2^{(2k+2)(1-\frac{1}{\alpha})} r^{\beta -\frac{2k+2}{\alpha}} \sum\limits^{\infty}_{i=0} (2^{i})^{\beta - \frac{2k+2}{\alpha}}.
\end{eqnarray*}
In addition, $\sum\limits^{\infty}_{i=0} (2^{i})^{\beta - \frac{2k+2}{\alpha}}< \infty $. So
\begin{eqnarray}\label{rel 5}
\vert B(x) \vert \leq C r^{\beta -\frac{2k+2}{\alpha}} \Vert f \Vert_{q, \infty, \alpha}.
\end{eqnarray}
It follows from (\ref{rel 4}) and (\ref{rel 5}) that
$$ \vert I_{\beta} f(x) \vert \leq C ( r^{\beta}M f(x)  + r^{\beta -\frac{2k+2}{\alpha}} \Vert f \Vert_{q,\infty, \alpha}  )$$
for all $r>0$. Thus, taking $r= \left( \frac{\Vert f \Vert_{q, \infty, \alpha} }{M f(x)}   \right)^{\frac{\alpha}{2k+2}}$, we get
\begin{eqnarray} \label{rel 7}
\vert I_{\beta} f(x) \vert \leq C (M f(x))^{1- \frac{\alpha\beta}{2k+2}}\Vert f \Vert^{\frac{\alpha\beta}{2k+2}}_{q,\infty, \alpha}
\end{eqnarray}
for all $ x \in \mathbb{R}.$\\
From (\ref{rel 7}), we have
\begin{eqnarray*}
_{r}\Vert I_{\beta} f \Vert_{\bar{q},\bar{p}} &\leq& C \Vert f \Vert^{\frac{\alpha\beta}{2k+2}}_{q,\infty, \alpha} \:_{r}\Vert (M f)^{1- \frac{\alpha\beta}{2k+2}} \Vert_{\bar{q},\bar{p}} \\
 &\leq& C \Vert f \Vert^{\frac{\alpha\beta}{2k+2}}_{q, \infty, \alpha} \:_{r}\Vert M f \Vert^{1- \frac{\alpha\beta}{2k+2}}_{\bar{q}(1- \frac{\alpha\beta}{2k+2}),\bar{p}(1- \frac{\alpha\beta}{2k+2})} \\&\leq& C \Vert f \Vert^{\frac{\alpha\beta}{2k+2}}_{q, \infty,\alpha} \:_{r}\Vert M f \Vert^{1- \frac{\alpha\beta}{2k+2}}_{q,p}.
\end{eqnarray*}
Then, multiplying both sides of previous inequality by
  $\mu (B(0,r))^{\frac{1}{\alpha^{*}} - \frac{1}{\bar{q}} - \frac{1}{\bar{p}}}$, we get
\begin{eqnarray*}
& & \mu(B(0,r))^{\frac{1}{\alpha^{*}} - \frac{1}{\bar{q}} - \frac{1}{\bar{p}}} \:_{r}\Vert I_{\beta} f \Vert_{\bar{q},\bar{p}} \\
 &\leq&  C \Vert f \Vert^{\frac{\alpha\beta}{2k+2}}_{q,\infty, \alpha} (\mu (B(0,r)))^{\frac{1}{\alpha^{*}} - \frac{1}{\bar{q}} - \frac{1}{\bar{p}}} \:_{r}\Vert M f \Vert^{1- \frac{\alpha\beta}{2k+2}}_{q,p} \\
&\leq&  C \Vert f \Vert^{\frac{\alpha\beta}{2k+2}}_{q,\infty, \alpha} \mu (B(0,r))^{(\frac{1}{\alpha} - \frac{1}{q} - \frac{1}{p})(1- \frac{\alpha\beta}{2k+2} )} \:_{r}\Vert M f \Vert^{1- \frac{\alpha\beta}{2k+2}}_{q,p} \\
&\leq&  C \Vert f \Vert^{\frac{\alpha\beta}{2k+2}}_{q,\infty,\alpha} [\mu (B(0,r))^{\frac{1}{\alpha} - \frac{1}{q} - \frac{1}{p}} \:_{r}\Vert M f \Vert_{q,p} ]^{1- \frac{\alpha\beta}{2k+2}}.
\end{eqnarray*}
Hence,
$$\Vert I_{\beta}f \Vert_{\bar{q},\bar{p},\alpha^{*}} \leq C  \Vert f \Vert^{\frac{\alpha\beta}{2k+2}}_{q,\infty,\alpha}\Vert Mf \Vert^{1-\frac{\alpha\beta}{2k+2}}_{q,p,\alpha}$$
and by Proposition \ref{maxi},
\begin{eqnarray}\label{rel 6}
\Vert I_{\beta}f \Vert_{\bar{q},\bar{p},\alpha^{*}} \leq C  \Vert f \Vert^{\frac{\alpha\beta}{2k+2}}_{q, \infty,\alpha}\Vert f \Vert^{1-\frac{\alpha\beta}{2k+2}}_{q,p,\alpha}.
\end{eqnarray}
We end the proof by combining Inequality (\ref{rel 6}) with the fact that
$$\Vert f \Vert_{q,\infty,\alpha} \leq C \Vert f \Vert_{q,p,\alpha},$$
where $C$ is a constant not depending on $f$.
\end{proof}

\begin{proof}[Proof of Theorem \ref{theo 2}]
Let  $ 1 < \alpha < p < \infty$, $0<\beta< \frac{2k+2}{\alpha}$ and $f \in (L^{1}, L^{p})^{\alpha} (\mu)$.
We assume that  $f\ne 0$ $\mu$-almost everywhere, otherwise the result is obvious.
From (\ref{rel 7}) we have
\begin{eqnarray*} \label{rel 8}
\Vert I_{\beta}f \Vert_{(L^{\bar{q},\infty}, L^{\bar{p}})^{\alpha^{*}} (\mu) } \leq C \Vert f \Vert^{\frac{\alpha\beta}{2k+2}}_{1,\infty,\alpha} \Vert (M f)^{1- \frac{\alpha\beta}{2k+2}} \Vert_{(L^{\bar{q},\infty}, L^{\bar{p}})^{\alpha^{*}} (\mu) }.
\end{eqnarray*}
Now, let $r>0$. Then
\begin{eqnarray*}
& &  \Vert \Vert (M f)^{1- \frac{\alpha\beta}{2k+2}} \tau^{\frac{1}{\bar{q}}}_{(- \cdot)}\chi_{B_r} \Vert_{L^{\bar{q},\infty}(\mu)}   \Vert_{\bar{p}} \\
 &= &  \left\Vert \left\Vert \left[(Mf) \tau^{\frac{1}{\bar{q} (1- \frac{\alpha\beta}{2k+2})}}_{(- \cdot)}\chi_{B_r} \right]^{1- \frac{\alpha\beta}{2k+2}} \right\Vert_{L^{\bar{q},\infty}(\mu)}  \right \Vert_{\bar{p}} \\
  &=& \left\Vert \left\Vert (Mf) \tau^{\frac{1}{\bar{q} (1- \frac{\alpha\beta}{2k+2})}}_{(- \cdot)}\chi_{B_r}  \right\Vert^{1- \frac{\alpha\beta}{2k+2}}_{L^{\bar{q}(1- \frac{\alpha\beta}{2k+2}), \infty}(\mu)}   \right\Vert_{\bar{p}} \\
  &=& \left\Vert \left\Vert (Mf) \tau_{(- \cdot)}\chi_{B_r}  \right\Vert_{L^{1,\infty}(\mu)}   \right\Vert^{1- \frac{\alpha\beta}{2k+2}}_{\bar{p}(1- \frac{\alpha\beta}{2k+2})} \\
    &=& \left\Vert \left\Vert (Mf) \tau_{(- \cdot)}\chi_{B_r}  \right\Vert_{L^{1, \infty}(\mu)}   \right\Vert^{1- \frac{\alpha\beta}{2k+2}}_{p}.
\end{eqnarray*}
Since  $\frac{1}{\alpha^{*}} - \frac{1}{\bar{q}} - \frac{1}{\bar{p}} = (\frac{1}{\alpha} - 1 - \frac{1}{p})(1- \frac{\alpha\beta}{2k+2})$ then
$$\mu (B(0,r))^{\frac{1}{\alpha^{*}} - \frac{1}{\bar{q}} - \frac{1}{\bar{p}}} \Vert \Vert (M f)^{1- \frac{\alpha\beta}{2k+2}} \tau^{\frac{1}{\bar{q}}}_{(- \cdot)}\chi_{B_r} \Vert_{L^{\bar{q},\infty}(\mu)}   \Vert_{\bar{p}}$$ $$ =  \mu (B(0,r))^{(\frac{1}{\alpha} - 1 - \frac{1}{p})(1- \frac{\alpha\beta}{2k+2} )}\left\Vert \left\Vert (Mf) \tau_{(- \cdot)}\chi_{B_r}  \right\Vert_{L^{1,\infty}(\mu)}   \right\Vert^{1- \frac{\alpha\beta}{2k+2}}_{p} $$ $$  = \left[ \mu (B(0,r))^{\frac{1}{\alpha} - 1 - \frac{1}{p}}\left\Vert \left\Vert (Mf) \tau_{(- \cdot)}\chi_{B_r}  \right\Vert_{L^{1,\infty}(\mu)} \right\Vert_{p} \right]^{1- \frac{\alpha\beta}{2k+2}}. $$
Thus,
$$\Vert (M f)^{1- \frac{\alpha\beta}{2k+2}} \Vert_{(L^{\bar{q}, \infty}, L^{\bar{p}})^{\alpha^{*}} (\mu) } =  \Vert Mf \Vert^{1- \frac{\alpha\beta}{2k+2}}_{(L^{1, \infty}, L^{p})^{\alpha} (\mu)}$$
and
\begin{eqnarray*}
\Vert I_{\beta}f \Vert_{(L^{\bar{q},\infty}, L^{\bar{p}})^{\alpha^{*}} (\mu) } \leq C \Vert f \Vert^{\frac{\alpha\beta}{2k+2}}_{1,\infty,\alpha} \Vert Mf \Vert^{1- \frac{\alpha\beta}{2k+2}}_{(L^{1,\infty}, L^{p})^{\alpha} (\mu)}.
\end{eqnarray*}
By applying Proposition \ref{weakmaxi}, the previous inequality becomes
\begin{eqnarray} \label{rel 9}
\Vert I_{\beta}f \Vert_{(L^{\bar{q},\infty}, L^{\bar{p}})^{\alpha^{*}} (\mu_{k}) } \leq C \Vert f \Vert^{\frac{\alpha\beta}{2k+2}}_{1,\infty,\alpha} \Vert f \Vert^{1-\frac{\alpha\beta}{2k+2}}_{1,p,\alpha} .
\end{eqnarray}
We end the proof by combining Inequality (\ref{rel 9}) with the fact that
$$\Vert f \Vert_{1,\infty,\alpha} \leq C \Vert f \Vert_{1,p,\alpha},$$
where $C$ is a constant not depending on $f$.
\end{proof}
\begin{proof}[Proof of Corollary \ref{cor 1}]
Let  $1 < q \leq \alpha \leq p < \infty$, $0<\beta< \frac{2k+2}{\alpha}$ and $f \in (L^{q}, L^{p})^{\alpha} (\mu)$.\\
Let $x \in \mathbb{R}$ and  $r>0$. We have
\begin{eqnarray*}
& & \frac{1}{\mu(B(0,r))^{1-\frac{\beta}{2k+2}}} \displaystyle \int_{B(0,r)} \tau_{x} \vert f \vert (z) d\mu (z) \\ &=&  \displaystyle \int_{B(0,r)} \tau_{x} \vert f \vert (z) \frac{1}{\mu(B(0,r))^{1-\frac{\beta}{2k+2}}} d\mu (z) \\
&=& \displaystyle \int_{B(0,r)} \tau_{x} \vert f \vert (z) \frac{1}{(d_{k}r^{2k+2})^{1-\frac{\beta}{2k+2}}} d\mu (z) \\
&=& d_{k}^{\frac{\beta}{2k+2}-1}\displaystyle \int_{B(0,r)} \tau_{x} \vert f \vert (z) \frac{1}{r^{2k+2-\beta}} d\mu (z) \\
&\leq& d_{k}^{\frac{\beta}{2k+2}-1}\displaystyle \int_{B(0,r)} \tau_{x} \vert f \vert (z) \vert z \vert^{\beta -2k-2} d\mu (z)\\
&\leq& d_{k}^{\frac{\beta}{2k+2}-1}\displaystyle \int_{\mathbb{R}} \tau_{x} \vert f \vert (z) \vert z \vert^{\beta -2k-2} d\mu (z).
\end{eqnarray*}
Hence,
\begin{equation}
M_{\beta}f(x) \leq d^{\frac{\beta}{2k+2}-1}_{k} I_{\beta} \vert f \vert(x). \label{ef}
\end{equation}
We end the proof by taking the norm $\Vert \cdot \Vert_{\bar{q},\bar{p},\alpha^{*}}$ on the both sides of (\ref{ef}) and by applying Theorem \ref{theo 01}.
\end{proof}
\begin{proof}[Proof of Corollary \ref{cor 2}]
The result follows from Inequality (\ref{ef}) and Theorem \ref{theo 2}.
\end{proof}

\end{document}